\documentclass[11pt,reqno]{amsart}

\usepackage{amsmath}
\usepackage{amssymb}
\usepackage{amsthm}

\newtheorem{theorem}{Theorem}[section]
\newtheorem{prop}[theorem]{Proposition}
\newtheorem{lem}[theorem]{Lemma}
\newtheorem*{cor}{Corollary}
\theoremstyle{definition}
\newtheorem{defn}[theorem]{Definition}

\theoremstyle{remark}
\newtheorem*{rem}{Remark}
\numberwithin{equation}{section}

\hyphenation{non-commutative}
\hyphenation{non-degenerate}
\hyphenation{quasi-triangular}

\begin{document}

\title[Locally compact quantum groupoids]
{A class of {$C^*$-algebraic} locally compact quantum groupoids \\
Part I. Motivation and definition}

\author{Byung-Jay Kahng}
\date{}
\address{Department of Mathematics and Statistics\\ Canisius College\\
Buffalo, NY 14208, USA}
\email{kahngb@canisius.edu}

\author{Alfons Van Daele}
\date{}
\address{Department of Mathematics\\ University of Leuven\\ Celestijnenlaan 200B\\ 
B-3001 Heverlee, BELGIUM}
\email{Alfons.VanDaele@wis.kuleuven.be}


\begin{abstract}
In this series of papers, we develop the theory of a class of locally compact quantum 
groupoids, which is motivated by the purely algebraic notion of weak multiplier 
Hopf algebras.  In this Part~I, we provide motivation and formulate the definition 
in the $C^*$-algebra framework.  Existence of a certain canonical idempotent element 
is required and it plays a fundamental role, including the establishment of the coassociativity 
of the comultiplication.  This class contains locally compact quantum groups as a subclass.
\end{abstract}
\maketitle

{\sc Introduction.}

Let $G$ be a {\em groupoid\/} over the set of units $G^{(0)}$, together with the maps 
$s:G\to G^{(0)}$ and $t:G\to G^{(0)}$ (the ``source map'' and the ``target map'').  This means 
that there is a set of ``composable pairs'' $G^{(2)}=\bigl\{(p,q)\in G\times G: s(p)=t(q)\bigr\}$, 
on which the product $pq$ in $G$ is defined.  This product is assumed to be associative, 
in an appropriate sense.  The set of units, $G^{(0)}$, may be naturally regarded as a subset 
of $G$.  There exists also the inverse map $p\mapsto p^{-1}$ (so that $(p^{-1})^{-1}=p$), 
for which we have $s(p^{-1})=t(p)$, $t(p^{-1})=s(p)$, and satisfying some natural conditions. 
When $G^{(0)}$ is a single element set, then $G$ becomes an ordinary group.  For a more detailed 
discussion on the definition and the basic theory of groupoids, refer to \cite{Brbook},  
\cite{Higbook}.  The groupoid notion can be further extended to incorporate locally compact 
topology, which is the notion of a {\em locally compact groupoid\/}.  See \cite{Renbook}, 
\cite{Patbook}.

Our main goal is to develop an appropriate notion of a locally compact quantum groupoid, which 
would generalize the category of locally compact quantum groups.  At the purely algebraic level, 
attempts in this direction have been around for some time, with works by Maltsiniotis \cite{MalGQ} 
(when the unit space is commutative), Lu \cite{LuAlgebroid} (``Hopf algebroids''), Xu \cite
{XuQGDeformation} (``quantum universal enveloping algebroids''), for instance.  In a more 
operator algebraic setting, there is a work by Yamanouchi \cite{Yagroupoid} (``generalized 
Kac algebras'').   Among these, our approach below is closely related to the notion of {\em weak 
Hopf algebras\/} (by B{\" o}hm, Nill, Szlach{\' a}nyi \cite{BNSwha1}, \cite{BSwha2}), also referred to 
as {\em finite quantum groupoids\/}.  See also \cite{NVfqg}, \cite{Valfqg}.

Suppose $G$ is a groupoid.  For the time being, let us disregard any topology on $G$ and 
consider $A=K(G)$, the set of all complex-valued functions on $G$ having finite support.  Under 
the pointwise multiplication, $A$ becomes a commutative algebra.  In particular, if $G$ 
is a finite groupoid, the algebra $A$ becomes unital, whose unit is denoted by 1.  There 
exists a map $\Delta:A\to A\otimes A$ (the ``comultiplication''), defined by
$$
(\Delta f)(p,q):=\left\{\begin{matrix}f(pq) & {\text { if $s(p)=t(q)$}} \\ 0 & {\text { otherwise}}
\end{matrix}\right.
$$
Here, $\otimes$ denotes the algebraic tensor product.  The ``antipode'' map is given by $S(f)(p)
=f(p^{-1})$.  In this case, it can be shown that $(A,\Delta)$ is given a structure of a weak 
Hopf algebra.  

A weak Hopf algebra is in general noncommutative, but includes the above example as 
a fundamental case.  Recently, this notion has been further generalized to include the case 
of non-finite groupoids (so the algebra $A$ is non-unital), which is the notion of a {\em weak 
multiplier Hopf algebra\/} developed by one of us (Van~Daele) and Wang.  Here, we have 
$\Delta:A\to M(A\otimes A)$, into the multiplier algebra of $A\otimes A$.  For fuller 
description, refer to \cite{VDWangwha0}, \cite{VDWangwha1}.

We are interested in developing its topological version, in the $C^*$-algebra framework, which 
would give us a {\em locally compact quantum groupoid\/}.  But first, let us review a little about 
locally compact groupoids.  In the standard theory on locally compact groupoids (see  \cite{Renbook}, 
\cite{Patbook}), in addition to requiring that the topology on $G$ is compatible with the groupoid 
structure on $G$ (including the unit space, the source and target maps, the multiplication 
and the inverse map), one also requires the existence of a suitable ``left Haar system''.

Briefly speaking, a left Haar system for a locally compact groupoid $G$ is a family 
$\{\lambda^u\}_{u\in G^{(0)}}$, where each $\lambda^u$ is a positive regular Borel measure 
on $G^u:=t^{-1}\{u\}$, such that 
\begin{itemize}
  \item For each $g\in C_c(G)$, the function $u\mapsto\int_{G^u}g\,d\lambda^u$ belongs to $C_c(G^{(0)})$;
  \item For any $x\in G$ and $f\in C_c(G)$, we have:
\begin{equation}\label{(groupoid_leftinvariance)}
\int_{G^{s(x)}}f(xz)\,d\lambda^{s(x)}(z)=\int_{G^{t(x)}}f(y)d\lambda^{t(x)}(y).
\end{equation}
\end{itemize}
Equation~\eqref{(groupoid_leftinvariance)} is referred to as the {\em left invariance condition\/}. 
When $G$ is a locally compact group, so $G^{(0)}=\{e\}$, this degenerates into the familiar 
left invariance condition for the left Haar measure on $G$.  In the left side of the equation, 
$z\in G^{s(x)}=t^{-1}\bigl\{s(x)\bigr\}$, because $xz$ is valid only when $t(z)=s(x)$.  In the right 
hand side, we can observe that $x\mapsto\int_{G^{t(x)}}f(y)d\lambda^{t(x)}(y)$ is a function 
of $t(x)$, so contained in the ``target algebra''.

Meanwhile, using the inverse map $x\mapsto x^{-1}$, which is a homeomorphism on $G$, 
one can associate to each $\lambda^u$ the measure $\lambda_u$ on $G_u:=s^{-1}\{u\}$.  
That is, $\lambda_u(V)=\lambda^u(V^{-1})$, for any Borel subset $V$ of $G_u$.  In this way, 
one obtains a ``right Haar system'' $\{\lambda_u\}$.  It would satisfy the {\em right invariance 
condition\/}, analogous to Equation~\eqref{(groupoid_leftinvariance)}.

Since the unit space is no longer a single element set, one also needs to consider a measure, 
$\nu$, on $G^{(0)}$.  However, for the theory to be properly developed, one needs to require 
what is called a ``quasi-invariance condition'' on $\nu$.  To explain, note that the measure $\nu$ 
on $G^{(0)}$ and the left Haar system determine a measure on $G$, which in turn will determine 
a functional (weight) $\varphi$, as follows:
\begin{equation}\label{(groupoid_leftHaar)}
\varphi(f)=\int_{G^{(0)}}\,d\nu(u)\int_{G^u}f(x)\,d\lambda^u(x).
\end{equation}
Similarly, the measure $\nu$ and the right Haar system will determine another measure on $G$, 
which in turn will produce a weight $\psi$.  The measure $\nu$ is called {\em quasi-invariant\/}, if 
the two resulting measures on $G$, namely $\varphi$ and $\psi$,  are mutually absolutely continuous.  
All these are standard results.  See \cite{Renbook}, \cite{Patbook} for more details.  

The above review on locally compact groupoids means that the data we expect to have for a general 
locally compact quantum groupoid are $(A,\Delta,B,\nu,S,\varphi)$, where $A$ is a $C^*$-algebra, 
taking the role of $C_0(G)$; $\Delta$ is the comultiplication on $A$, corresponding to the 
multiplication on $G$; $B$ is a sub-$C^*$-algebra of $M(A)$, taking the role of the unit space 
$G^{(0)}$; $\nu$ is a suitable weight on $B$; $S$ is the antipode map, corresponding to 
the inverse map on $G$; $\varphi$ is the left Haar weight on $A$, incorporating the information 
about the left Haar system and the left invariance. 

Or, equivalently, we may consider $(A,\Delta,B,\nu,\varphi,\psi)$, where $\psi$ is the right 
Haar weight.  The weights $\varphi$ and $\psi$ would be related via $S$, so we expect to recover 
the antipode $S$ from $\varphi$ and $\psi$.  This approach is analogous to what was done 
in \cite{KuVa} and \cite{KuVavN} in the theory of locally compact quantum groups.  In fact, 
in a separate paper by the authors \cite{BJKVD_LSthm}, we were able to carry out this exact idea 
in the purely algebraic setting of weak multiplier Hopf algebras.

In the general theory of weak multiplier Hopf algebras, a fundamental role is played by a certain 
canonical idempotent element $E$, which is just $\Delta(1)$ if $A$ has identity.  To get a sense 
of this, consider the special case $A=K(G)$ corresponding to a groupoid $G$, and consider
$$
E(p,q):=\left\{\begin{matrix}1 & {\text { if $s(p)=t(q)$}} \\ 0 & {\text { otherwise}}
\end{matrix}\right..
$$
Observe that $E(p,q)=E\bigl(s(p),t(q)\bigr)$.  If we write $B$ to be the subalgebra of $M(A)$ 
given by the pull-back of the algebra $K(G^{(0)})$ via the source map $s:G\to G^{(0)}$, and 
$C\subseteq M(A)$ the pull-back of the algebra $K(G^{(0)})$ via the target map $t:G\to G^{(0)}$, 
we will have $E\in M(B\otimes C)\subseteq M(A\otimes A)$.  It turns out that $E\in M(B\otimes C)$ 
satisfies the conditions to be regarded as what is known as a {\em separability idempotent\/}. 
See \cite{BJKVD_SepId}.

Meanwhile, even when $A$ is unital, we have $\Delta(1)=E\ne1\otimes1$, unless $G$ is a group.  
This means that the comultiplication map $\Delta$ cannot be a non-degenerate map.  This is 
significant, because in the quantum group theory, the non-degeneracy of $\Delta:A\to M(A\otimes A)$ 
allows one to extend $\Delta$ to the multiplier level.  Without the non-degeneracy of $\Delta$, 
so no straightforward mechanism to extend $\Delta$ to $M(A)$, it is not obvious how to make 
sense of the coassociativity condition: $(\Delta\otimes\operatorname{id})\Delta
=(\operatorname{id}\otimes\Delta)\Delta$.  Fortunately, using the existence of $E$, one can 
show that $\Delta$ does extend to $M(A)$.  This is how the notion of a {\em weak multiplier 
Hopf algebra\/} is formulated \cite{VDWangwha0}, \cite{VDWangwha1}.

In this paper, by requiring the existence of such an idempotent $E$ in the $C^*$-algebra 
framework and by axiomatizing its properties, we aim to develop a notion of a locally compact 
quantum groupoid that would be a $C^*$-algebraic analogue of a weak multiplier Hopf algebra. 

The theory is further developed in Part~II \cite{BJKVD_qgroupoid2}, where the left/right 
regular representations and the antipode map are constructed and the quasi-invariance 
of $\nu$ is proved.  The resulting class of locally compact quantum groupoids, given by 
the data $(A,\Delta,E,B,\nu,\varphi,\psi)$, contains as a special case all locally compact 
quantum groups (when $B=\mathbb{C}$ and $E=1\otimes1$).  Later (in Part III in our series 
\cite{BJKVD_qgroupoid3}), we aim to show that this class forms a self-dual category, satisfying 
a generalized Pontryagin-type duality.

However, in general, one cannot always expect such an idempotent element $E$ to exist as 
a bounded operator.  The existence of $E$ restricts the possible choice of $(B,\nu)$, which 
means that our class of locally compact quantum groups would not be fully general.

In the purely algebraic setting, a more general theory is currently being developed, namely, the notion 
of {\em multiplier Hopf algebroids}, by Timmermann and the second-named author (Van Daele) 
\cite{TimVD}.  Meanwhile, in the von Neumann algebraic setting, there exists a closely related 
notion of {\em measured quantum groupoids\/}, by Lesieur \cite{LesSMF} and Enock \cite{EnSMF}.

In the measured quantum groupoid approach, the comultiplication is defined as a map from 
a von Neumann algebra $M$ into $M\,{\substack{\ast\\N}}\,M$, a ``fiber product'' of $M$ with 
itself.  Unfortunately, the notion of a fiber product of von Neumann algebras (over a ``relative 
tensor product'' of Hilbert spaces) is highly technical, and moreover, there is no direct analogue 
of such a notion in the $C^*$-algebra framework.

Such being the current status of the research, the authors feel that it is still quite worthwhile 
to study this (sub)class of $C^*$-algebraic locally compact quantum groupoids based on 
the notion of weak multiplier Hopf algebras, with the comultiplication defined as map from 
$A$ into $M(A\otimes A)$.  While restrictive,  this is precisely the reason our framework 
can be more accessible than the rather technical approaches like the measured quantum 
groupoids or the Hopf algebroids.  In addition, the theory itself is rich in nature, being 
a self-dual category that naturally extends the theory of locally compact quantum groups. 
These aspects will be further developed in the upcoming papers in the series (\cite{BJKVD_qgroupoid2}, 
\cite{BJKVD_qgroupoid3}). 

Let us also point out the fact that until now, there has not been any $C^*$-algebraic theory developed 
for non-unital (so non-compact) cases.  This is done in the present paper.  Our framework includes 
some recent nontrivial examples (for instance, \cite{DeComm_LinkingQG}, \cite{DCTT_partialCQG}, 
\cite{DC_C*partialCQG}), and we expect to construct other examples in the upcoming years.
As the more general $C^*$-algebraic theory based on Hopf algebroids is still some time away 
from being developed, our proposed framework here will provide us with a nice bridge.

The current paper is organized as follows.  In Section~1, we gather some basic results concerning 
$C^*$-algebra weights.  The purpose here is to set the terminologies and notations to be used 
in later sections as well as in Parts~II and III.  We skip most of the proofs, giving references 
instead.

In Section~2, we review the notion of a ``separability idempotent'' in the $C^*$-algebra setting. 
As noted above, the existence of such an element plays an important role in our approach.  More 
details can be found in a separate paper by the authors \cite{BJKVD_SepId}.

After these preliminaries, definitions and immediate properties of the comultiplication $\Delta$ 
and the canonical idempotent $E$ are given in Section~3.  In particular, we show how the existence 
of $E$ allows us to extend the comultiplication map to the multiplier algebra level, thereby enabling 
us to make better sense of the coassociativity condition.

Definitions for the left and the right Haar weights are given in Section~4.  We then give the main 
definition of our locally compact quantum groupoid.  In Section~5, we give some examples. 
As the proper theory is yet to be developed (in Parts~II and III), we included only the basic 
examples here.

The series continues in Part~II \cite{BJKVD_qgroupoid2}.  In that paper, we construct from the 
defining axioms the partial isometries $V$ and $W$, which are essentially the right and the left 
regular representations.  We will also construct the antipode map $S$ and its polar decomposition. 
Later in Part~III \cite{BJKVD_qgroupoid3}, we will consider the dual object, which is also a locally 
compact quantum groupoid of our type.  We will then obtain the Pontryagin-type duality result.

\bigskip

{\sc Acknowledgments.}

The current work began during the first named author (Kahng)'s sabbatical leave visit 
to the University of Leuven during 2012/2013.  He is very much grateful to his coauthor 
(Van Daele) and the mathematics department at University of Leuven for their warm support 
and hospitality during his stay.   He also wishes to thank Michel Enock for the hospitality and 
support he received while he was visiting Jussieu.  The discussions on measured quantum 
groupoids, together with his encouraging words, were all very inspiring and helpful. 

\bigskip

\section{Preliminaries on $C^*$-algebra weights}

In this section, we review some basic notations and results concerning the weights on 
$C^*$-algebras, which will be useful later.  For standard terminologies and for a more 
complete treatment on $C^*$-algebra weights, refer to \cite{Cm}, \cite{Str}, \cite{Tk2}. 
There is also a nice survey \cite{KuVaweightC*}, and one may also refer to section~1 of 
\cite{KuVa}.

\subsection{Proper weights}

Let $A$ be a $C^*$-algebra.  A function $\psi:A^+\to[0,\infty]$ is called a weight on $A$, 
if $\psi(x+y)=\psi(x)+\psi(y)$ for all $x,y\in A^+$ and $\psi(\lambda x)=\lambda\psi(x)$ 
for all $x\in A^+$ and $\lambda\in\mathbb{R}^+$, with the convention that $0\cdot\infty=0$.

Given a weight $\psi$ on $A$, we can consider the following subspaces:
\begin{align}
{\mathfrak M}_\psi^+:&=\bigl\{a\in A^+:\psi(a)<\infty\bigr\} \notag \\
{\mathfrak N}_\psi:&=\bigl\{a\in A:\psi(a^*a)<\infty\bigr\} \notag \\
{\mathfrak M}_\psi:&={\mathfrak N}_\psi^*{\mathfrak N}_\psi
=\operatorname{span}\{y^*x:x,y\in{\mathfrak N}_\psi\} \notag
\end{align}
It is easy to see that ${\mathfrak N}_\psi$ is a left ideal in $M(A)$, and that 
${\mathfrak M}_\psi\subseteq{\mathfrak N}_\psi$.  Moreover, ${\mathfrak M}_\psi$ 
is a ${}^*$-subalgebra of $A$ spanned by ${\mathfrak M}_\psi^+$, which is its positive 
part.  Extending $\psi$ on ${\mathfrak M}_\psi^+$, we naturally obtain a map from 
${\mathfrak M}_\psi$ into $\mathbb{C}$, which we will still denote by $\psi$.

A weight $\psi$ is said to be faithful, if $\psi(a)=0$, $a\in A^+$, implies $a=0$.  We say 
a weight $\psi$ is densely-defined (or semi-finite), if ${\mathfrak N}_\psi$ is dense in $A$. 
The weights we will be considering below are ``proper weights'': A proper weight on a 
$C^*$-algebra is a non-zero, densely-defined weight, which is also lower semi-continuous. 

It is useful to consider the following sets, first considered by Combes \cite{Cm}:
\begin{align}
{\mathcal F}_\psi&:=\bigl\{\omega\in A^*_+:\omega(x)\le\psi(x),\forall x\in A^+\bigr\}, \notag \\
{\mathcal G}_\psi&:=\bigl\{\alpha\omega:\omega\in{\mathcal F}_\psi,{\text { for $\alpha\in(0,1)$}}\bigr\}.
\notag
\end{align}
Here $A^*$ denotes the norm dual of $A$.  Note that on ${\mathcal F}_\psi$, one can give a natural 
order inherited from $A^*_+$.  Then ${\mathcal G}_\psi$ is a directed subset of ${\mathcal F}_\psi$. 
Because of this, ${\mathcal G}_\psi$ is often used as an index set (of a net).  For a proper weight, 
being lower semi-continuous, we would have: $\psi(x)=\lim_{\omega\in{\mathcal G}_\psi}
\bigl(\omega(x)\bigr)$, for $x\in A^+$.

One can associate to $\psi$ a GNS-construction $({\mathcal H}_\psi,\pi_\psi,\Lambda_\psi)$. Here, 
${\mathcal H}_\psi$ is a Hilbert space, $\Lambda_\psi:{\mathfrak N}_\psi\to{\mathcal H}_\psi$ is 
a linear map such that $\Lambda_\psi({\mathfrak N}_\psi)$ is dense in ${\mathcal H}_\psi$, and 
$\bigl\langle\Lambda_\psi(a),\Lambda_\psi(b)\bigr\rangle=\psi(b^*a)$ for $a,b\in{\mathfrak N}_\psi$. 
Since $\psi$ is assumed to be lower semi-continuous, we further have that $\Lambda_\psi$ is closed. 
And, $\pi_\psi$ is the GNS representation of $A$ on ${\mathcal H}_\psi$, given by $\pi_\psi(a)
\Lambda_\psi(b)=\Lambda_\psi(ab)$ for $a\in A$, $b\in{\mathfrak N}_\psi$.  The GNS representation 
is non-degenerate, and the GNS construction is unique up to a unitary transformation.

Every $\omega\in A^*$ has a unique extension to the level of the multiplier algebra $M(A)$, which 
we may still denote by $\omega$.  From this fact, it follows easily that any proper weight $\psi$ 
on $A$ has a natural extension to the weight $\bar{\psi}$ on $M(A)$.  For convenience, we will use 
the notations $\overline{\mathfrak N}_\psi={\mathfrak N}_{\bar{\psi}}$ and $\overline{\mathfrak M}_\psi
={\mathfrak M}_{\bar{\psi}}$.  Also, the GNS construction for a proper weight $\psi$ on $A$ has 
a natural extension to the GNS construction for $\bar{\psi}$ on $M(A)$, with ${\mathcal H}_{\bar{\psi}}
={\mathcal H}_\psi$.  Often, we will just write $\psi$ for $\bar{\psi}$.

\subsection{KMS weights}

Of particular interest is the notion of KMS weights (see \cite{Cm}, and Chapter~VIII of \cite{Tk2}). 
The notion as defined below (due to Kustermans \cite{KuKMS}) is slightly different from the original 
one given by Combes, but equivalent.

\begin{defn}\label{KMSweight}
Let $\psi$ be a faithful, semi-finite, lower semi-continuous weight.  It is called a {\em KMS weight\/}, 
if there exists a norm-continuous one-parameter group of automorphisms $(\sigma_t)_{t\in\mathbb{R}}$ 
of $A$ such that $\psi\circ\sigma_t=\psi$ for all $t\in\mathbb{R}$, and 
$$
\psi(a^*a)=\psi\bigl(\sigma_{i/2}(a)\sigma_{i/2}(a)^*\bigr) {\text { for all $a\in{\mathcal D}(\sigma_{i/2})$.}}
$$
Here, $\sigma_{i/2}$ is the analytic generator of the one-parameter group $(\sigma_t)$ at $z=i/2$, 
and ${\mathcal D}(\sigma_{i/2})$ is its domain.  In general, with the domain properly defined, it is 
known that $\sigma_z$, $z\in\mathbb{C}$, is a closed, densely-defined map.
\end{defn}

The one-parameter group $(\sigma_t)$ is called the ``modular automorphism group'' for $\psi$. 
It is uniquely determined because $\psi$ is faithful.  In the special case when $\psi$ is a trace, 
that is, $\psi(a^*a)=\psi(aa^*)$ for $a\in{\mathfrak N}_\psi$, it is clear that $\psi$ is KMS, 
with the modular automorphism group being trivial ($\sigma\equiv\operatorname{Id}$).  In a sense, 
the KMS condition provides somewhat of a control over the non-commutativity of $A$.

Basic properties of KMS weights can be found in \cite{Tk2}, \cite{KuKMS}, \cite{KuVaweightC*}. 
In particular, there exists a (unique) anti-unitary operator $J$ (the ``modular conjugation'') 
on ${\mathcal H}_\psi$ such that $J\Lambda_\psi(x)=\Lambda_\psi\bigl(\sigma_{i/2}(x)^*\bigr)$, 
for $x\in{\mathfrak N}_\psi\cap{\mathcal D}(\sigma_{i/2})$.  There exists also a strictly positive 
operator $\nabla$ (the ``modular operator'') on ${\mathcal H}_\psi$ such that 
$\nabla^{it}\Lambda_\psi(a)=\Lambda_\psi\bigl(\sigma_t(a)\bigr)$, for $a\in{\mathfrak N}_{\psi}$ 
and $t\in\mathbb{R}$.  See below for a standard result, which can be easily extended to 
elements in the multiplier algebra:

\begin{lem}\label{lemKMS}
Let $\psi$ be a KMS weight on a $C^*$-algebra $A$, with GNS representation $({\mathcal H}_\psi,
\pi_\psi,\Lambda_\psi)$.  Then we have:
\begin{enumerate}
  \item Let $a\in{\mathcal D}(\sigma_{i/2})$ and $x\in{\mathfrak N}_\psi$.  Then $xa\in{\mathfrak N}_\psi$ 
and $\Lambda_\psi(xa)=J\pi_\psi\bigl(\sigma_{i/2}(a)\bigr)^*J\Lambda_\psi(x)$.
  \item Let $a\in{\mathcal D}(\sigma_{-i})$ and $x\in{\mathfrak M}_\psi$.  Then $ax, x\sigma_{-i}(a)
\in{\mathfrak M}_\psi$ and $\psi(ax)=\psi\bigl(x\sigma_{-i}(a)\bigr)$.
  \item Let $x\in{\mathfrak N}_\psi\cap{\mathfrak N}_\psi^*$ and $a\in{\mathfrak N}_\psi^*\cap 
{\mathcal D}(\sigma_{-i})$ be such that $\sigma_{-i}(a)\in{\mathfrak N}_\psi$.  Then $\psi(ax)
=\psi\bigl(x\sigma_{-i}(a)\bigr)$.
\end{enumerate}
\end{lem}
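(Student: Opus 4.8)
The plan is to prove the three items in order, using standard facts about the modular automorphism group $(\sigma_t)$, the modular conjugation $J$, and the modular operator $\nabla$, which are recalled just before the statement. For item~(1), the key identity is the relation $J\Lambda_\psi(x) = \Lambda_\psi(\sigma_{i/2}(x)^*)$ on $\mathfrak{N}_\psi\cap\mathcal{D}(\sigma_{i/2})$, together with the fact that $\nabla$ implements $(\sigma_t)$ on $\mathcal{H}_\psi$. First I would verify that $xa\in\mathfrak{N}_\psi$ when $x\in\mathfrak{N}_\psi$ and $a\in\mathcal{D}(\sigma_{i/2})$: since $\mathfrak{N}_\psi$ is a left ideal in $M(A)$ and the KMS condition gives $\psi\bigl((xa)^*(xa)\bigr) = \psi\bigl(a^*(x^*x)a\bigr)$, one bounds this using $\psi(a^* y a) \le \|\sigma_{i/2}(a)\|^2\,\psi(y)$ for $y\in\mathfrak{M}_\psi^+$ — this last estimate is itself a consequence of the KMS identity applied to $y^{1/2}a$. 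Then the formula $\Lambda_\psi(xa) = J\pi_\psi\bigl(\sigma_{i/2}(a)\bigr)^* J\,\Lambda_\psi(x)$ follows by first checking it on a suitable core (elements $x$ that are analytic for $\sigma$), where one can compute directly using $\Lambda_\psi(xa) = \Lambda_\psi\bigl(\sigma_{i/2}\circ\sigma_{-i/2}(x)\,a\bigr)$ and the intertwining properties, then extending by closedness of $\Lambda_\psi$ and a standard approximation argument (smoothing $x$ by $\int \sigma_t(x)\,e^{-nt^2}\,dt$).

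For item~(2), the strategy is to reduce the claimed trace-like identity $\psi(ax) = \psi\bigl(x\sigma_{-i}(a)\bigr)$ to the defining KMS invariance $\psi\circ\sigma_t = \psi$ via analytic continuation. Writing $x = \sum_k y_k^* z_k$ with $y_k, z_k \in \mathfrak{N}_\psi$, so that $ax$ and $x\sigma_{-i}(a)$ both lie in $\mathfrak{M}_\psi$ (using that $ay_k^*$, resp.\ $y_k^*\sigma_{-i}(a)$ after adjusting, stays in $\mathfrak{N}_\psi^*$ by item~(1) applied to adjoints, or by the corresponding right-ideal version), one considers the function $z\mapsto \psi\bigl(\sigma_z(a)\,x\bigr)$ and shows it extends analytically on the strip $-1\le\operatorname{Im} z\le 0$ with the two boundary values related by the $\sigma_t$-invariance of $\psi$ and the cyclic property at the level of bounded functionals $\omega\in\mathcal{F}_\psi$. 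Concretely, for $\omega\in\mathcal{F}_\psi$ one has enough regularity to run the classical KMS-boundary-value argument, and then one passes to the supremum over $\omega\in\mathcal{G}_\psi$ using lower semi-continuity, $\psi(x) = \lim_{\omega\in\mathcal{G}_\psi}\omega(x)$.

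Item~(3) is then obtained from item~(2) by a density/approximation argument: the hypotheses $x\in\mathfrak{N}_\psi\cap\mathfrak{N}_\psi^*$ and $a\in\mathfrak{N}_\psi^*\cap\mathcal{D}(\sigma_{-i})$ with $\sigma_{-i}(a)\in\mathfrak{N}_\psi$ guarantee that $ax\in\mathfrak{M}_\psi$ (as $a^*\in\mathfrak{N}_\psi$ and $x\in\mathfrak{N}_\psi$, so $ax = (a^*)^*x$) and $x\sigma_{-i}(a)\in\mathfrak{M}_\psi$ (as $x^*\in\mathfrak{N}_\psi$ and $\sigma_{-i}(a)\in\mathfrak{N}_\psi$), so both sides make sense; one then approximates $a$ within its domain by the analytic elements $a_n = \frac{n}{\sqrt\pi}\int \sigma_t(a)\,e^{-n^2 t^2}\,dt$, which lie in $\mathcal{D}(\sigma_z)$ for all $z$ and satisfy $a_n\to a$, $\sigma_{-i}(a_n)\to\sigma_{-i}(a)$ in norm and in $\Lambda_\psi$, applies item~(2) to each $a_n$, and passes to the limit using the GNS inner product formula $\psi(b^*c) = \langle\Lambda_\psi(c),\Lambda_\psi(b)\rangle$ to control the convergence of $\psi(a_n x)$ and $\psi(x\sigma_{-i}(a_n))$.

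The main obstacle is the analytic-continuation step in item~(2): one must be careful that the function $z\mapsto\sigma_z(a)x$ is genuinely analytic and bounded on the closed strip — a priori $\sigma_z(a)$ is only defined for $\operatorname{Im} z\in[-1,0]$ and only $\sigma$-weakly continuous up to the boundary — so the interchange of the analytic continuation with the (possibly unbounded) weight $\psi$ has to be justified by first passing to the bounded functionals $\omega\in\mathcal{F}_\psi$, where Combes' theory applies cleanly, and only then taking the monotone limit. All the remaining manipulations — membership in $\mathfrak{N}_\psi$ or $\mathfrak{M}_\psi$, and the approximation arguments — are routine once the core/density facts about analytic elements for $(\sigma_t)$ are in hand.
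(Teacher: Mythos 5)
The paper does not actually prove this lemma: Section~1 explicitly skips proofs, and Lemma~\ref{lemKMS} is quoted as a standard result from the KMS-weight literature (Kustermans' \texttt{funct-an/9704008}, Takesaki~II, Kustermans--Vaes). So there is no in-paper argument to compare against, and your sketch has to stand on its own. For item~(1) your route (establish the bound $\psi(a^*ya)\le\|\sigma_{i/2}(a)\|^2\psi(y)$, verify the formula on analytic elements, extend by closedness of $\Lambda_\psi$) is the standard one and is workable, though the step ``apply the KMS identity to $y^{1/2}a$'' needs $y^{1/2}$ to be regularized first, since $y^{1/2}$ need not lie in ${\mathcal D}(\sigma_{i/2})$. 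For item~(2), the analytic-continuation/Combes-functional argument you propose is the classical route but, as you yourself flag, it is delicate for unbounded weights; it is worth knowing that (2) follows in a few lines from (1): write $x=y^*z$ with $y,z\in{\mathfrak N}_\psi$, note $a^*\in{\mathcal D}(\sigma_{i/2})$ and $\sigma_{-i}(a)\in{\mathcal D}(\sigma_{i/2})$, and compute $\psi(ax)=\langle\Lambda_\psi(z),\Lambda_\psi(ya^*)\rangle =\langle\Lambda_\psi(z),J\pi_\psi(\sigma_{-i/2}(a))J\Lambda_\psi(y)\rangle =\langle\Lambda_\psi(z\sigma_{-i}(a)),\Lambda_\psi(y)\rangle=\psi(x\sigma_{-i}(a))$, using $\sigma_{i/2}(a^*)=\sigma_{-i/2}(a)^*$. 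No boundary-value argument is needed.

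The genuine gap is in item~(3). You propose to approximate $a$ by entire elements $a_n$ and ``apply item~(2) to each $a_n$,'' but item~(2) requires $x\in{\mathfrak M}_\psi$, whereas in (3) the hypothesis is only $x\in{\mathfrak N}_\psi\cap{\mathfrak N}_\psi^*$, and in general ${\mathfrak N}_\psi\cap{\mathfrak N}_\psi^*\not\subseteq{\mathfrak M}_\psi$ (already for $A=C_0(\mathbb{R})$ with $\psi$ the Lebesgue integral, a function in $L^2\cap C_0$ need not be integrable). This is precisely why (3) is stated separately with different hypotheses on both $x$ and $a$; your reduction collapses it back onto a case it does not cover. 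The correct argument is direct: the stated hypotheses give $ax=(a^*)^*x$ and $x\sigma_{-i}(a)=(x^*)^*\sigma_{-i}(a)$ as products of elements of ${\mathfrak N}_\psi^*$ and ${\mathfrak N}_\psi$, hence $\psi(ax)=\langle\Lambda_\psi(x),\Lambda_\psi(a^*)\rangle$ and $\psi\bigl(x\sigma_{-i}(a)\bigr)=\langle\Lambda_\psi(\sigma_{-i}(a)),\Lambda_\psi(x^*)\rangle$, and one equates these two inner products using the closed operator $\Lambda_\psi(y)\mapsto\Lambda_\psi(y^*)$, i.e.\ $J\nabla^{1/2}$, applied to both $x$ and the smoothed $a_n$ (for which $\Lambda_\psi(a_n^*)\to\Lambda_\psi(a^*)$ and $\Lambda_\psi(\sigma_{-i}(a_n))\to\Lambda_\psi(\sigma_{-i}(a))$). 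Your Gaussian regularization is the right tool for the limit, but the identity it must be combined with is this Tomita--Takesaki one, not item~(2).
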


\begin{rem}
It is known that when we lift a KMS weight to the level of von Neumann algebra $\pi_\psi(A)''$ 
in an evident way, we obtain a normal, semi-finite, faithful (``n.s.f.'')~weight $\widetilde{\psi}$. 
See \cite{Tk2}.  In addition, the modular automorphism group $(\sigma^{\widetilde{\psi}}_t)$ for 
the n.s.f.~weight $\widetilde{\psi}$ leaves the $C^*$-algebra $B$ invariant, and the restriction 
of $(\sigma^{\widetilde{\psi}}_t)$ to $B$ coincides with our $(\sigma_t)$.  The operators $J$, 
$\nabla$  above are none other than the restrictions of the corresponding operators arising 
in the standard Tomita--Takesaki theory of von Neumann algebra weights.
\end{rem}

In Tomita--Takesaki theory, a useful role is played by the ``Tomita ${}^*$-algebra''.  In our case, 
consider the n.s.f.~weight $\tilde{\psi}$ on the von Neumann algebra $\pi_\psi(A)''$.  Then the 
Tomita ${}^*$-algebra for $\tilde{\psi}$, denoted by ${\mathcal T}_{\tilde{\psi}}$, is as follows:
$$
{\mathcal T}_{\tilde{\psi}}:=\bigl\{x\in{\mathfrak N}_{\tilde{\psi}}\cap{\mathfrak N}_{\tilde{\psi}}^*:
{\text { $x$ is analytic w.r.t. $\sigma^{\tilde{\psi}}$ and $\sigma^{\tilde{\psi}}_z(x)
\in{\mathfrak N}_{\tilde{\psi}}\cap{\mathfrak N}_{\tilde{\psi}}^*$, $\forall z\in\mathbb{C}$}}\bigr\}.
$$
It is a strongly ${}^*$-dense subalgebra in $\pi_\psi(A)''$.  Refer to the standard textbooks 
on the modular theory \cite{Tk2}, \cite{Str}.  In particular, for any $z\in\mathbb{C}$, we have 
$\sigma^{\tilde{\psi}}_z({\mathcal T}_{\tilde{\psi}})={\mathcal T}_{\tilde{\psi}}$. So, the 
$\sigma^{\tilde{\psi}}_z$ are densely-defined, as well as have dense ranges.  

It is also known that ${\mathcal T}_{\psi}:={\mathcal T}_{\tilde{\psi}}\cap A$ is (norm)-dense 
in $A$, so turns out to be useful as we work with the weight $\psi$ at the $C^*$-algebra level. 
For the $\sigma^{\psi}_z$, $z\in\mathbb{C}$, we have $\sigma^{\psi}_z({\mathcal T}_{\psi})
=\sigma^{\psi}_z({\mathcal T}_{\tilde{\psi}}\cap A)={\mathcal T}_{\tilde{\psi}}\cap A
={\mathcal T}_{\psi}$.  So, as above, the $\sigma^{\psi}_z$ are densely-defined, 
as well as have dense ranges.  

\subsection{Central weights (approximate KMS weights)}

A proper weight $\psi$ on a $C^*$-algebra $A$ is said to be a ``central weight'' \cite{VDvN}, if 
the support projection of its extension weight $\tilde{\psi}$ to the level of the double dual $A^{**}$ 
is a central projection in $A^{**}$.  This notion turns out to be equivalent to that of an 
``approximate KMS weight'', considered by Kustermans and Vaes \cite{KuVa} in their theory of 
locally compact quantum groups.  While being a faithful central weight is a weaker 
condition than being a KMS weight, $\psi$ remains faithful when extended to $\pi_{\psi}(A)''$, 
giving us an n.s.f. weight.

We intend to work primarily with the more familiar notion of KMS weights in our series of papers. 
Yet, it is worth noting that often, we can weaken the requirements to work instead with the central 
weights.  We will consider some of these questions in Part~III.

\subsection{Slicing with weights}

Let $A$ and $B$ be two $C^*$-algebras, with a proper weight $\psi$ on $A$ and its associated GNS-triple 
$({\mathcal H}_{\psi},\pi_{\psi},\Lambda_{\psi})$.  One can make sense of the slice map $\psi\otimes
\operatorname{id}$, which would be an operator-valued weight.  Of course, it is also possible to consider 
the slice from the other side.  Here we collect some notations and results, without proof.  See section~3 
of \cite{KuVaweightC*}.

For $x\in M(A\otimes B)^+$, we say $x\in\overline{\mathfrak M}_{\psi\otimes\operatorname{id}}^+$, if 
the net $\bigl((\theta\otimes\operatorname{id})(x)\bigr)_{\theta\in{\mathcal G}_{\psi}}$ is strictly 
convergent in $B$.  So, for $x\in\overline{\mathfrak M}_{\psi\otimes\operatorname{id}}^+$, one can 
define $(\psi\otimes\operatorname{id})(x)\in M(B)$.  Analogously as before, one can extend 
$\psi\otimes\operatorname{id}$ to $\overline{\mathfrak M}_{\psi\otimes\operatorname{id}}$, the linear 
span of $\overline{\mathfrak M}_{\psi\otimes\operatorname{id}}^+$ in $M(A\otimes B)$.  It is 
a ${}^*$-subalgebra of $M(A\otimes B)$.  We can also consider $\overline{\mathfrak N}_{\psi\otimes
\operatorname{id}}:=\bigl\{x\in M(A\otimes B):x^*x\in\bar{\mathfrak M}_{\psi\otimes
\operatorname{id}}^+\bigr\}$, which is a left ideal in $M(A\otimes B)$.

Here are some results:

\begin{prop}\label{slice_lem1}
Consider $x\in\overline{\mathfrak M}_{\psi\otimes\operatorname{id}}$ and $\omega\in B^*$.  Then 
$(\operatorname{id}\otimes\omega)(x)\in\overline{\mathfrak M}_{\psi}$, and we have: 
$\psi\bigl((\operatorname{id}\otimes\omega)(x)\bigr)=\omega\bigl((\psi\otimes\operatorname{id})(x)\bigr)$.
\end{prop}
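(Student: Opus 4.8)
The plan is to reduce everything to the defining property of the slice map $\psi \otimes \operatorname{id}$ as a limit along the net indexed by $\mathcal{G}_\psi$, and then use the fact that a bounded functional $\omega \in B^*$ commutes with strict limits. First I would treat the positive case: take $x \in \overline{\mathfrak{M}}_{\psi\otimes\operatorname{id}}^+$, so that by definition the net $\bigl((\theta\otimes\operatorname{id})(x)\bigr)_{\theta\in\mathcal{G}_\psi}$ converges strictly in $B$ to $(\psi\otimes\operatorname{id})(x) \in M(B)$. For a fixed $\omega \in B^*_+$, applying $\omega$ (extended to $M(B)$) and using that $\omega$ is strictly continuous on bounded sets gives $\omega\bigl((\psi\otimes\operatorname{id})(x)\bigr) = \lim_{\theta\in\mathcal{G}_\psi} \omega\bigl((\theta\otimes\operatorname{id})(x)\bigr) = \lim_{\theta\in\mathcal{G}_\psi} (\theta\otimes\omega)(x)$, where the last equality is just the elementary identity $\omega\circ(\theta\otimes\operatorname{id}) = \theta\otimes\omega = \theta\circ(\operatorname{id}\otimes\omega)$ for bounded functionals on a tensor product. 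Thus $\theta\bigl((\operatorname{id}\otimes\omega)(x)\bigr)$ converges along $\mathcal{G}_\psi$ to a finite value.

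The next step is to promote this to the statement that $(\operatorname{id}\otimes\omega)(x) \in \overline{\mathfrak{M}}_\psi$ with $\psi$ of it equal to that limit. Since $x \geq 0$ and $\omega \geq 0$, we have $y := (\operatorname{id}\otimes\omega)(x) \in M(A)^+$. By lower semicontinuity of (the extension of) $\psi$ together with the characterization $\bar\psi(y) = \lim_{\theta\in\mathcal{G}_\psi}\theta(y)$ recalled in Section~1.1 for proper weights, the finiteness of this limit is exactly the statement $y \in \overline{\mathfrak{M}}_\psi^+$, and $\bar\psi(y)$ equals the limit. This yields $\psi\bigl((\operatorname{id}\otimes\omega)(x)\bigr) = \omega\bigl((\psi\otimes\operatorname{id})(x)\bigr)$ for $x \geq 0$ and $\omega \geq 0$.

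Finally I would extend by linearity in both arguments. A general $x \in \overline{\mathfrak{M}}_{\psi\otimes\operatorname{id}}$ is a linear combination of elements of $\overline{\mathfrak{M}}_{\psi\otimes\operatorname{id}}^+$ (that is the definition of $\overline{\mathfrak{M}}_{\psi\otimes\operatorname{id}}$ as the span of its positive part, stated in the paragraph preceding the proposition), and every $\omega \in B^*$ is a linear combination of at most four positive functionals; since both $\operatorname{id}\otimes\omega$ and the extended $\psi$ are linear on the relevant spans, the identity passes through. The only point requiring a little care — and the main obstacle, such as it is — is making sure the slice $(\operatorname{id}\otimes\omega)(x)$ really lands in $\overline{\mathfrak{M}}_\psi$ rather than merely having a convergent defining net: this is where one must invoke the lower-semicontinuity characterization of proper weights from Section~1.1 rather than take it for granted, and where one should note that $\mathcal{G}_\psi$ is a directed set so that "limit along the net" is unambiguous. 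Everything else is the bookkeeping of strict continuity of bounded functionals and bilinearity of the slice map.
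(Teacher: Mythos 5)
The paper gives no proof of this proposition: it is stated among results collected ``without proof,'' with a citation to section~3 of the Kustermans--Vaes weight-theory preprint, so there is nothing internal to compare against. Your argument is correct and is essentially the standard one from that reference: reduce to $x\ge0$, $\omega\ge0$, use strict continuity of $\omega$ on bounded sets together with the identity $\omega\circ(\theta\otimes\operatorname{id})=\theta\circ(\operatorname{id}\otimes\omega)$ for $\theta\in\mathcal{G}_\psi$, invoke the characterization $\bar\psi(y)=\lim_{\theta\in\mathcal{G}_\psi}\theta(y)$ of the extended weight on $M(A)^+$, and then bilinearize. The only step you leave implicit is the norm-boundedness of the net $\bigl((\theta\otimes\operatorname{id})(x)\bigr)_{\theta\in\mathcal{G}_\psi}$, which is needed to apply strict continuity of $\omega$; this is immediate because the net is increasing (as $x\ge0$ and $\mathcal{G}_\psi$ is directed in $A^*_+$) and hence dominated in norm by its strict limit $(\psi\otimes\operatorname{id})(x)$.
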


\begin{prop}
Let $x\in M(A\otimes B)^+$ and $b\in M(B)^+$ be such that $(\operatorname{id}\otimes\omega)(x)
\in\overline{\mathfrak M}_{\psi}^+$ and that $\psi\bigl((\operatorname{id}\otimes\omega)(x)\bigr)
=\omega(b)$ for all $\omega\in B^*_+$.  Then $x\in\overline{\mathfrak M}_{\psi\otimes\operatorname{id}}^+$, 
and we have: $(\psi\otimes\operatorname{id})(x)=b$.
\end{prop}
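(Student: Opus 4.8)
The statement is the converse of Proposition~\ref{slice_lem1}, so the natural plan is to reduce everything to the behaviour of the net
$$
T_\theta:=(\theta\otimes\operatorname{id})(x)\in M(B)^+,\qquad\theta\in{\mathcal G}_\psi,
$$
and to show it converges strictly to $b$. Two facts are immediate from $x\ge0$: the net is increasing, since for $\theta_1\le\theta_2$ in ${\mathcal G}_\psi$ the functional $\theta_2-\theta_1$ is positive and hence $T_{\theta_2}-T_{\theta_1}=\bigl((\theta_2-\theta_1)\otimes\operatorname{id}\bigr)(x)\ge0$; and ${\mathcal G}_\psi$ is a directed index set. Now fix $\omega\in B^*_+$. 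By hypothesis $(\operatorname{id}\otimes\omega)(x)\in\overline{\mathfrak M}_{\psi}^+$, and using the lower semi-continuity formula $\psi(y)=\lim_{\theta\in{\mathcal G}_\psi}\theta(y)$ for $y\in A^+$ together with $\theta\bigl((\operatorname{id}\otimes\omega)(x)\bigr)=(\theta\otimes\omega)(x)=\omega(T_\theta)$, we obtain
$$
\omega(b)=\psi\bigl((\operatorname{id}\otimes\omega)(x)\bigr)=\lim_{\theta\in{\mathcal G}_\psi}\omega(T_\theta)=\sup_{\theta\in{\mathcal G}_\psi}\omega(T_\theta).
$$
In particular $T_\theta\le b$ for every $\theta$, so $\|T_\theta\|\le\|b\|$ and the net is norm-bounded; and $\omega(T_\theta)\to\omega(b)$ for all $\omega\in B^*_+$, hence (splitting an arbitrary functional into positive pieces) for all $\omega\in B^*$. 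Thus $T_\theta\to b$ in the weak topology $\sigma(M(B),B^*)$.

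The real content is promoting this weak convergence of the increasing net to strict convergence. Fix $c\in B$ and put $d_\theta:=c^*(b-T_\theta)c$. This is a \emph{decreasing} net in $B^+$ that converges to $0$ weakly: for $\omega\in B^*$ the map $y\mapsto\omega(c^*yc)$ again determines an element of $B^*$, so $\omega(d_\theta)=\omega(c^*bc)-\omega(c^*T_\theta c)\to0$. Here I would use the general fact that a decreasing net of positive elements of a $C^*$-algebra which converges to $0$ weakly converges to $0$ in norm. (Sketch: if not, monotonicity of the norm gives $\|d_\theta\|\ge\varepsilon>0$ for all $\theta$; choose states $\varphi_\theta$ with $\varphi_\theta(d_\theta)>\varepsilon/2$, extract a weak-$*$ convergent subnet $\varphi_{\theta_k}\to\varphi$ in the weak-$*$ compact set $\{\phi\in B^*_+:\|\phi\|\le1\}$, and note that for any fixed $\theta_0$ one has $\varphi_{\theta_k}(d_{\theta_0})\ge\varphi_{\theta_k}(d_{\theta_k})>\varepsilon/2$ eventually, whence $\varphi(d_{\theta_0})\ge\varepsilon/2$ for all $\theta_0$, contradicting $\varphi(d_{\theta_0})\to0$.) Hence $\|c^*(b-T_\theta)c\|\to0$, and then the spectral inequality $S^2\le\|S\|\,S$ for $S:=b-T_\theta\ge0$ gives
$$
\|(b-T_\theta)c\|^2=\bigl\|c^*(b-T_\theta)^2c\bigr\|\le\|b-T_\theta\|\,\bigl\|c^*(b-T_\theta)c\bigr\|\le\|b\|\,\bigl\|c^*(b-T_\theta)c\bigr\|\longrightarrow0;
$$
replacing $c$ by $c^*$ and taking adjoints gives $\|c(b-T_\theta)\|\to0$ as well. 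Since $c\in B$ was arbitrary, $T_\theta\to b$ strictly.

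It then remains only to collect the pieces: the net $\bigl((\theta\otimes\operatorname{id})(x)\bigr)_{\theta\in{\mathcal G}_\psi}$ is strictly convergent, so by definition $x\in\overline{\mathfrak M}_{\psi\otimes\operatorname{id}}^+$, and $(\psi\otimes\operatorname{id})(x)=\lim_\theta T_\theta=b$, as desired. I expect the only genuinely non-routine step to be the weak-to-strict upgrade, that is, the monotone-net lemma above; everything else is a matter of unwinding the definitions of ${\mathcal G}_\psi$, of the slice map $\theta\otimes\operatorname{id}$, and of $\overline{\mathfrak M}_{\psi\otimes\operatorname{id}}^+$, together with the lower semi-continuity of $\psi$ recalled in Section~1.
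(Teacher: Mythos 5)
Your proof is correct. Note that the paper itself gives no proof of this proposition, deferring to section~3 of \cite{KuVaweightC*}; your argument --- reduce everything to showing that the increasing, $b$-dominated net $\bigl((\theta\otimes\operatorname{id})(x)\bigr)_{\theta\in{\mathcal G}_\psi}$ converges strictly to $b$, via the noncommutative Dini-type lemma upgrading weak convergence of a monotone net to norm (hence strict) convergence --- is essentially the standard one found in that reference, and you correctly identify that the existence of the majorant $b\in M(B)^+$ is the essential input (boundedness of an increasing net alone does not give strict convergence, e.g.\ in $M(C_0(\mathbb{R}))$).
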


\begin{prop}\label{cauchyschwarz}
Let $x,y\in\overline{\mathfrak N}_{\psi\otimes\operatorname{id}}$.  Then the following ``generalized 
Cauchy--Schwarz inequality'' holds:
$$
\bigl((\psi\otimes\operatorname{id})(y^*x)\bigr)^*\bigl((\psi\otimes\operatorname{id})(y^*x)\bigr)
\le\bigl\|(\psi\otimes\operatorname{id})(y^*y)\bigr\|(\psi\otimes\operatorname{id})(x^*x).
$$
\end{prop}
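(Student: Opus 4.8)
The plan is to reduce the operator inequality to a family of scalar Cauchy--Schwarz inequalities by pairing both sides against arbitrary positive functionals on $B$, in the spirit of Proposition~\ref{slice_lem1}. More precisely, for any $\omega\in B^*_+$ choose a (not necessarily unique) vector-state-type decomposition, or simply work with $\omega$ directly: the map $(a,b)\mapsto\omega\bigl((\psi\otimes\operatorname{id})(b^*a)\bigr)$ should define a positive sesquilinear form on $\overline{\mathfrak N}_{\psi\otimes\operatorname{id}}$. Indeed, for $x\in\overline{\mathfrak N}_{\psi\otimes\operatorname{id}}$ we have $x^*x\in\overline{\mathfrak M}_{\psi\otimes\operatorname{id}}^+$, hence $(\operatorname{id}\otimes\omega)(x^*x)\in\overline{\mathfrak M}_\psi^+$ and $\psi\bigl((\operatorname{id}\otimes\omega)(x^*x)\bigr)=\omega\bigl((\psi\otimes\operatorname{id})(x^*x)\bigr)\ge0$ by Proposition~\ref{slice_lem1}. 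Polarization, together with the fact that $\overline{\mathfrak N}_{\psi\otimes\operatorname{id}}$ is a left ideal (so $y^*x$ lands in $\overline{\mathfrak M}_{\psi\otimes\operatorname{id}}$ for $x,y$ in it), shows the form is well-defined, so the ordinary Cauchy--Schwarz inequality for positive sesquilinear forms gives
\begin{equation}
\bigl|\omega\bigl((\psi\otimes\operatorname{id})(y^*x)\bigr)\bigr|^2
\le\omega\bigl((\psi\otimes\operatorname{id})(y^*y)\bigr)\,\omega\bigl((\psi\otimes\operatorname{id})(x^*x)\bigr).\notag
\end{equation}

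Next I would upgrade this to the stated operator inequality. Write $c:=(\psi\otimes\operatorname{id})(y^*x)\in M(B)$ and $d:=(\psi\otimes\operatorname{id})(x^*x)\in M(B)^+$; the goal is $c^*c\le\|(\psi\otimes\operatorname{id})(y^*y)\|\,d$. The key observation is that the scalar inequality above actually holds with $\omega$ applied to the relevant \emph{operators} in $M(B)$, and one can then test against states to recover the operator order. For a fixed $\omega\in B^*_+$, extend $\omega$ to $M(B)$ and apply the Cauchy--Schwarz inequality for the positive functional $\omega$ on the $C^*$-algebra $M(B)$ to the elements $c$ and the positive element $\|(\psi\otimes\operatorname{id})(y^*y)\|^{1/2}$: more directly, since $\omega\bigl((\psi\otimes\operatorname{id})(y^*y)\bigr)\le\|(\psi\otimes\operatorname{id})(y^*y)\|\,\omega(1)=\|(\psi\otimes\operatorname{id})(y^*y)\|\,\|\omega\|$ is not quite what is wanted, I instead replace $\omega$ by the functionals $\omega_b:=\omega(b^*\cdot\,b)$ for $b\in B$ in the sesquilinear-form argument, or equivalently run the Cauchy--Schwarz step with the form $(a,a')\mapsto\omega\bigl(b^*(\psi\otimes\operatorname{id})((a')^*a)b\bigr)$, which lets the norm of $(\psi\otimes\operatorname{id})(y^*y)$ enter on the correct side. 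Unwinding, for every $\omega\in B^*_+$ and $b\in M(B)$ one gets $\omega\bigl(b^*c^*cb\bigr)\le\|(\psi\otimes\operatorname{id})(y^*y)\|\,\omega\bigl(b^*db\bigr)$, and since positive functionals of this form separate points of $M(B)^+$ we conclude $c^*c\le\|(\psi\otimes\operatorname{id})(y^*y)\|\,d$, which is the claim.

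The main obstacle I anticipate is the bookkeeping of multiplier-algebra subtleties and the careful justification that the relevant slice maps land where claimed: specifically, that $y^*x\in\overline{\mathfrak M}_{\psi\otimes\operatorname{id}}$ for $x,y\in\overline{\mathfrak N}_{\psi\otimes\operatorname{id}}$ (so that $(\psi\otimes\operatorname{id})(y^*x)$ even makes sense as an element of $M(B)$), and that Proposition~\ref{slice_lem1} applies to the needed functionals after extension to $M(A\otimes B)$. The first of these follows from $\overline{\mathfrak N}_{\psi\otimes\operatorname{id}}$ being a left ideal together with $\overline{\mathfrak M}_{\psi\otimes\operatorname{id}}=\overline{\mathfrak N}_{\psi\otimes\operatorname{id}}^*\overline{\mathfrak N}_{\psi\otimes\operatorname{id}}$, both recorded in the excerpt; the second is routine once one notes that functionals of the form $\omega(b^*\cdot\,b)$ are again in $B^*_+$. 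A secondary technical point is ensuring the polarization identity is valid with all four terms lying in $\overline{\mathfrak M}_{\psi\otimes\operatorname{id}}$, which again is guaranteed by the left-ideal property. Once these are in place, the argument is the standard Cauchy--Schwarz-for-operator-valued-weights computation and goes through without further difficulty.
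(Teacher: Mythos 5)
The paper states this proposition without proof (it is quoted from section~3 of \cite{KuVaweightC*}), so your attempt has to stand on its own. The first half does: for $\omega\in B^*_+$ the form $(a,a')\mapsto\omega\bigl((\psi\otimes\operatorname{id})((a')^*a)\bigr)$ on $\overline{\mathfrak N}_{\psi\otimes\operatorname{id}}$ is positive sesquilinear (the left-ideal property guarantees $(a')^*a\in\overline{\mathfrak M}_{\psi\otimes\operatorname{id}}$), and the scalar Cauchy--Schwarz inequality you derive is correct. The gap is in the ``unwinding'' step. Write $c=(\psi\otimes\operatorname{id})(y^*x)$, $d=(\psi\otimes\operatorname{id})(x^*x)$, $e=(\psi\otimes\operatorname{id})(y^*y)$. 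Running Cauchy--Schwarz with the conjugated form $\omega(b^*\,\cdot\,b)$ gives
$$
\bigl|\omega(b^*cb)\bigr|^2\le\omega(b^*eb)\,\omega(b^*db)\le\|e\|\,\omega(b^*b)\,\omega(b^*db),
$$
which is a numerical-range bound on $c$; it does \emph{not} yield the asserted $\omega(b^*c^*cb)\le\|e\|\,\omega(b^*db)$, and the implication ``numerical-range bound $\Rightarrow$ operator inequality'' is false in general. Concretely, in $M_2(\mathbb{C})$ with matrix units $e_{ij}$, take $c=e_{12}$ and $d=e_{11}$: then $|\langle c\zeta,\zeta\rangle|^2=|\zeta_1|^2|\zeta_2|^2\le\|\zeta\|^2\langle d\zeta,\zeta\rangle$ for every vector $\zeta$, yet $c^*c=e_{22}\not\le d$. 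So the family of inequalities you actually establish does not separate the point you need.

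The missing idea is to feed $c$ itself into one slot of the form, using the $M(B)$-module property of the slice map, $(\psi\otimes\operatorname{id})\bigl(z(1\otimes m)\bigr)=(\psi\otimes\operatorname{id})(z)\,m$ for $m\in M(B)$. Since $y(1\otimes c)\in\overline{\mathfrak N}_{\psi\otimes\operatorname{id}}$ with $(\psi\otimes\operatorname{id})\bigl((1\otimes c^*)y^*y(1\otimes c)\bigr)=c^*ec$, applying your scalar Cauchy--Schwarz inequality to the pair $x$ and $y(1\otimes c)$ gives, for every $\omega\in B^*_+$,
$$
\omega(c^*c)^2=\bigl|\omega\bigl((\psi\otimes\operatorname{id})((1\otimes c^*)y^*x)\bigr)\bigr|^2
\le\omega(d)\,\omega(c^*ec)\le\|e\|\,\omega(d)\,\omega(c^*c),
$$
hence $\omega(c^*c)\le\|e\|\,\omega(d)$ (trivially if $\omega(c^*c)=0$), and since $\omega\in B^*_+$ was arbitrary, $c^*c\le\|e\|\,d$. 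With that substitution your argument closes; without it, the passage from the scalar to the operator inequality is not justified.
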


\subsection{tensor product of weights}

Let $A$ and $B$ be two $C^*$-algebras.  Consider a proper weight $\psi$ on $A$ and a proper weight 
$\varphi$ on $B$.  One may wish to consider the notion of the tensor product weight $\psi\otimes\varphi$ 
on $A\otimes B$.  Here again, we collect below some notations and results, without proof.  See 
section~4 of \cite{KuVaweightC*}. 

One can define the tensor product weight $\psi\otimes\varphi$ on $A\otimes B$, as follows:
$$
(\psi\otimes\varphi)(x)=\sup\bigl\{(\theta\otimes\omega)(x)|\theta\in{\mathcal F}_{\psi},
\omega\in{\mathcal F}_{\varphi}\bigr\},\quad {\text { for $x\in(A\otimes B)^+$.}}
$$
This definition can be naturally extended to the multiplier algebra level.

By definition, it is easy to see the following Fubini-type result:
$$
\varphi\bigl((\psi\otimes\operatorname{id})(x)\bigr)=(\psi\otimes\varphi)(x),
$$
for $x\in{\mathfrak M}_{\psi\otimes\operatorname{id}}^+$.  Obviously, the roles of $\psi$ and 
$\varphi$ can be reversed.

Fix GNS-triples $({\mathcal H}_{\psi},\pi_{\psi},\Lambda_{\psi})$ for $\psi$ and 
$({\mathcal H}_{\varphi},\pi_{\varphi},\Lambda_{\varphi})$ for $\varphi$.  In general, the space 
${\mathcal H}_{\psi}\otimes{\mathcal H}_{\varphi}$ could be too small to serve as a GNS-space 
for $\psi\otimes\varphi$.  So one introduces a special subset $\overline{\mathfrak N}
(\psi,\varphi)\,\bigl(\subseteq\overline{\mathfrak N}_{\psi\otimes\varphi}\bigr)$, on which 
the GNS map $\Lambda_{\psi}\otimes\Lambda_{\varphi}:\overline{\mathfrak N}(\psi,\varphi)
\to{\mathcal H}_{\psi}\otimes{\mathcal H}_{\varphi}$ can be defined.  To be a little more precise, 
we have $x\in\overline{\mathfrak N}(\psi,\varphi)$, if there exists a $v\in{\mathcal H}_{\psi}
\otimes{\mathcal H}_{\varphi}$ such that $\|v\|^2=(\psi\otimes\varphi)(x^*x)$ and $\bigl\langle
v,\Lambda_{\psi}(a)\otimes\Lambda_{\varphi}(b)\bigr\rangle=(\psi\otimes\varphi)\bigl((a^*\otimes b^*)
x\bigr)$ for $a\in{\mathfrak N}_{\psi}$, $b\in{\mathfrak N}_{\varphi}$.

If one of the weights $\psi$ and $\varphi$ is a KMS weight, then it turns out that 
$\overline{\mathfrak N}(\psi,\varphi)=\bar{\mathfrak N}_{\psi\otimes\varphi}$, and 
$({\mathcal H}_{\psi}\otimes{\mathcal H}_{\varphi},\pi_{\psi}\otimes\pi_{\varphi},
\Lambda_{\psi}\otimes\Lambda_{\varphi})$ is indeed a GNS-construction for $\phi\otimes\varphi$. 

In particular, if both $\psi$ and $\varphi$ are KMS weights, then ${\mathfrak N}_{\psi}\otimes
{\mathfrak N}_{\varphi}\,(\subseteq{\mathfrak N}_{\psi\otimes\varphi})$ is a core for the 
GNS-map $\Lambda_{\psi}\otimes\Lambda_{\varphi}$, and $\psi\otimes\varphi$ also becomes 
a KMS weight on $A\otimes B$.  Its modular automorphism group is $\sigma^{\psi}\otimes\sigma^{\varphi}$.

\section{Separability idempotents}

The quantum groupoids that we wish to consider in this paper are modeled after the (purely algebraic) 
notion of {\em weak multiplier Hopf algebras\/} \cite{VDWangwha0}, \cite{VDWangwha1}, which generalizes 
the {\em weak Hopf algebras\/} \cite{BNSwha1}, \cite{BSwha2}.  In these approaches, the existence 
of a certain idempotent element $E$ plays a central role.  Among its properties is the fact that it is a 
{\em separability idempotent\/}.  So for our purposes, we need to define and clarify such a notion in the 
setting of $C^*$-algebras.  This was done in the authors' previous paper \cite{BJKVD_SepId}.  In this 
section, we give a brief summary.  Some of the results are provided without proof, and some only with a 
lighter version of the proof, with details referred to the previous paper.

We begin by considering a triple $(E,B,\nu)$, where
\begin{itemize}
  \item $B$ is a $C^*$-algebra;
  \item $\nu$ is a faithful KMS weight on $B$, together with its associated norm-continuous automorphism 
        group $(\sigma^{\nu}_t)_{t\in\mathbb{R}}$;
  \item $E\in M(B\otimes C)$ is a self-adjoint idempotent element for some $C^*$-algebra $C$ such that 
        there exists a ${}^*$-anti-isomorphism $R_{BC}:B\to C$.
\end{itemize}

In the above, it is clear that $C\cong B^{\operatorname{op}}$ as $C^*$-algebras.  Even so, there is 
no reason to assume that $C$ and $B^{\operatorname{op}}$ are exactly same.  Later, $B$ and $C$ 
will be the ``source algebra'' and the ``target algebra'', respectively, with the weight $\nu$ being analogous 
to choosing a suitable measure on the unit space of a locally compact groupoid.   Going forward, we will 
understand our triple $(E,B,\nu)$ in such a way that the $C^*$-algebra $C$ and the ${}^*$-aniti-isomorphism 
$R_{BC}:B\to C$ are implicitly fixed, with $C$ possibly different from $B^{\operatorname{op}}$.  Moreover, 
for convenience, unless we really need to specify the algebras $B$ and $C$, we will just write $R$ 
to denote the map $R_{BC}$. 

We give below the definition of a separability idempotent. See Definition~2.2 of \cite{BJKVD_SepId}.

\begin{defn}\label{separabilitytriple}
We will say that $(E,B,\nu)$ is a {\em separability triple\/}, if the following conditions hold:
\begin{enumerate}
  \item $(\nu\otimes\operatorname{id})(E)=1$
  \item For $b\in{\mathcal D}(\sigma^{\nu}_{i/2})$, we have: $(\nu\otimes\operatorname{id})
\bigl(E(b\otimes1)\bigr)=R\bigl(\sigma^{\nu}_{i/2}(b)\bigr)$.
\end{enumerate}
If $(E,B,\nu)$ forms a separability triple, then we say $E$ is a {\em separability idempotent\/}.
\end{defn}

\begin{rem}
In the definition above, (1) means that for any $\omega\in C^*_+$, 
we require that $(\operatorname{id}\otimes\omega)(E)\in\overline{\mathfrak M}_\nu\,\bigl(\subseteq 
M(B)\bigr)$ and that $\nu\bigl((\operatorname{id}\otimes\omega)(E)\bigr)=\omega(1)$.  From this, 
it will follow that $(\operatorname{id}\otimes\omega)(E)\in\overline{\mathfrak M}_\nu$ for any 
$\omega\in C^*$, and that $\nu\bigl((\operatorname{id}\otimes\omega)(E)\bigr)=\omega(1),
\forall\omega\in C^*$.  Meanwhile, by Lemma~\ref{lemKMS}, having $(\operatorname{id}\otimes\omega)(E)
\in\overline{\mathfrak M}_\nu$, $\omega\in C^*$,  means $(\operatorname{id}\otimes\omega)(E)
b\in{\mathfrak M}_\nu$, for $b\in{\mathcal D}(\sigma^{\nu}_{i/2})$.  Condition~(2) means that 
when we apply $\nu$ here, we require that $\nu\bigl((\operatorname{id}\otimes\omega)(E)b\bigr)
=\omega\bigl((R\circ\sigma^{\nu}_{i/2})(b)\bigr)$, $\forall\omega$.  In other words,  $(\nu\otimes\operatorname{id})
\bigl(E(b\otimes1)\bigr)=R\bigl(\sigma^{\nu}_{i/2}(b)\bigr)$.
\end{rem}

\begin{prop}\label{E_unique}
Let $E$ be a separability idempotent.  Then $E$ is uniquely determined by the data $(B,\nu,R)$.
\end{prop}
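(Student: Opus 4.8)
The plan is to show that a separability idempotent is already pinned down by condition~(2) of Definition~\ref{separabilitytriple} alone, since the right-hand side there depends only on $(B,\nu,R)$; a density-plus-faithfulness argument then recovers $E$ itself.

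First I would take two separability idempotents $E_1,E_2$ attached to the same data $(B,\nu,R)$ (with the same auxiliary $C$ and $R=R_{BC}$) and set $F:=E_1-E_2\in M(B\otimes C)$. Fix $\omega\in C^*$ and put $m_\omega:=(\operatorname{id}\otimes\omega)(F)$. By the Remark following Definition~\ref{separabilitytriple}, each $(\operatorname{id}\otimes\omega)(E_j)$ lies in $\overline{\mathfrak M}_\nu$, so $m_\omega\in\overline{\mathfrak M}_\nu$; moreover, using $(\operatorname{id}\otimes\omega)\bigl(E_j(b\otimes1)\bigr)=(\operatorname{id}\otimes\omega)(E_j)\,b$ and the reformulation of condition~(2) given in that Remark, we get $\nu\bigl((\operatorname{id}\otimes\omega)(E_j)\,b\bigr)=\omega\bigl(R(\sigma^\nu_{i/2}(b))\bigr)$ for every $b\in\mathcal D(\sigma^\nu_{i/2})$, independently of $j$. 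Subtracting, $\nu(m_\omega b)=0$ for all $b\in\mathcal D(\sigma^\nu_{i/2})$.

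The heart of the argument is the following claim: an element $m\in\overline{\mathfrak M}_\nu$ with $\nu(mb)=0$ for all $b\in\mathcal D(\sigma^\nu_{i/2})$ must be $0$. I would test $m$ against the Tomita ${}^*$-algebra $\mathcal T_\nu$, which is a dense ${}^*$-subalgebra of $B$ stable under all $\sigma^\nu_z$, contained in $\mathfrak N_\nu\cap\mathfrak N_\nu^*$, with $\Lambda_\nu(\mathcal T_\nu)$ dense in $\mathcal H_\nu$. For $b_1,b_2\in\mathcal T_\nu$ the Remark gives $mb_1\in\mathfrak M_\nu$, so Lemma~\ref{lemKMS}(2) applies with $a=b_2^*\in\mathcal D(\sigma^\nu_{-i})$ and $x=mb_1$ and yields $\nu(b_2^*mb_1)=\nu\bigl((mb_1)\,\sigma^\nu_{-i}(b_2^*)\bigr)=\nu\bigl(m\,(b_1\sigma^\nu_{-i}(b_2^*))\bigr)$; since $b_1\sigma^\nu_{-i}(b_2^*)\in\mathcal T_\nu\subseteq\mathcal D(\sigma^\nu_{i/2})$, the hypothesis forces this to be $0$. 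On the other hand $mb_1\in\mathfrak N_\nu$ (as $\mathfrak N_\nu$ is a left ideal in $M(B)$), so $\pi_\nu(m)\Lambda_\nu(b_1)=\Lambda_\nu(mb_1)$ and hence $\bigl\langle\pi_\nu(m)\Lambda_\nu(b_1),\Lambda_\nu(b_2)\bigr\rangle=\nu(b_2^*mb_1)=0$. Thus $\pi_\nu(m)$ annihilates the dense subspace $\Lambda_\nu(\mathcal T_\nu)$, so $\pi_\nu(m)=0$; since $\nu$ is faithful, $\pi_\nu$ is faithful on $B$, and therefore on $M(B)$ (if $\pi_\nu(m)=0$ then $mb=0$ for all $b\in B$, so $m=0$), giving $m=0$.

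Applying this to $m=m_\omega$ yields $(\operatorname{id}\otimes\omega)(F)=0$ for every $\omega\in C^*$, and since such slices separate the points of $M(B\otimes C)$, we conclude $F=0$, i.e.\ $E_1=E_2$. I expect the third paragraph to be the main obstacle: one has to keep careful track of which of the subspaces $\mathfrak M_\nu$, $\overline{\mathfrak M}_\nu$, $\mathfrak N_\nu$, $\mathcal D(\sigma^\nu_{\pm i})$ each product lies in, so that the KMS identity Lemma~\ref{lemKMS}(2) and the identification $\Lambda_\nu(mb_1)=\pi_\nu(m)\Lambda_\nu(b_1)$ are legitimate despite $m_\omega$ being in general a genuine multiplier rather than an element of $B$.
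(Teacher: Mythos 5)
Your argument is correct. The paper itself does not prove this proposition here---it only cites Proposition~2.3 of \cite{BJKVD_SepId}---but your proof is the natural one and matches the way the uniqueness is exploited elsewhere in the paper (e.g.\ in Proposition~\ref{sigmasigmaE} and Proposition~\ref{sepidgamma}): condition~(2) of Definition~\ref{separabilitytriple} pins down $\nu\bigl((\operatorname{id}\otimes\omega)(E)\,b\bigr)$ for $b$ in the dense analytic domain, and faithfulness of $\nu$ (made rigorous via the Tomita algebra, the KMS identity of Lemma~\ref{lemKMS}(2), and the GNS representation) then forces any two candidates to agree. The only points worth tightening are routine: the deduction $\pi_\nu(m)=0\Rightarrow m=0$ should pass through $\Lambda_\nu(mb)=0$ and faithfulness of $\nu$ on $B^+$ for $b\in{\mathfrak N}_\nu$, and the density of $\Lambda_\nu({\mathcal T}_\nu)$ in ${\mathcal H}_\nu$ is the standard smoothing fact for KMS weights.
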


\begin{proof}
See Proposition~2.3 of \cite{BJKVD_SepId}.
\end{proof}

As we regard a separability triple $(E,B,\nu)$ to implicitly fix the $C^*$-algebra $C$ and the anti-isomorphism 
$R$, this result means that $E$ is more or less characterized by the pair $(B,\nu)$. However, there is no 
reason to believe that $E$ determined in this way would have to be a projection.  In other words, the 
existence of a separability idempotent $E$ is really a condition on the pair $(B,\nu)$.  See \cite{BJKVD_SepId} 
for a more careful discussion.  In particular, in section~6 of that paper, it is shown that $B$ has to be postliminal.

Let us now collect some properties of a separability idempotent.  We first begin with the existence 
of a densely-defined map $\gamma_B:B\to C$.

\begin{prop}\label{gammamap}
\begin{enumerate}
  \item For $b\in{\mathcal D}(\sigma^{\nu}_{i/2})$, write $\gamma_B(b):=R\bigl(\sigma^{\nu}_{i/2}(b)\bigr)\in C$. 
This determines $\gamma_B$ as a closed, densely-defined map from $B$ to $C$.  It is also injective, 
and has a dense range in $C$.
  \item $\gamma_B(b)=(\nu\otimes\operatorname{id})\bigl(E(b\otimes1)\bigr)$, for $b\in{\mathcal D}(\gamma_B)
={\mathcal D}(\sigma^{\nu}_{i/2})$.
  \item ${\mathcal D}(\gamma_B)$ is closed under multiplication, and the map $\gamma_B$ is an anti-homomorphism.
\end{enumerate}
\end{prop}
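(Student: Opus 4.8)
The plan is to establish the three items in order, using the characterization $\gamma_B(b) = (\nu\otimes\operatorname{id})\bigl(E(b\otimes1)\bigr)$ from the Remark following Definition~\ref{separabilitytriple} as the bridge between the ``analytic'' description in (1) and the ``integral'' description in (2).

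\emph{Item (1).} First I would note that $\gamma_B = R\circ\sigma^{\nu}_{i/2}$ is a composition of two maps, each of which is closed, densely-defined, injective, with dense range. Indeed, $\sigma^{\nu}_{i/2}$ is a closed, densely-defined map by Definition~\ref{KMSweight}, and its domain ${\mathcal D}(\sigma^{\nu}_{i/2})$ contains the dense $*$-algebra ${\mathcal T}_\nu$, so it is densely-defined; since $(\sigma^{\nu}_t)$ is a one-parameter group, $\sigma^{\nu}_{i/2}$ is injective with inverse $\sigma^{\nu}_{-i/2}$ (also densely-defined, hence $\sigma^{\nu}_{i/2}$ has dense range). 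The map $R = R_{BC}$ is a $*$-anti-isomorphism of $C^*$-algebras, hence an isometric bijection $B\to C$; composing, $\gamma_B$ is injective with dense range. Closedness of the composition of a closed map with an isometric isomorphism is routine. So item (1) is essentially a formal consequence of the KMS structure and the fact that $R$ is an isomorphism.

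\emph{Item (2).} This is exactly the content of Condition~(2) of Definition~\ref{separabilitytriple} together with the reformulation in the Remark: for $b\in{\mathcal D}(\sigma^{\nu}_{i/2})$ one has $(\nu\otimes\operatorname{id})\bigl(E(b\otimes1)\bigr) = R\bigl(\sigma^{\nu}_{i/2}(b)\bigr) = \gamma_B(b)$, and by definition ${\mathcal D}(\gamma_B) = {\mathcal D}(\sigma^{\nu}_{i/2})$. Here I should check that $E(b\otimes1)$ genuinely lies in the domain of the slice map $\nu\otimes\operatorname{id}$: by the Remark, $(\operatorname{id}\otimes\omega)(E)\in\overline{\mathfrak M}_\nu$ for all $\omega\in C^*$, and by Lemma~\ref{lemKMS}(1) multiplying on the right by $b\in{\mathcal D}(\sigma^{\nu}_{i/2})$ keeps us in ${\mathfrak M}_\nu$, so Proposition~\ref{slice_lem1} (applied in reverse, via the Remark's argument) shows $E(b\otimes1)\in\overline{\mathfrak M}_{\nu\otimes\operatorname{id}}$. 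So item (2) is just unwinding definitions.

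\emph{Item (3).} This is the substantive part. I need to show that if $b,c\in{\mathcal D}(\sigma^{\nu}_{i/2})$ then $bc\in{\mathcal D}(\sigma^{\nu}_{i/2})$ and $\gamma_B(bc)=\gamma_B(c)\gamma_B(b)$. The natural route is to first verify the claim on the Tomita $*$-algebra ${\mathcal T}_\nu$, where $\sigma^{\nu}_{z}$ is everywhere defined and multiplicative: for $b,c\in{\mathcal T}_\nu$ we have $\sigma^{\nu}_{i/2}(bc)=\sigma^{\nu}_{i/2}(b)\sigma^{\nu}_{i/2}(c)$, and then $\gamma_B(bc)=R\bigl(\sigma^{\nu}_{i/2}(b)\sigma^{\nu}_{i/2}(c)\bigr) = R\bigl(\sigma^{\nu}_{i/2}(c)\bigr)R\bigl(\sigma^{\nu}_{i/2}(b)\bigr) = \gamma_B(c)\gamma_B(b)$, using that $R$ is an \emph{anti}-homomorphism. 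To pass from ${\mathcal T}_\nu$ to all of ${\mathcal D}(\sigma^{\nu}_{i/2})$ one would use that ${\mathcal T}_\nu$ is a core for $\sigma^{\nu}_{i/2}$ together with the closedness from item (1): given $b,c\in{\mathcal D}(\gamma_B)$, approximate $b$ by $b_n\in{\mathcal T}_\nu$ with $b_n\to b$, $\sigma^{\nu}_{i/2}(b_n)\to\sigma^{\nu}_{i/2}(b)$, and similarly $c_n\to c$; then $b_nc_n\to bc$ and $\gamma_B(b_nc_n)=\gamma_B(c_n)\gamma_B(b_n)\to\gamma_B(c)\gamma_B(b)$, so closedness of $\gamma_B$ forces $bc\in{\mathcal D}(\gamma_B)$ with the stated value.

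The main obstacle I anticipate is the limiting argument in item (3): one needs an approximation of elements of ${\mathcal D}(\sigma^{\nu}_{i/2})$ by elements of ${\mathcal T}_\nu$ that simultaneously controls the element and its image under $\sigma^{\nu}_{i/2}$ (a core statement in the graph norm), and one must be careful that products of the approximants stay in a domain where multiplicativity of $\sigma^{\nu}_{i/2}$ is available. An alternative that avoids the core technology is to work directly from item (2): express $\gamma_B(bc) = (\nu\otimes\operatorname{id})\bigl(E(bc\otimes1)\bigr)$ and manipulate using the KMS-invariance of $\nu$ (Lemma~\ref{lemKMS}(2)) to slide $b$ across, together with the defining relation $(\nu\otimes\operatorname{id})(E)=1$; but this still requires knowing $bc\in{\mathcal D}(\sigma^{\nu}_{i/2})$ a priori, so the core argument seems unavoidable and is where the real work lies. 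I would present the ${\mathcal T}_\nu$ computation cleanly and then invoke the standard core/closedness approximation, citing \cite{Tk2} or \cite{Str} for the core property of ${\mathcal T}_\nu$.
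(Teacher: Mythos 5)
Your items (1) and (2) run exactly as in the paper: injectivity and closedness from $R$ being an isometric ${}^*$-anti-isomorphism and $\sigma^{\nu}_{i/2}$ a closed densely-defined injection, dense range via ${\mathcal T}_{\nu}$, and (2) is just the reformulation of Definition~\ref{separabilitytriple}\,(2) given in the Remark. Where you diverge is item (3). The paper disposes of it in one line by invoking the standard fact that the domain of the analytic generator is itself a subalgebra on which $\sigma^{\nu}_{i/2}$ is multiplicative: if $b,b'\in{\mathcal D}(\sigma^{\nu}_{i/2})$, the pointwise product of the two strip-analytic extensions $z\mapsto\sigma^{\nu}_z(b)$ and $z\mapsto\sigma^{\nu}_z(b')$ is again analytic on the strip and agrees with $\sigma^{\nu}_t(bb')$ on $\mathbb{R}$, so $bb'\in{\mathcal D}(\sigma^{\nu}_{i/2})$ with $\sigma^{\nu}_{i/2}(bb')=\sigma^{\nu}_{i/2}(b)\sigma^{\nu}_{i/2}(b')$, and anti-multiplicativity of $\gamma_B$ follows from that of $R$. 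Your route --- verify the identity on ${\mathcal T}_{\nu}$, then extend by a core/closedness argument --- is also correct, but the obstacle you flag as ``where the real work lies'' is self-inflicted: the direct argument makes the core machinery unnecessary. If you do keep your version, one small adjustment makes it cleaner: use the set of entire $\sigma^{\nu}$-analytic elements of $B$ rather than ${\mathcal T}_{\nu}$ itself as the approximating algebra. Gaussian smoothing $b_n=\sqrt{n/\pi}\int e^{-nt^2}\sigma^{\nu}_t(b)\,dt$ gives graph-norm approximants that are entire analytic (so the multiplicativity of $\sigma^{\nu}_{i/2}$ on them is immediate), whereas insisting the approximants also lie in ${\mathfrak N}_{\nu}\cap{\mathfrak N}_{\nu}^*$, as membership in ${\mathcal T}_{\nu}$ requires, adds a step that buys you nothing here.
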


\begin{proof}
(1). The injectivity is clear, since $R$ is an anti-isomorphism and $\sigma^{\nu}_t$ is an automorphism for all $t$. 
Since ${\mathcal D}(\sigma^{\nu}_{i/2})$ is dense in $B$, and since $\sigma^{\nu}_{i/2}$ is a closed map, 
we can see that $\gamma_B$ is a closed, densely-defined map from $B$ to $C$.  Meanwhile, by the property 
of the Tomita algebra, we know that ${\mathcal T}_{\nu}\subseteq
{\mathcal D}(\sigma^{\nu}_{i/2})$ and that $\sigma^{\nu}_{i/2}({\mathcal T}_{\nu})
={\mathcal T}_{\nu}$, which is dense in $B$ (see Section~1).  Since $R$ is an anti-isomorphism between $B$ and $C$, 
we see that $\operatorname{Ran}(\gamma_B)\supseteq (R\circ\sigma^{\nu}_{i/2})({\mathcal T}_{\nu})
=R({\mathcal T}_{\nu})$, which is dense in $C$.

(2). This is a re-formulation of Definition~\ref{separabilitytriple}\,(2).

(3). Suppose $b,b'\in{\mathcal D}(\sigma^{\nu}_{i/2})$.  They are analytic elements, and we have 
$bb'\in{\mathcal D}(\sigma^{\nu}_{i/2})$. It is clear that $\gamma_B$ is an anti-homomorphism, because 
$\sigma^{\nu}_{i/2}$ and $R$ an anti-isomorphism: $\gamma_B(bb')=\gamma_B(b')\gamma(b)$.
\end{proof}

Consider $\operatorname{Ran}(\gamma_B)$.  By Proposition~\ref{gammamap}\,(1), it is dense in $C$. 
In view of the results above about $\gamma_B$, it is clear that we can consider $\gamma_B^{-1}:
\operatorname{Ran}(\gamma_B)\to B$, which would be a closed, densely-defined map from $C$ 
to $B$, having dense range in $B$, injective, and anti-multiplicative.  In fact, we will have: 
$\gamma_B^{-1}=\sigma^{\nu}_{-i/2}\circ R^{-1}$, and ${\mathcal D}(\gamma_B^{-1})
=\operatorname{Ran}(\gamma_B)$, while $\operatorname{Ran}(\gamma_B^{-1})
={\mathcal D}(\gamma_B)$.

In the below is a property that relates the $C^*$-algebras $B$ and $C$, at the level of their dense subspaces 
${\mathcal D}(\gamma_B)\subseteq B$ and $\operatorname{Ran}(\gamma_B)\subseteq C$.

\begin{prop}\label{sepidgamma}
\begin{enumerate}
  \item For $b\in{\mathcal D}(\gamma_B)$, we have $E(b\otimes1)=E\bigl(1\otimes\gamma_B(b)\bigr)$.
  \item For $c\in\operatorname{Ran}(\gamma_B)={\mathcal D}(\gamma_B^{-1})$, we have $E(1\otimes c)
=E\bigl(\gamma_B^{-1}(c)\otimes1\bigr)$.
\end{enumerate}
\end{prop}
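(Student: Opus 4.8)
The plan is to prove~(1) by a slicing argument, reducing the multiplier-algebra identity $E(b\otimes1)=E\bigl(1\otimes\gamma_B(b)\bigr)$ to an equality of elements of $C$ that can be tested by applying $\nu\otimes\operatorname{id}$ after multiplying by a suitable element on the left of the first leg. First I would fix $b\in{\mathcal D}(\gamma_B)={\mathcal D}(\sigma^{\nu}_{i/2})$ and consider, for an arbitrary $b'\in{\mathcal D}(\sigma^{\nu}_{i/2})$, the expression $(\nu\otimes\operatorname{id})\bigl((b'\otimes1)E(b\otimes1)\bigr)$. On one hand, using the idempotent $E=E^*$ together with the KMS trace-type property of $\nu$ from Lemma~\ref{lemKMS}\,(2)--(3) and the defining condition Definition~\ref{separabilitytriple}\,(2), this should rewrite as $\gamma_B(b'b)$; on the other hand, pushing $b'$ through on the right via the same manipulations it should also equal $\gamma_B(b)$ multiplied appropriately, so that comparing the two yields $\gamma_B(bb')=\gamma_B(b')\gamma_B(b)$ already known from Proposition~\ref{gammamap}\,(3), and an extra identity encoding $E(b\otimes1)=E\bigl(1\otimes\gamma_B(b)\bigr)$. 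More cleanly: I would compute $(\nu\otimes\operatorname{id})\bigl(E(b\otimes1)(b''\otimes1)\bigr)$ for $b''\in{\mathcal D}(\sigma^{\nu}_{i/2})$, which by Proposition~\ref{gammamap}\,(2)--(3) equals $\gamma_B(bb'')=\gamma_B(b'')\gamma_B(b)$, and also compute $(\nu\otimes\operatorname{id})\bigl(E\bigl(1\otimes\gamma_B(b)\bigr)(b''\otimes1)\bigr)=(\nu\otimes\operatorname{id})\bigl(E(b''\otimes1)\bigr)\gamma_B(b)=\gamma_B(b'')\gamma_B(b)$, using that $1\otimes\gamma_B(b)$ commutes with $b''\otimes1$ and that $\nu\otimes\operatorname{id}$ is a right $C$-module map. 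Since the two agree for all $b''$ in a set whose $\gamma_B$-image is dense in $C$ (Proposition~\ref{gammamap}\,(1)), and $\nu$ is faithful, this forces $E(b\otimes1)=E\bigl(1\otimes\gamma_B(b)\bigr)$.

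The delicate point --- and what I expect to be the main obstacle --- is the justification that $\nu\otimes\operatorname{id}$ may legitimately be applied to each of these products and that the ``module map'' manipulations are valid at the multiplier level: one needs $E(b\otimes1)(b''\otimes1)=E(bb''\otimes1)\in\overline{\mathfrak M}_{\nu\otimes\operatorname{id}}$, which follows from Proposition~\ref{gammamap}\,(2) since $bb''\in{\mathcal D}(\gamma_B)$, and similarly the identity $(\nu\otimes\operatorname{id})\bigl(x(1\otimes c)\bigr)=\bigl((\nu\otimes\operatorname{id})(x)\bigr)c$ for $x\in\overline{\mathfrak M}_{\nu\otimes\operatorname{id}}$ and $c\in M(C)$, which is a standard property of the slice map recorded in Section~1 (the right-module property of $\psi\otimes\operatorname{id}$; cf. the discussion preceding Proposition~\ref{slice_lem1}). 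One must also check that the various elements genuinely lie in the domains claimed and that the strict topology is handled correctly when passing limits, but all of this is routine given the machinery of Section~1 on slicing with weights.

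Part~(2) I would then deduce formally from~(1). Given $c\in\operatorname{Ran}(\gamma_B)={\mathcal D}(\gamma_B^{-1})$, set $b:=\gamma_B^{-1}(c)\in{\mathcal D}(\gamma_B)$, so that $\gamma_B(b)=c$. Applying~(1) to this $b$ gives $E\bigl(\gamma_B^{-1}(c)\otimes1\bigr)=E(b\otimes1)=E\bigl(1\otimes\gamma_B(b)\bigr)=E(1\otimes c)$, which is exactly the asserted identity. Thus~(2) requires no further argument beyond the surjectivity-onto-its-range bookkeeping for $\gamma_B$ already established, and the whole proposition rests on the slicing computation for~(1).
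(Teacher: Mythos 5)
Your ``more cleanly'' computation is exactly the paper's proof: the paper applies the functionals $\nu(\,\cdot\,b')\otimes\operatorname{id}$ to both $E(b\otimes1)$ and $E\bigl(1\otimes\gamma_B(b)\bigr)$, obtains $\gamma_B(b')\gamma_B(b)$ in both cases via the anti-multiplicativity of $\gamma_B$ and the right-module property of the slice map, and concludes from the faithfulness of $\nu$ together with the density of ${\mathcal D}(\gamma_B)$ in $B$, with part~(2) following by the same substitution $c=\gamma_B(b)$ you use. The only small slip is in your closing justification: what is needed is that the elements $b''$ range over a dense subset of $B$ (so that the functionals $\nu(\,\cdot\,b'')$ separate the two multipliers), not that their $\gamma_B$-images be dense in $C$.
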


\begin{proof}
This is Proposition~2.6 of \cite{BJKVD_SepId}. Note that for $b,b'\in{\mathcal D}(\gamma_B)$, 
\begin{align}
\bigl(\nu(\cdot b')\otimes\operatorname{id}\bigr)\bigl(E(b\otimes1)\bigr)&=
(\nu\otimes\operatorname{id})\bigl(E(bb'\otimes1)\bigr)
=\gamma_B(bb')=\gamma_B(b')\gamma_B(b) \notag \\
&=(\nu\otimes\operatorname{id})\bigl(E(b'\otimes1)\bigr)\gamma_B(b)
=(\nu\otimes\operatorname{id})\bigl(E(b'\otimes\gamma_B(b))\bigr)  \notag \\
&=(\nu\otimes\operatorname{id})\bigl(E(1\otimes\gamma_B(b))(b'\otimes1)\bigr) \notag \\
&=\bigl(\nu(\cdot b')\otimes\operatorname{id}\bigr)\bigl(E(1\otimes\gamma_B(b))\bigr).  \notag 
\end{align}
Since $\nu$ is faithful, and since the result is true for all $b'\in{\mathcal D}(\gamma_B)$, 
which is dense in $B$, we conclude that $E(b\otimes1)=E\bigl(1\otimes\gamma_B(b)\bigr)$.
Second result is an immediate consequence, where $c=\gamma_B(b)$, $\gamma_B^{-1}(c)=b$.
\end{proof}

From $(B,\nu)$, using the ${}^*$-anti-isomorphism $R:B\to C$, we can define a faithful weight 
$\mu$ on $C$, by $\mu:=\nu\circ R^{-1}$.  It is not difficult to show that $\mu$ is also 
a faithful KMS weight on the $C^*$-algebra $C$, together with the one-parameter group of 
automorphisms $(\sigma^{\mu}_t)_{t\in\mathbb{R}}$, given by 
$\sigma^{\mu}_t:=R\circ\sigma^{\nu}_{-t}\circ R^{-1}$. 
The pair $(C,\mu)$ would behave a lot like $(B,\nu)$, and we obtain from this 
a densely-defined map $\gamma_C:C\to B$.

\begin{prop}\label{gamma'map}
Let $\mu$ be as above.  Then we have:
\begin{enumerate}
  \item $(\operatorname{id}\otimes\mu)(E)=1$.
  \item For $c\in{\mathcal D}(\sigma^{\mu}_{-i/2})$, write: $\gamma_C(c):=(R^{-1}\circ\sigma^{\mu}_{-i/2})(c)$. 
This defines a closed, densely-defined map from $C$ to $B$.  It is also an injective anti-homomorphism, having 
a dense range in $B$.
  \item For $c\in{\mathcal D}(\gamma_C)={\mathcal D}(\sigma^{\mu}_{-i/2})$, we have: $(1\otimes c)E
=\bigl(\gamma_C(c)\otimes1)E$.  Also, for $b\in\operatorname{Ran}(\gamma_C)={\mathcal D}({\gamma_C}^{-1})$, 
we have: $(b\otimes 1)E=\bigl(1\otimes{\gamma_C}^{-1}(b)\bigr)E$. 
  \item For $c\in{\mathcal D}(\gamma_C)$, we have: $(\operatorname{id}\otimes\mu)
\bigl((1\otimes c)E\bigr)=\gamma_C(c)$.
\end{enumerate}
\end{prop}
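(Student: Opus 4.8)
The plan is to reduce everything to part~(1): once $(\operatorname{id}\otimes\mu)(E)=1$ is in hand, parts~(2)--(4) follow by essentially the same bookkeeping used for $\gamma_B$ in Propositions~\ref{gammamap} and~\ref{sepidgamma}. Throughout one uses, as recalled just before the statement, that $\mu=\nu\circ R^{-1}$ is a faithful KMS weight on $C$ with modular group $\sigma^{\mu}_t=R\circ\sigma^{\nu}_{-t}\circ R^{-1}$; continuing this analytically, $\sigma^{\mu}_{-i/2}=R\circ\sigma^{\nu}_{i/2}\circ R^{-1}$ (the sign twist appearing because $R$ is anti-multiplicative), so that $\gamma_C=R^{-1}\circ\sigma^{\mu}_{-i/2}=\sigma^{\nu}_{i/2}\circ R^{-1}$ as a map $C\to B$.

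For part~(1), I would argue as follows. Since $\mu=\nu\circ R^{-1}$, the assertion $(\operatorname{id}\otimes\mu)(E)=1$ is the same as $(\operatorname{id}\otimes\nu)(T)=1$, where $T:=(\operatorname{id}\otimes R^{-1})(E)\in M(B\otimes B)^{+}$ (positive, since $E\ge0$ and $R^{-1}$ is a ${}^*$-anti-isomorphism); meanwhile Definition~\ref{separabilitytriple}(1) already gives the opposite slice, $(\nu\otimes\operatorname{id})(T)=R^{-1}\bigl((\nu\otimes\operatorname{id})(E)\bigr)=1$. To pass between the two slices of $T$, I would invoke the symmetry of the separability idempotent established in \cite{BJKVD_SepId}: that $E$ is invariant under applying $R$ to one leg, $R^{-1}$ to the other, and then interchanging the legs. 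Combined with $\mu\circ R=\nu$, this collapses $(\operatorname{id}\otimes\mu)(E)=1$ directly onto Definition~\ref{separabilitytriple}(1). The same reduction can be carried out without leaving the $C^{*}$-level, using the slicing results of Section~1: by Proposition~\ref{slice_lem1} and its converse, $(\operatorname{id}\otimes\mu)(E)=1$ reduces to the scalar identity $\mu\bigl((\omega\otimes\operatorname{id})(E)\bigr)=\omega(1)$ for all $\omega\in B^{*}_{+}$, which is again a statement about $T$ that the symmetry of $E$ settles. I expect this step --- locating and applying the symmetry of $E$ --- to be the one real obstacle.

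For part~(3), rather than repeat the computation of Proposition~\ref{sepidgamma} I would take adjoints in Proposition~\ref{sepidgamma}(2). From $E(1\otimes c)=E\bigl(\gamma_B^{-1}(c)\otimes1\bigr)$ and $E^{*}=E$ one obtains $(1\otimes c^{*})E=\bigl(\gamma_B^{-1}(c)^{*}\otimes1\bigr)E$; writing $d=c^{*}$ and using $\gamma_B^{-1}=\sigma^{\nu}_{-i/2}\circ R^{-1}$, the standard relation $\sigma^{\nu}_z(x)^{*}=\sigma^{\nu}_{\bar z}(x^{*})$, and the ${}^*$-compatibility of $R^{-1}$, one computes $\gamma_B^{-1}(c)^{*}=\sigma^{\nu}_{i/2}\bigl(R^{-1}(d)\bigr)=\gamma_C(d)$, while $d$ runs over $\bigl(\operatorname{Ran}(\gamma_B)\bigr)^{*}=R\bigl(\mathcal{D}(\sigma^{\nu}_{i/2})\bigr)=\mathcal{D}(\gamma_C)$. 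This yields precisely $(1\otimes c)E=\bigl(\gamma_C(c)\otimes1\bigr)E$ on $\mathcal{D}(\gamma_C)$, and the second identity in~(3) is the special case $c=\gamma_C^{-1}(b)$. Granting (1) and (3), part~(4) is then immediate: for $c\in\mathcal{D}(\gamma_C)$ one has $(\operatorname{id}\otimes\mu)\bigl((1\otimes c)E\bigr)=(\operatorname{id}\otimes\mu)\bigl((\gamma_C(c)\otimes1)E\bigr)=\gamma_C(c)$, by (3) and then (1), modulo the routine check that $(\gamma_C(c)\otimes1)E$ lies in the domain of $\operatorname{id}\otimes\mu$.

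For part~(2), finally, $\gamma_C=R^{-1}\circ\sigma^{\mu}_{-i/2}$ is injective (a composite of injective maps); it is closed and densely defined because $\sigma^{\mu}_{-i/2}$ is, its domain containing the Tomita algebra $\mathcal{T}_{\mu}$; its range is dense in $B$ because it contains $R^{-1}(\mathcal{T}_{\mu})$; and it is an anti-homomorphism on its (multiplicatively closed) domain, since $\sigma^{\mu}_{-i/2}$ carries analytic products to products while $R^{-1}$ reverses multiplication. This is verbatim the argument for Proposition~\ref{gammamap}(1),(3), now run with $(C,\mu)$ in place of $(B,\nu)$.
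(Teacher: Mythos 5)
Parts (2)--(4) of your proposal are essentially correct and close to the paper's own treatment: (2) is the $\gamma_B$ argument of Proposition~\ref{gammamap} run for the pair $(C,\mu)$; your derivation of (3) by taking adjoints in Proposition~\ref{sepidgamma}\,(2) is a clean variant of what the paper does (it redoes the $\nu(\,\cdot\,b')$ computation instead), and your identity $\gamma_B^{-1}(c)^*=\gamma_C(c^*)$ together with the domain bookkeeping $\bigl(\operatorname{Ran}\gamma_B\bigr)^*=R\bigl({\mathcal D}(\sigma^{\nu}_{i/2})\bigr)={\mathcal D}(\gamma_C)$ checks out (compare Proposition~\ref{gamma*}); and (4) is verbatim the paper's proof.

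The genuine gap is in part (1), precisely at the step you flag as ``the one real obstacle.'' The symmetry you want to invoke, $(R\otimes R^{-1})(E)=\sigma E$, is Proposition~\ref{sigmaE}\,(2), which in this paper is established \emph{after} the present proposition: it is deduced from $(\gamma_B\otimes\gamma_C)(E)=\sigma E$, which rests on Lemma~\ref{thelemma}, whose proof explicitly uses the properties of $\gamma_C$ being established here. So within this development your reduction is circular, and appealing to \cite{BJKVD_SepId} does not discharge the obligation unless you verify that the symmetry is proved there independently of $(\operatorname{id}\otimes\mu)(E)=1$. The fix is more elementary and needs no symmetry at all: test $(\theta\otimes\operatorname{id})(E)$ against the functionals $\theta=\nu(\,\cdot\,b)$ with $b\in{\mathcal D}(\gamma_B)$, which span a dense subspace of $B^*$. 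For such $\theta$, Definition~\ref{separabilitytriple}\,(2) gives
$$
(\theta\otimes\operatorname{id})(E)=(\nu\otimes\operatorname{id})\bigl(E(b\otimes1)\bigr)
=\gamma_B(b)=R\bigl(\sigma^{\nu}_{i/2}(b)\bigr),
$$
and hence, using $\mu=\nu\circ R^{-1}$ and the (analytically continued) $\sigma^{\nu}$-invariance of $\nu$,
$$
\mu\bigl((\theta\otimes\operatorname{id})(E)\bigr)=\nu\bigl(\sigma^{\nu}_{i/2}(b)\bigr)=\nu(b)=\theta(1),
$$
which is exactly the statement $(\operatorname{id}\otimes\mu)(E)=1$. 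This is the paper's proof of (1); your detour through the leg-swap symmetry of $E$ is both unnecessary and, at this point in the development, unavailable.
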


\begin{proof}
These results are Propositions~3.1 and 2.7 in \cite{BJKVD_SepId}. Let us give a slightly different proof:

(1). As before, the equation means that $(\theta\otimes\operatorname{id})(E)\in\overline{\mathfrak M}_{\mu}$ 
for all $\theta\in B^*$ (extended to the multiplier algebra level), and that 
$\mu\bigl((\theta\otimes\operatorname{id})(E)\bigr)=\theta(1)$.  We can verify this for 
$\theta=\nu(\,\cdot\,b)$, for $b\in{\mathcal D}(\gamma_B)$. Such functionals are dense in $B^*$. Using the fact that
$\mu=\nu\circ R^{-1}$ and that $\nu$ is $\sigma^{\nu}$-invariant, we have: 
\begin{align}
\mu\bigl((\theta\otimes\operatorname{id})(E)\bigr)&=\mu\bigl((\nu\otimes\operatorname{id})
(E(b\otimes1))\bigr)=\mu\bigl(\gamma_B(b)\bigr)  \notag \\
&=\mu\bigl((R\circ\sigma^{\nu}_{i/2})(b)\bigr)=\nu(b)=\theta(1).
\notag
\end{align}

(2). From $(\nu\otimes\operatorname{id})\bigl(E(b\otimes1)\bigr)=R\bigl(\sigma^{\nu}_{i/2}(b)\bigr)$, 
$b\in{\mathcal D}(\gamma_B)={\mathcal D}(\sigma^{\nu}_{i/2})$, take the adjoint.  Since $R$ is a ${}^*$-anti-isomorphism, 
we have:
$$
(\nu\otimes\operatorname{id})\bigl((b^*\otimes1)E\bigr)=\bigl[R(\sigma^{\nu}_{i/2}(b))\bigr]^*
=R\bigl([\sigma^{\nu}_{i/2}(b)]^*\bigr)=R\bigl(\sigma^{\nu}_{-i/2}(b^*)\bigr).
$$
In other words, for $y\in{\mathcal D}(\gamma)^*={\mathcal D}(\sigma^{\nu}_{-i/2})$, which is also dense in $B$, 
the expression $(\nu\otimes\operatorname{id})\bigl((y\otimes1)E\bigr)$ is valid and 
$(\nu\otimes\operatorname{id})\bigl((y\otimes1)E\bigr)=(R\circ\sigma^{\nu}_{-i/2})(y)$. 
Or, put another way, we have: 
$$
\nu\bigl(y(\operatorname{id}\otimes\omega)(E)\bigr)
=\omega\bigr((R\circ\sigma^{\nu}_{-i/2})(y)\bigr),\quad{\text { for $\omega\in C^*$, 
$y\in D(\sigma^{\nu}_{-i/2})$.}}
$$
So, by the same argument as in the case of $\gamma_B$, the map 
$y\mapsto(R\circ\sigma^{\nu}_{-i/2})(y)$ is closed, densely-defined on $B$, injective, and has a dense 
range in $C$.  Let us define $\gamma_C$ to be its inverse map, namely, $\gamma_C:
c\to(\sigma^{\nu}_{i/2}\circ R^{-1})(c)=(R^{-1}\circ\sigma^{\mu}_{-i/2})(c)$. 
It is clear that $\gamma_C$ is closed, densely-defined in $C$, injective, has a dense range in $B$, 
as well as anti-multiplicative.

(3). The proof is essentially the same as in that of Proposition~\ref{sepidgamma}.  The anti-homorphism 
property of $\gamma_C$ is needed.

(4). Let $c\in D(\gamma_C)$.  Then using $(\operatorname{id}\otimes\mu)(E)=1$, we have:
$$
(\operatorname{id}\otimes\mu)\bigl((1\otimes c)E\bigr)=(\operatorname{id}\otimes\mu)\bigl((\gamma'(c)
\otimes 1)E\bigr)=\gamma'(c).
$$
\end{proof}

Here are some formulas relating the weights $\nu$, $\mu$, and the maps $\gamma_B$, $\gamma_C$.

\begin{prop}\label{gammagamma'formuals}
\begin{enumerate}
  \item We have:
\begin{alignat}{2}
&\gamma_B=R\circ\sigma^{\nu}_{i/2}=\sigma^{\mu}_{-i/2}\circ R &\qquad
&\gamma_B^{-1}=\sigma^{\nu}_{-i/2}\circ R^{-1}=R^{-1}\circ\sigma^{\mu}_{i/2} \notag \\
&\gamma_C=R^{-1}\circ\sigma^{\mu}_{-i/2}=\sigma^{\nu}_{i/2}\circ R^{-1} &
&\gamma_C^{-1}=\sigma^{\mu}_{i/2}\circ R=R\circ\sigma^{\nu}_{-i/2}  \notag
\end{alignat}
  \item $\mu=\nu\circ R^{-1}=\nu\circ\gamma_C=\nu\circ\gamma_B^{-1}$ and $\nu=\mu\circ R=\mu\circ\gamma_B=\mu\circ\gamma_C^{-1}$.
\end{enumerate}
\end{prop}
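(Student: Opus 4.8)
The plan is to treat part~(1) as a formal manipulation of the definitions, requiring only one analytic-continuation input. Recall that $\gamma_B = R\circ\sigma^{\nu}_{i/2}$ and $\gamma_C = R^{-1}\circ\sigma^{\mu}_{-i/2}$ hold by definition, and that $\sigma^{\mu}_t = R\circ\sigma^{\nu}_{-t}\circ R^{-1}$ for real $t$ has already been recorded. First I would upgrade this last identity to complex parameter: since $R$ is an isometric ${}^*$-anti-isomorphism, it carries $\sigma^{\nu}$-analytic elements bijectively onto $\sigma^{\mu}$-analytic elements and intertwines the analytic extensions, the sign change $t\mapsto -t$ being irrelevant to analyticity; hence $\sigma^{\mu}_z = R\circ\sigma^{\nu}_{-z}\circ R^{-1}$ for every $z\in\mathbb{C}$, with $R$ matching up the corresponding domains. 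In particular $\sigma^{\mu}_{\pm i/2} = R\circ\sigma^{\nu}_{\mp i/2}\circ R^{-1}$, and by inverting, $\sigma^{\nu}_{\pm i/2} = R^{-1}\circ\sigma^{\mu}_{\mp i/2}\circ R$.

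Granting this, the identities in~(1) follow by substitution. From $\gamma_B = R\circ\sigma^{\nu}_{i/2}$ one gets $\sigma^{\mu}_{-i/2}\circ R = (R\circ\sigma^{\nu}_{i/2}\circ R^{-1})\circ R = R\circ\sigma^{\nu}_{i/2} = \gamma_B$; since $\gamma_B$ is a composition of injective maps with dense range, its inverse is $\gamma_B^{-1} = \sigma^{\nu}_{-i/2}\circ R^{-1}$, and substituting once more this becomes $R^{-1}\circ\sigma^{\mu}_{i/2}$. The two expressions for $\gamma_C$ arise the same way from $\gamma_C = R^{-1}\circ\sigma^{\mu}_{-i/2} = \sigma^{\nu}_{i/2}\circ R^{-1}$ (consistent with Proposition~\ref{gamma'map}), and inverting gives $\gamma_C^{-1} = \sigma^{\mu}_{i/2}\circ R = R\circ\sigma^{\nu}_{-i/2}$. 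This settles part~(1).

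For part~(2), the equalities $\mu = \nu\circ R^{-1}$ and $\nu = \mu\circ R$ are just the definition of $\mu$ and its immediate consequence. For the remaining four I would combine part~(1) with the invariance of each weight under its own modular group: $\nu\circ\sigma^{\nu}_t = \nu$ for real $t$ propagates to imaginary time on analytic elements in the usual way --- the function $z\mapsto\nu\bigl(\sigma^{\nu}_z(x)\bigr)$ is analytic and constant on $\mathbb{R}$, hence constant --- so that $\nu\circ\sigma^{\nu}_{\pm i/2} = \nu$ and, likewise, $\mu\circ\sigma^{\mu}_{\pm i/2} = \mu$, wherever the left-hand sides are defined. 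Then $\nu\circ\gamma_C = \nu\circ\sigma^{\nu}_{i/2}\circ R^{-1} = \nu\circ R^{-1} = \mu$, and $\nu\circ\gamma_B^{-1} = \nu\circ\sigma^{\nu}_{-i/2}\circ R^{-1} = \mu$; similarly $\mu\circ\gamma_B = \mu\circ\sigma^{\mu}_{-i/2}\circ R = \mu\circ R = \nu$ and $\mu\circ\gamma_C^{-1} = \mu\circ\sigma^{\mu}_{i/2}\circ R = \nu$.

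The one point that calls for care --- the ``main obstacle'', mild as it is --- is the domain bookkeeping underlying these formal steps: one must check that conjugation by $R$ genuinely interchanges $\mathcal{D}(\sigma^{\nu}_{i/2})$ and $\mathcal{D}(\sigma^{\mu}_{-i/2})$ (and likewise for the other generators), and that the passage of weight-invariance to imaginary time is legitimate on exactly the elements being evaluated. Both are handled by observing that everything may be tested on the Tomita algebra $\mathcal{T}_{\nu}$, which $R$ carries onto $\mathcal{T}_{\mu}$ and on which all the $\sigma^{\nu}_z$ and $\sigma^{\mu}_z$ are everywhere defined with dense range (Section~1); closedness and density then extend each identity to the stated domains. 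I would also note that these formulas appear, obtained by a somewhat different route, among the results of \cite{BJKVD_SepId}.
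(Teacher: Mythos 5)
Your proposal is correct and follows essentially the same route as the paper: part (1) by substituting the intertwining relation $\sigma^{\mu}_z=R\circ\sigma^{\nu}_{-z}\circ R^{-1}$ into the definitions of $\gamma_B$ and $\gamma_C$, and part (2) by combining these characterizations with the $\sigma^{\nu}$- and $\sigma^{\mu}$-invariance of $\nu$ and $\mu$. The only difference is that you spell out the analytic continuation and the domain bookkeeping on the Tomita algebras, which the paper leaves implicit.
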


\begin{proof}
The definitions have already appeared, and we only need to use $\sigma^{\mu}_t
=R\circ\sigma^{\nu}_{-t}\circ R^{-1}$ for the alternative descriptions. For (2), use one of the characterizations 
in (1) together with the invariance of $\nu$ and $\mu$ under $\sigma^{\nu}$ and $\sigma^{\mu}$.
\end{proof}

The next result shows the relationship between the ${}^*$-structure and the maps $\gamma_B$, 
$\gamma_C$.  Observe that in general, the maps $\gamma_B$, $\gamma_C$ need not be ${}^*$-maps.

\begin{prop}\label{gamma*}
\begin{enumerate}
  \item For $b\in{\mathcal D}(\gamma_B)$, we have: $\gamma_C\bigl(\gamma_B(b)^*\bigr)^*=b$.
  \item For $c\in{\mathcal D}(\gamma_C)$, we have $\gamma_B\bigl(\gamma_C(c)^*\bigr)^*=c$.
\end{enumerate}
\end{prop}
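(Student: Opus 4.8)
The plan is to reduce both identities to the explicit descriptions of $\gamma_B$ and $\gamma_C$ recorded in Proposition~\ref{gammagamma'formuals}, namely $\gamma_B=R\circ\sigma^{\nu}_{i/2}$ and $\gamma_C=\sigma^{\nu}_{i/2}\circ R^{-1}$, combined with two elementary facts. First, since $R$ is a ${}^*$-anti-isomorphism, $R(x^*)=R(x)^*$ and $R^{-1}(y^*)=R^{-1}(y)^*$. Second, since $(\sigma^{\nu}_t)$ is a norm-continuous one-parameter group of ${}^*$-automorphisms, its analytic generators satisfy $\sigma^{\nu}_z(a)^*=\sigma^{\nu}_{\bar z}(a^*)$; in particular $a\in{\mathcal D}(\sigma^{\nu}_{i/2})$ implies $a^*\in{\mathcal D}(\sigma^{\nu}_{-i/2})$ with $\sigma^{\nu}_{i/2}(a)^*=\sigma^{\nu}_{-i/2}(a^*)$, and moreover $\sigma^{\nu}_{-i/2}$ carries ${\mathcal D}(\sigma^{\nu}_{-i/2})$ onto ${\mathcal D}(\sigma^{\nu}_{i/2})$ with $\sigma^{\nu}_{i/2}\circ\sigma^{\nu}_{-i/2}=\operatorname{id}$ there.

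For part (1), let $b\in{\mathcal D}(\gamma_B)={\mathcal D}(\sigma^{\nu}_{i/2})$. First I would compute, using the two facts above,
$$
\gamma_B(b)^*=\bigl(R(\sigma^{\nu}_{i/2}(b))\bigr)^*=R\bigl(\sigma^{\nu}_{i/2}(b)^*\bigr)=R\bigl(\sigma^{\nu}_{-i/2}(b^*)\bigr),
$$
where $b^*\in{\mathcal D}(\sigma^{\nu}_{-i/2})$ so that the right-hand side is meaningful and, in fact, $\sigma^{\nu}_{-i/2}(b^*)\in{\mathcal D}(\sigma^{\nu}_{i/2})$. This last remark is exactly the statement that $\gamma_B(b)^*\in{\mathcal D}(\gamma_C)$, since $\gamma_C=\sigma^{\nu}_{i/2}\circ R^{-1}$ and $R^{-1}\bigl(\gamma_B(b)^*\bigr)=\sigma^{\nu}_{-i/2}(b^*)$. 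Applying $\gamma_C$ and then using $\sigma^{\nu}_{i/2}\circ\sigma^{\nu}_{-i/2}=\operatorname{id}$,
$$
\gamma_C\bigl(\gamma_B(b)^*\bigr)=\sigma^{\nu}_{i/2}\bigl(R^{-1}(R(\sigma^{\nu}_{-i/2}(b^*)))\bigr)=\sigma^{\nu}_{i/2}\bigl(\sigma^{\nu}_{-i/2}(b^*)\bigr)=b^*,
$$
and taking adjoints gives $\gamma_C\bigl(\gamma_B(b)^*\bigr)^*=b$, as desired.

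For part (2), I would run the identical argument with the roles of $(B,\nu,R)$ and $(C,\mu,R^{-1})$ interchanged, recalling that $(C,\mu)$ plays the same structural role as $(B,\nu)$: here $\mu=\nu\circ R^{-1}$ is a faithful KMS weight with $\sigma^{\mu}_t=R\circ\sigma^{\nu}_{-t}\circ R^{-1}$, and by Proposition~\ref{gammagamma'formuals} we have $\gamma_C=R^{-1}\circ\sigma^{\mu}_{-i/2}$ and $\gamma_B=\sigma^{\mu}_{-i/2}\circ R$. Thus, for $c\in{\mathcal D}(\gamma_C)={\mathcal D}(\sigma^{\mu}_{-i/2})$ one obtains $\gamma_C(c)^*=R^{-1}\bigl(\sigma^{\mu}_{i/2}(c^*)\bigr)$ with $\sigma^{\mu}_{i/2}(c^*)\in{\mathcal D}(\sigma^{\mu}_{-i/2})$, hence $\gamma_C(c)^*\in{\mathcal D}(\gamma_B)$, and then $\gamma_B\bigl(\gamma_C(c)^*\bigr)=\sigma^{\mu}_{-i/2}\bigl(\sigma^{\mu}_{i/2}(c^*)\bigr)=c^*$, so $\gamma_B\bigl(\gamma_C(c)^*\bigr)^*=c$.

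The computational content is light; the only point that requires genuine care — and the place I expect to be the main obstacle — is the domain bookkeeping for the analytic generators $\sigma^{\nu}_{\pm i/2}$ and $\sigma^{\mu}_{\pm i/2}$: one must keep straight that taking adjoints interchanges ${\mathcal D}(\sigma_{i/2})$ and ${\mathcal D}(\sigma_{-i/2})$, that $\sigma_{i/2}$ carries its domain onto ${\mathcal D}(\sigma_{-i/2})$, and that the composites $\sigma_{i/2}\circ\sigma_{-i/2}$ and $\sigma_{-i/2}\circ\sigma_{i/2}$ act as the identity on the appropriate domains. These are standard facts about analytic generators of one-parameter automorphism groups (cf. the discussion in Section~1); once they are invoked, each identity becomes a one-line composition of the formulas in Proposition~\ref{gammagamma'formuals} with the ${}^*$-anti-multiplicativity of $R$.
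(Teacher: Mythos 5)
Your argument is correct, and the domain bookkeeping you flag is handled properly: the standard facts that $\sigma^{\nu}_{z}(a)^*=\sigma^{\nu}_{\bar z}(a^*)$ (so adjoints swap ${\mathcal D}(\sigma^{\nu}_{i/2})$ and ${\mathcal D}(\sigma^{\nu}_{-i/2})$) and that $\sigma^{\nu}_{-i/2}$ is the inverse of $\sigma^{\nu}_{i/2}$ as a closed map are exactly what is needed, and the paper itself already invokes the first of these in the proof of Proposition~\ref{gamma'map}\,(2). However, your route is not the one the paper takes. The paper (deferring to Proposition~2.8 of \cite{BJKVD_SepId}) derives the identity from the separability idempotent: starting from $E(b\otimes1)=E\bigl(1\otimes\gamma_B(b)\bigr)$ (Proposition~\ref{sepidgamma}), one takes adjoints to get $(b^*\otimes1)E=\bigl(1\otimes\gamma_B(b)^*\bigr)E$, then applies the relation $(1\otimes c)E=\bigl(\gamma_C(c)\otimes1\bigr)E$ from Proposition~\ref{gamma'map}\,(3) and the injectivity statements of Proposition~\ref{Efull}\,(4) to conclude $b^*=\gamma_C\bigl(\gamma_B(b)^*\bigr)$. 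Your proof instead works entirely with the explicit descriptions $\gamma_B=R\circ\sigma^{\nu}_{i/2}$ and $\gamma_C=\sigma^{\nu}_{i/2}\circ R^{-1}$ from Proposition~\ref{gammagamma'formuals} and never touches $E$; it is more elementary and makes the statement transparent as a one-line composition. The trade-off is that the paper's argument runs directly off the multiplier relations satisfied by $E$, which is the form in which these maps are actually used downstream and which generalizes to settings where one prefers not to unwind $\gamma_B$, $\gamma_C$ into analytic generators; your version leans on the general theory of analytic continuation of one-parameter automorphism groups instead. Both are sound, and since Proposition~\ref{gammagamma'formuals} is already on record, nothing in your proof is circular.
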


\begin{proof}
This is Proposition~2.8 in \cite{BJKVD_SepId}. The proof is not difficult, using 
Propositions~\ref{sepidgamma} and \ref{gamma'map} above.
\end{proof}


Next proposition collects some results relating our separability idempotent $E$ with the algebras 
$B$ and $C$.  We skip the proof: One uses the properties of the maps $\gamma_B$, $\gamma_B^{-1}$, 
$\gamma_C$, $\gamma_C^{-1}$ obtained above, including the fact that they have dense domains and 
dense ranges.  For details, refer to \cite{BJKVD_SepId}.

\begin{prop}\label{Efull}
\begin{enumerate}
  \item For all $b\in B$ and all $c\in C$, we have:
$$
E(1\otimes c)\in B\otimes C,\quad (1\otimes c)E\in B\otimes C,\quad
(b\otimes 1)E\in B\otimes C,\quad E(b\otimes 1)\in B\otimes C.
$$
  \item As a consequence of (1), we have:
$$
(\operatorname{id}\otimes\omega)(E)\in B,\quad\forall\omega\in C^*,\quad
{\text { and }}\quad
(\theta\otimes\operatorname{id})(E)\in C,\quad\forall\theta\in B^*.
$$
  \item The separability idempotent $E$ is ``full'', in the sense that

$\bigl\{(\theta\otimes\operatorname{id})(E(b\otimes1)):b\in B,\theta\in B^*\bigr\}$ is dense in $C$,

$\bigl\{(\theta\otimes\operatorname{id})((b\otimes1)E):b\in B,\theta\in B^*\bigr\}$ is dense in $C$,

$\bigl\{(\operatorname{id}\otimes\omega)((1\otimes c)E):c\in C,\omega\in C^*\bigr\}$ is dense in $B$,

$\bigl\{(\operatorname{id}\otimes\omega)(E(1\otimes c)):c\in C,\omega\in C^*\bigr\}$ is dense in $B$.
  \item 
If $(1\otimes c)E=0$, $c\in C$, then necessarily $c=0$.

\noindent If $E(1\otimes c)=0$, $c\in C$, then necessarily $c=0$.

\noindent If $E(b\otimes 1)=0$, $b\in B$, then necessarily $b=0$.

\noindent If $(b\otimes 1)E=0$, $b\in B$, then necessarily $b=0$.
\end{enumerate}
\end{prop}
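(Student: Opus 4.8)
The plan is to treat the four assertions in turn: (3) and (4) can be extracted directly from the $\gamma$-machinery of Section~2, (1) carries the genuine analytic content, and (2) is then a formal corollary of (1).

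For (4) I would use that condition~(1) of Definition~\ref{separabilitytriple} says precisely $E\in\overline{\mathfrak M}_{\nu\otimes\operatorname{id}}^+$ with $(\nu\otimes\operatorname{id})(E)=1$, i.e.\ that the net $\bigl((\theta\otimes\operatorname{id})(E)\bigr)_{\theta\in{\mathcal G}_\nu}$ converges strictly to $1$ in $M(C)$, and dually, by Proposition~\ref{gamma'map}(1), that $\bigl((\operatorname{id}\otimes\theta)(E)\bigr)_{\theta\in{\mathcal G}_\mu}\to1$ strictly in $M(B)$. If $(1\otimes c)E=0$, then $c\,(\theta\otimes\operatorname{id})(E)=(\theta\otimes\operatorname{id})\bigl((1\otimes c)E\bigr)=0$ for all $\theta\in{\mathcal G}_\nu$, and passing to the strict limit gives $c=0$; the other three implications are identical, using $\operatorname{id}\otimes\theta$ with $\theta\in{\mathcal G}_\mu$ in the cases involving $b\otimes1$. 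For (3) I would exhibit a norm-dense subset of $C$ (resp.\ of $B$) inside each of the four sets. For $b,b'\in{\mathcal T}_\nu$ the functional $\theta:=\nu(\,\cdot\,b')$ lies in $B^*$ and, using Proposition~\ref{gammamap} and $bb'\in{\mathcal T}_\nu\subseteq{\mathcal D}(\gamma_B)$,
$(\theta\otimes\operatorname{id})\bigl(E(b\otimes1)\bigr)=(\nu\otimes\operatorname{id})\bigl(E(bb'\otimes1)\bigr)=\gamma_B(bb')=\gamma_B(b')\gamma_B(b)$;
since $\gamma_B({\mathcal T}_\nu)=R({\mathcal T}_\nu)$ is a dense $*$-subalgebra of $C$, these products span a dense subspace of $C$, which proves the first assertion of (3). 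The second follows in the same way with $\theta=\nu(b'\,\cdot\,)$ and the identity $(\nu\otimes\operatorname{id})\bigl((y\otimes1)E\bigr)=\gamma_C^{-1}(y)$ from the proof of Proposition~\ref{gamma'map}(2), and the last two by the symmetric computations with $\mu$, $\gamma_C$ and Proposition~\ref{gamma'map}(4).

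For (1) I would first reduce the four inclusions to one. For $b\in{\mathcal D}(\gamma_B)$ we have $E(b\otimes1)=E\bigl(1\otimes\gamma_B(b)\bigr)$ by Proposition~\ref{sepidgamma}(1); hence, once $E(b\otimes1)\in B\otimes C$ is known for $b$ in the dense set ${\mathcal D}(\gamma_B)$, norm-continuity of $b\mapsto E(b\otimes1)$ extends it to all $b\in B$, the substitution $c=\gamma_B(b)$ together with density of $\operatorname{Ran}(\gamma_B)$ in $C$ gives $E(1\otimes c)\in B\otimes C$ for all $c$, and the inclusions for $(b\otimes1)E$ and $(1\otimes c)E$ then follow by taking adjoints since $E=E^*$. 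One can reduce still further: with $X:=E(b\otimes1)$ one has $XX^*=E(bb^*\otimes1)E$, and from $X=\lim_{\varepsilon\to0}(XX^*)^{1/2}\bigl((XX^*)^{1/2}+\varepsilon\bigr)^{-1}X$ in norm — the approximants lying in $(B\otimes C)\,M(B\otimes C)\subseteq B\otimes C$ as soon as $XX^*\in B\otimes C$ — it suffices to prove $EaE\in B\otimes C$ for $a$ in a dense subset of $B$.

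The main obstacle is this last claim: proving $EaE\in B\otimes C$ cannot be done at the level of the identities of Section~2. One readily checks $EaE\in\overline{\mathfrak M}_{\nu\otimes\operatorname{id}}$ — since $E\in\overline{\mathfrak N}_{\nu\otimes\operatorname{id}}$ and the latter is a left ideal in $M(B\otimes C)$ — but this only locates it in the multiplier algebra, and distinguishing membership in $B\otimes C$ from membership in $M(B\otimes C)$ requires the finer structure theory of separability idempotents. Here I would invoke \cite{BJKVD_SepId}: in particular the postliminality of $B$ noted after Proposition~\ref{E_unique}, which yields a concrete normal form for $E$ from which all of (1) is visible. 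Granting (1), assertion (2) is immediate: by the Cohen--Hewitt factorization theorem every $\omega\in C^*$ can be written $\omega=\omega'(\,\cdot\,c)$ with $\omega'\in C^*$ and $c\in C$, so $(\operatorname{id}\otimes\omega)(E)=(\operatorname{id}\otimes\omega')\bigl(E(1\otimes c)\bigr)\in B$; symmetrically $(\theta\otimes\operatorname{id})(E)\in C$, using $\theta=\theta'(b\,\cdot\,)$ and $(b\otimes1)E\in B\otimes C$.
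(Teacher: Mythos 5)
The paper does not actually prove this proposition: the text above it says the proof is skipped, and the ``proof'' consists of a citation to Propositions~3.3--3.5 of \cite{BJKVD_SepId}, with only the hint that one uses the maps $\gamma_B,\gamma_B^{-1},\gamma_C,\gamma_C^{-1}$ and their dense domains and ranges. Measured against that, your proposal supplies genuine arguments for most of the statement, and where it argues it is correct. Your proof of (4) via strict convergence of the net $\bigl((\theta\otimes\operatorname{id})(E)\bigr)_{\theta\in\mathcal G_\nu}$ to $(\nu\otimes\operatorname{id})(E)=1$ (and its $\mu$-counterpart) is clean and valid, and arguably more elementary than the $\gamma$-map route the paper gestures at. Your proof of (3) is exactly in the spirit of the paper's hint: slicing against $\nu(\,\cdot\,b')$ produces $\gamma_B(b')\gamma_B(b)$, and since $\gamma_B(\mathcal T_\nu)=R(\mathcal T_\nu)$ is a dense ${}^*$-subalgebra of $C$ and $\operatorname{span}(D\cdot D)$ is dense for any dense ${}^*$-subalgebra $D$ of a $C^*$-algebra, these products span a dense subspace. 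The deduction of (2) from (1) by Cohen factorization of functionals is also fine, and your reductions within (1) (restricting to $b\in\mathcal D(\gamma_B)$, passing between the four inclusions via Proposition~\ref{sepidgamma} and adjoints, and the $(XX^*)^{1/2}\bigl((XX^*)^{1/2}+\varepsilon\bigr)^{-1}X$ trick) are all valid.

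The one genuine issue is the final step of (1), which you correctly identify as the analytic core: showing $E(b\otimes1)$ lies in $B\otimes C$ rather than merely in $M(B\otimes C)$. You defer this to \cite{BJKVD_SepId} --- which is no worse than what the paper itself does --- but your specific pointer is suspect. You propose to invoke the postliminality of $B$ and a resulting ``normal form'' for $E$; however, as the paper itself notes, postliminality is established in Section~6 of \cite{BJKVD_SepId}, downstream of the Propositions~3.3--3.5 being cited here, so it cannot be the mechanism behind this result without circularity. The intended argument is the one hinted at in the paper: it proceeds from the $\gamma$-machinery itself (the identities of Propositions~\ref{sepidgamma} and \ref{gamma'map} together with the density of the domains and ranges of $\gamma_B^{\pm1},\gamma_C^{\pm1}$), not from structure theory that is itself a consequence of fullness. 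So for part (1) your proposal, like the paper, ultimately rests on a citation --- but one aimed at the wrong place in the cited paper.
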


\begin{proof}
See Proposition~3.3, Proposition~3.4, Proposition~3.5 of \cite{BJKVD_SepId}.
\end{proof}

\begin{rem}
The ``fullness'' of $E\in M(B\otimes C)$, as given in (3) of Proposition~\ref{Efull}, means that the 
left leg of $E$ is $B$ and the right leg of $E$ is $C$.  In the purely algebraic setting, the fullness 
of $E$ was part of the definition of $E$ being a separability idempotent \cite{VDsepid}.  Here, 
by beginning instead with the weights $\mu$ and $\nu$, we obtain the fullness result as a proposition.
\end{rem}

In the next proposition, we see that the analytic generators $\sigma^{\mu}_{-i}$ and $\sigma^{\nu}_{-i}$ 
can be characterized in terms of the maps $\gamma_B$ and $\gamma_C$.

\begin{prop}\label{modularauto}
\begin{enumerate}
  \item $\sigma^{\mu}_{-i}(c)=(\gamma_B\circ\gamma_C)(c)$, for $c\in{\mathcal D}(\gamma_B\circ\gamma_C)$.
  \item $\sigma^{\nu}_{-i}(b)=(\gamma_B^{-1}\circ\gamma_C^{-1})(b)$, for $b\in{\mathcal D}(\gamma_B^{-1}\circ\gamma_C^{-1})$.  
\end{enumerate}
\end{prop}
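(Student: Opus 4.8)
The plan is to reduce the statement to the composition identities already recorded in Proposition~\ref{gammagamma'formuals}, together with the standard composition law for analytic generators of a one-parameter automorphism group. Concretely, I would use Proposition~\ref{gammagamma'formuals}\,(1) in the forms $\gamma_B=\sigma^{\mu}_{-i/2}\circ R$, $\gamma_C=R^{-1}\circ\sigma^{\mu}_{-i/2}$ and $\gamma_B^{-1}=\sigma^{\nu}_{-i/2}\circ R^{-1}$, $\gamma_C^{-1}=R\circ\sigma^{\nu}_{-i/2}$. Since $R\colon B\to C$ is a ${}^*$-anti-isomorphism, both $R$ and $R^{-1}$ are everywhere-defined isometric bijections, so the middle factor $R\circ R^{-1}$ (resp. $R^{-1}\circ R$) cancels without altering any domain, giving
\begin{align}
\gamma_B\circ\gamma_C&=\sigma^{\mu}_{-i/2}\circ\sigma^{\mu}_{-i/2}, &
\gamma_B^{-1}\circ\gamma_C^{-1}&=\sigma^{\nu}_{-i/2}\circ\sigma^{\nu}_{-i/2},\notag
\end{align}
with $\mathcal{D}(\gamma_B\circ\gamma_C)=\bigl\{c\in\mathcal{D}(\sigma^{\mu}_{-i/2}):\sigma^{\mu}_{-i/2}(c)\in\mathcal{D}(\sigma^{\mu}_{-i/2})\bigr\}$ and, symmetrically, $\mathcal{D}(\gamma_B^{-1}\circ\gamma_C^{-1})=\bigl\{b\in\mathcal{D}(\sigma^{\nu}_{-i/2}):\sigma^{\nu}_{-i/2}(b)\in\mathcal{D}(\sigma^{\nu}_{-i/2})\bigr\}$.

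It then suffices to prove $\sigma^{\mu}_{-i/2}\circ\sigma^{\mu}_{-i/2}=\sigma^{\mu}_{-i}$ and $\sigma^{\nu}_{-i/2}\circ\sigma^{\nu}_{-i/2}=\sigma^{\nu}_{-i}$, domains included. This is a special case of the general fact that for a norm-continuous one-parameter automorphism group $(\sigma_t)$ of a $C^*$-algebra one has $\sigma_z\circ\sigma_w=\sigma_{z+w}$ whenever $\operatorname{Im}z$ and $\operatorname{Im}w$ have the same sign; the inclusion $\sigma_z\circ\sigma_w\subseteq\sigma_{z+w}$ holds for all $z,w$, and proper inclusion can occur only when the signs of the imaginary parts disagree. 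Here $z=w=-i/2$, both with imaginary part $-\tfrac12$, so the law applies to $(\sigma^{\mu}_t)$ and to $(\sigma^{\nu}_t)$; combined with the previous paragraph this yields (1) and (2). I would cite \cite{KuKMS} or \cite{Tk2} for this analytic-continuation lemma rather than reproving it.

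The one point that genuinely requires care — and the step I expect to be the main obstacle — is the equality, not merely inclusion, of domains in $\sigma^{\mu}_{-i/2}\circ\sigma^{\mu}_{-i/2}=\sigma^{\mu}_{-i}$: given $c\in\mathcal{D}(\sigma^{\mu}_{-i})$ one must verify $\sigma^{\mu}_{-i/2}(c)\in\mathcal{D}(\sigma^{\mu}_{-i/2})$. This is precisely where the ``same side of the real axis'' hypothesis enters. Letting $F$ be the analytic extension of $t\mapsto\sigma^{\mu}_t(c)$ to the strip $\{s:-1\le\operatorname{Im}s\le 0\}$ afforded by $c\in\mathcal{D}(\sigma^{\mu}_{-i})$, the shifted function $s\mapsto F\bigl(s-\tfrac{i}{2}\bigr)$ is analytic on $\{s:-\tfrac12\le\operatorname{Im}s\le 0\}$ and restricts on the real line to $t\mapsto\sigma^{\mu}_t\bigl(\sigma^{\mu}_{-i/2}(c)\bigr)$, which exhibits $\sigma^{\mu}_{-i/2}(c)\in\mathcal{D}(\sigma^{\mu}_{-i/2})$ with $\sigma^{\mu}_{-i/2}\bigl(\sigma^{\mu}_{-i/2}(c)\bigr)=F(-i)=\sigma^{\mu}_{-i}(c)$. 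The $\nu$-side is handled identically, which completes the argument.
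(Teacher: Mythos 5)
Your proposal is correct and follows essentially the same route as the paper: the paper's proof simply cites the identities $\gamma_B=\sigma^{\mu}_{-i/2}\circ R$, $\gamma_C=R^{-1}\circ\sigma^{\mu}_{-i/2}$ (and their inverses) from Proposition~\ref{gammagamma'formuals} and composes, leaving the analytic-generator law $\sigma_{-i/2}\circ\sigma_{-i/2}=\sigma_{-i}$ implicit. Your added verification of the domain equality is a correct filling-in of that implicit step (and is in fact what justifies the paper's later identification ${\mathcal D}(\gamma_B\circ\gamma_C)={\mathcal D}(\sigma^{\mu}_{-i})$).
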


\begin{proof}
For (1), use $\gamma_B=\sigma^{\mu}_{-i/2}\circ R$ and $\gamma_C=R^{-1}\circ\sigma^{\mu}_{-i/2}$, observed 
in Proposition~\ref{gammagamma'formuals}.  For (2), use $\gamma_B^{-1}=\sigma^{\nu}_{-i/2}
\circ R^{-1}$ and $\gamma_C^{-1}=R\circ\sigma^{\nu}_{-i/2}$. 
\end{proof}

The significance of Proposition~\ref{modularauto} is that the maps $\gamma_B\circ\gamma_C$ and $\gamma_B^{-1}
\circ\gamma_C^{-1}$ provide ``modular automorphisms'' for the weights $\mu$ and $\nu$, respectively. 
In particular, recall Lemma~\ref{lemKMS}.  According to Proposition~\ref{modularauto}, for 
$c\in{\mathcal D}(\gamma_B\circ\gamma_C)={\mathcal D}(\sigma^{\mu}_{-i})$ and for $x\in{\mathfrak M}_{\mu}$, 
we have: $cx,x\sigma^{\mu}_{-i}(c)\in{\mathfrak M}_{\mu}$, and $\mu(cx)=\mu\bigl(x\sigma^{\mu}_{-i}(c)\bigr)
=\mu\bigl(x(\gamma\circ\gamma')(c)\bigr)$.  Similar for the weight $\nu$.

We will conclude this section by showing a few different characterizations of the idempotent $E$.

\begin{prop}\label{sigmasigmaE}
For any $t\in\mathbb{R}$, we have:
$$
(\sigma^{\nu}_t\otimes\sigma^{\mu}_{-t})(E)=E.
$$
\end{prop}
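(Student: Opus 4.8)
The plan is to use the defining property of $E$ as a separability idempotent, namely that $E$ is characterized by the pair $(B,\nu)$ (Proposition~\ref{E_unique}), so that to prove $(\sigma^{\nu}_t\otimes\sigma^{\mu}_{-t})(E)=E$ it suffices to check that $(\sigma^{\nu}_t\otimes\sigma^{\mu}_{-t})(E)$ satisfies the two conditions of Definition~\ref{separabilitytriple}. Write $E_t:=(\sigma^{\nu}_t\otimes\sigma^{\mu}_{-t})(E)$. Since $\sigma^{\nu}_t$ is a $*$-automorphism of $B$ and $\sigma^{\mu}_{-t}$ is a $*$-automorphism of $C$, $E_t$ is again a self-adjoint idempotent in $M(B\otimes C)$, and the same $C^*$-algebra $C$ and anti-isomorphism $R$ are in play; so $E_t$ is a legitimate candidate and uniqueness will finish the job.

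First I would verify condition~(1), i.e. $(\nu\otimes\operatorname{id})(E_t)=1$. Expanding, $(\nu\otimes\operatorname{id})\bigl((\sigma^{\nu}_t\otimes\sigma^{\mu}_{-t})(E)\bigr)=\sigma^{\mu}_{-t}\bigl((\nu\circ\sigma^{\nu}_t\otimes\operatorname{id})(E)\bigr)$. Using the $\sigma^{\nu}$-invariance of $\nu$ ($\nu\circ\sigma^{\nu}_t=\nu$), this equals $\sigma^{\mu}_{-t}\bigl((\nu\otimes\operatorname{id})(E)\bigr)=\sigma^{\mu}_{-t}(1)=1$. Next, condition~(2): for $b\in{\mathcal D}(\sigma^{\nu}_{i/2})$ I must show $(\nu\otimes\operatorname{id})\bigl(E_t(b\otimes1)\bigr)=R\bigl(\sigma^{\nu}_{i/2}(b)\bigr)$. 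Writing $b=\sigma^{\nu}_{-t}(b')$ with $b'=\sigma^{\nu}_t(b)\in{\mathcal D}(\sigma^{\nu}_{i/2})$ (the domain is $\sigma^{\nu}_t$-invariant since the $\sigma$'s commute), one computes $(\nu\otimes\operatorname{id})\bigl(E_t(\sigma^{\nu}_{-t}(b')\otimes1)\bigr)=(\nu\otimes\operatorname{id})\bigl((\sigma^{\nu}_t\otimes\sigma^{\mu}_{-t})(E(b'\otimes1))\bigr)=\sigma^{\mu}_{-t}\bigl((\nu\otimes\operatorname{id})(E(b'\otimes1))\bigr)=\sigma^{\mu}_{-t}\bigl(R(\sigma^{\nu}_{i/2}(b'))\bigr)$ by Definition~\ref{separabilitytriple}\,(2). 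Now $\sigma^{\mu}_{-t}\circ R=R\circ\sigma^{\nu}_t$ (from $\sigma^{\mu}_t=R\circ\sigma^{\nu}_{-t}\circ R^{-1}$, equivalently the formula $\gamma_B=R\circ\sigma^{\nu}_{i/2}=\sigma^{\mu}_{-i/2}\circ R$ in Proposition~\ref{gammagamma'formuals} extended to all $t$), so this becomes $R\bigl(\sigma^{\nu}_t(\sigma^{\nu}_{i/2}(b'))\bigr)=R\bigl(\sigma^{\nu}_{i/2}(\sigma^{\nu}_t(b'))\bigr)=R\bigl(\sigma^{\nu}_{i/2}(b)\bigr)$, using that $\sigma^{\nu}_t$ and $\sigma^{\nu}_{i/2}$ commute. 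Hence $E_t$ satisfies both conditions, and Proposition~\ref{E_unique} gives $E_t=E$.

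The only genuine subtlety — the step I expect to need the most care — is the commutation $\sigma^{\mu}_{-t}\circ R=R\circ\sigma^{\nu}_t$ applied inside the analytic (complex-time) slot, i.e. that $\sigma^{\mu}_{-t}$ commutes with $R\circ\sigma^{\nu}_{i/2}$ on its natural domain. This is a standard fact about analytic continuation of one-parameter automorphism groups: if $(\alpha_z)$ and $(\beta_z)$ are the analytic extensions of commuting norm-continuous one-parameter groups and $\Phi$ intertwines $\alpha_t$ and $\beta_{-t}$, then $\Phi$ intertwines $\alpha_{i/2}$ and $\beta_{-i/2}$ on the appropriate domains; combined with the fact that $\sigma^{\nu}_t$ preserves ${\mathcal D}(\sigma^{\nu}_{i/2})$ and commutes with $\sigma^{\nu}_{i/2}$ there, the domain bookkeeping goes through cleanly. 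Everything else is a routine manipulation of slice maps and the invariance $\nu\circ\sigma^{\nu}_t=\nu$, $\mu\circ\sigma^{\mu}_t=\mu$.
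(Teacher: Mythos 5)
Your proposal is correct and follows essentially the same route as the paper: compute $(\nu\otimes\operatorname{id})\bigl((\sigma^{\nu}_t\otimes\sigma^{\mu}_{-t})(E)(b\otimes1)\bigr)=\sigma^{\mu}_{-t}\bigl(\gamma_B(\sigma^{\nu}_{-t}(b))\bigr)=(R\circ\sigma^{\nu}_{i/2})(b)$ using $\nu\circ\sigma^{\nu}_t=\nu$ and $\sigma^{\mu}_{-t}\circ R=R\circ\sigma^{\nu}_t$, then invoke the uniqueness of the separability idempotent (Proposition~\ref{E_unique}). Your version is slightly more explicit than the paper's in that you also verify condition~(1) of Definition~\ref{separabilitytriple} for $(\sigma^{\nu}_t\otimes\sigma^{\mu}_{-t})(E)$, which the paper leaves implicit; this is a welcome addition rather than a deviation.
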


\begin{proof}
This is Proposition~3.7 of \cite{BJKVD_SepId}.  By a straightforward calculation using the properties 
of $\nu$ and $\gamma_B$, we can show that for any $b\in{\mathcal D}(\sigma^{\nu}_{i/2})$, 
\begin{align}
(\nu\otimes\operatorname{id})\bigl((\sigma^{\nu}_t\otimes\sigma^{\mu}_{-t})(E)(b\otimes1)\bigr)
&=\sigma^{\mu}_{-t}\bigl(\gamma_B(\sigma^{\nu}_{-t}(b))\bigr)   \notag \\
&=(R\circ\sigma^{\nu}_{i/2})(b)=(\nu\otimes\operatorname{id})\bigl(E(b\otimes1)\bigr).
\notag
\end{align}
By the uniqueness of $E$ (Proposition~\ref{E_unique}), the result follows.
\end{proof}

Write $\sigma$ to denote the flip map on $M(B\otimes C)$.  Then $\sigma E\in M(C\otimes B)$. 
In the below, we wish to show that $(\gamma_C\otimes\gamma_B)(\sigma E)=E$.  However, as of now, 
we do not know if the expression $(\gamma_C\otimes\gamma_B)(\sigma E)$ even makes sense as 
a bounded element.  To make sense of this, and anticipating other future applications, we consider 
the following lemma:

\begin{lem}\label{thelemma}
Suppose $b\in{\mathcal T}_{\nu}$ and $c\in{\mathcal T}_{\mu}$.  We have:
\begin{align}
&(\gamma_C\otimes\gamma_B)\bigl((\gamma_C^{-1}(b)\otimes\gamma_B^{-1}(c))(\sigma E)\bigr)
=E(b\otimes c),  \notag \\
&(\gamma_C\otimes\gamma_B)\bigl((\sigma E)(\gamma_C^{-1}(b)\otimes\gamma_B^{-1}(c))\bigr)
=(b\otimes c)E,  \notag \\
&(\gamma_C\otimes\gamma_B)\bigl((1\otimes\gamma_B^{-1}(c))(\sigma E)
(\gamma_C^{-1}(b)\otimes1)\bigr)=(b\otimes1)E(1\otimes c),  \notag \\
&(\gamma_C\otimes\gamma_B)\bigl((\gamma_C^{-1}(b)\otimes1)(\sigma E)
(1\otimes\gamma_B^{-1}(c))\bigr)=(1\otimes c)E(b\otimes1).  \notag
\end{align}
\end{lem}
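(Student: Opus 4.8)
The plan is to prove the four identities essentially in parallel, reducing each to the previously-established transformation rules for $E$ under left/right multiplication by elements of $\mathcal D(\gamma_B)$, $\mathcal D(\gamma_C)$ and their inverses, together with the intertwining relations in Propositions~\ref{sepidgamma}, \ref{gamma'map}, and the $*$-relations in Proposition~\ref{gamma*}. The key technical observation I would isolate first is that for $b\in\mathcal T_\nu$ and $c\in\mathcal T_\mu$, the elements $\gamma_C^{-1}(b)\in C$ and $\gamma_B^{-1}(c)\in B$ lie in $\operatorname{Ran}(\gamma_C)$ and $\operatorname{Ran}(\gamma_B)$ respectively (indeed the Tomita algebras are invariant under all the $\sigma_z$, hence under the compositions defining $\gamma_B,\gamma_C$ and their inverses, so every term appearing is a genuine bounded element), and that $\sigma E$ inherits analogous one-sided multiplication rules from $E$ by applying the flip to Propositions~\ref{sepidgamma} and \ref{gamma'map}.

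Concretely, for the first identity I would start from the right-hand side $E(b\otimes c)$. Using Proposition~\ref{sepidgamma}(1) in the first leg and Proposition~\ref{gamma'map}(3) (or its inverse form) in the second leg, one rewrites $E(b\otimes c)$ as $E\bigl(1\otimes\gamma_B(b)\bigr)\bigl(\gamma_C(c)\otimes1\bigr)$-type expressions, peeling the $b$ and $c$ across to the other leg; the point of the hypothesis $b\in\mathcal T_\nu$, $c\in\mathcal T_\mu$ is exactly that these cross-leg transfers are legitimate and produce bounded elements. Applying the flip $\sigma$ and then the bijections $\gamma_C$ on the first leg and $\gamma_B$ on the second leg, and using that $\gamma_C\circ\gamma_C^{-1}=\operatorname{id}$ on the relevant domain (and likewise for $\gamma_B$), one arrives at the left-hand side $(\gamma_C\otimes\gamma_B)\bigl((\gamma_C^{-1}(b)\otimes\gamma_B^{-1}(c))(\sigma E)\bigr)$. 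The other three identities follow by the same bookkeeping: the second by doing the whole computation with the factor on the right of $E$ instead of the left (using the right-multiplication versions of the intertwining relations), and the third and fourth by combining a left factor in one leg with a right factor in the other.

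A cleaner alternative, which I would probably adopt to avoid chasing domains, is to verify each identity after slicing with a functional. Fix $\omega\in C^*$ and $\theta\in B^*$ and apply $\theta\otimes\omega$ (or pair against vectors $\Lambda_\nu(\,\cdot\,)\otimes\Lambda_\mu(\,\cdot\,)$). Then every occurrence of $(\gamma_C\otimes\gamma_B)$ turns, via Proposition~\ref{gammamap}(2) and Proposition~\ref{gamma'map}(4), into an integration against $\nu$ or $\mu$ of $E$ multiplied by suitable analytic elements, and the defining properties $(\nu\otimes\operatorname{id})(E(b\otimes1))=\gamma_B(b)$, $(\operatorname{id}\otimes\mu)((1\otimes c)E)=\gamma_C(c)$ together with the KMS identity (Lemma~\ref{lemKMS}) reduce both sides to the same scalar. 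Uniqueness of $E$ (Proposition~\ref{E_unique}), applied in the algebras $B$ and $C$ with the weights $\nu$, $\mu$, then upgrades the scalar equalities to the operator identities.

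The main obstacle is bookkeeping rather than depth: one must keep careful track of which leg each $\gamma$ acts on, which variable ($b$ or $c$) is being transferred, and — most importantly — verify that at every intermediate step the element one writes down actually lies in the domain of the next map to be applied. The hypothesis $b\in\mathcal T_\nu$, $c\in\mathcal T_\mu$ is strong enough to make all of this automatic (the Tomita algebras are cores simultaneously for every $\sigma_z$, hence for $\gamma_B^{\pm1}$ and $\gamma_C^{\pm1}$), so the genuine content is simply to record these domain facts once at the outset and then let the intertwining identities do the work; the four displayed equations are then formal consequences of Propositions~\ref{sepidgamma} and \ref{gamma'map} plus the flip.
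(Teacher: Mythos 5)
The paper does not prove this lemma in-text: it cites Lemma~3.8 of \cite{BJKVD_SepId} and records only the ingredients, namely the properties of $\gamma_B$ and $\gamma_C$, the identification of the analytic generators $\sigma^{\mu}_{-i}$, $\sigma^{\nu}_{-i}$ with $\gamma_B\circ\gamma_C$ and $\gamma_B^{-1}\circ\gamma_C^{-1}$ (Proposition~\ref{modularauto}), and the faithfulness of $\mu$ and $\nu$. Your second (``cleaner alternative'') route --- slice with functionals built from $\nu$ and $\mu$, reduce both sides to the same scalar using $(\nu\otimes\operatorname{id})\bigl(E(b\otimes1)\bigr)=\gamma_B(b)$, $(\operatorname{id}\otimes\mu)\bigl((1\otimes c)E\bigr)=\gamma_C(c)$ and the KMS identities of Lemma~\ref{lemKMS}, then separate points --- is essentially that list of ingredients, so in spirit you are on the paper's track. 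Two corrections: the step that upgrades the sliced equalities to operator equalities is the faithfulness of $\nu$ and $\mu$ (the functionals $\nu(\,\cdot\,b')\otimes\mu(\,\cdot\,c')$ with $b'$, $c'$ ranging over dense subsets separate points), not Proposition~\ref{E_unique}, which asserts uniqueness of a \emph{separability idempotent} for given $(B,\nu,R)$ and does not apply to the two sides here, neither of which is one. Also, your ``key observation'' should read $\gamma_C^{-1}(b)\in{\mathcal D}(\gamma_C)$ and $\gamma_B^{-1}(c)\in{\mathcal D}(\gamma_B)$; the ranges of $\gamma_C$ and $\gamma_B$ lie in $B$ and $C$ respectively, so the elements you name cannot lie in them.

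The genuine soft spot is precisely the issue the lemma exists to settle. As the paper notes immediately before the statement, it is not known a priori that $\gamma_C\otimes\gamma_B$ applied to anything involving $\sigma E$ makes sense: $\gamma_C\otimes\gamma_B$ is a tensor product of unbounded closed maps, and $(\gamma_C^{-1}(b)\otimes\gamma_B^{-1}(c))(\sigma E)$ is a product with a multiplier, not a finite sum of elementary tensors of analytic elements, so membership in the domain is not ``automatic bookkeeping'' even for $b\in{\mathcal T}_{\nu}$, $c\in{\mathcal T}_{\mu}$: the Tomita-algebra hypothesis controls $b$ and $c$ but says nothing by itself about the factor $\sigma E$. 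Your first route silently assumes this when you ``apply $\gamma_C$ on the first leg and $\gamma_B$ on the second leg'' to such a product, and your second route presupposes it as well, since one cannot slice an element that has not yet been shown to exist. What is needed is an argument exhibiting $(\gamma_C^{-1}(b)\otimes\gamma_B^{-1}(c))(\sigma E)$ concretely in the domain of the closed map $\gamma_C\otimes\gamma_B$ with the stated image --- for instance by writing $\gamma_C\otimes\gamma_B=(R^{-1}\otimes R)\circ(\sigma^{\mu}_{-i/2}\otimes\sigma^{\nu}_{i/2})$ via Proposition~\ref{gammagamma'formuals}, exploiting the invariance $(\sigma^{\nu}_t\otimes\sigma^{\mu}_{-t})(E)=E$ of Proposition~\ref{sigmasigmaE} together with a smearing or graph-norm approximation from ${\mathcal D}(\gamma_C)\odot{\mathcal D}(\gamma_B)$. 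Once that membership is secured, your computation of the value (and the anti-multiplicativity bookkeeping turning left factors into right factors) is correct and gives all four identities.
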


\begin{proof}
This is Lemma~3.8 of \cite{BJKVD_SepId}.  The proof uses the properties of $\gamma_B$ and 
$\gamma_C$, the characterization of the analytic generators  $\sigma^{\mu}_{-i}$ and $\sigma^{\nu}_{-i}$ 
(Proposition~\ref{modularauto}), and the faithfulness of the weights $\mu$ and $\nu$.
\end{proof}

We can now make sense of the expression $(\gamma_C\otimes\gamma_B)(\sigma E)$ 
as a left and right multiplier map, on a dense subset of $B\otimes C$.  By Lemma~\ref{thelemma}, 
we have, for $b\in{\mathcal T}_{\nu}$ (dense in $B$) and 
$c\in{\mathcal T}_{\mu}$ (dense in $C$),
\begin{align}
\bigl[(\gamma_C\otimes\gamma_B)(\sigma E)\bigr](b\otimes c)&:=(\gamma_C\otimes\gamma_B)
\bigl((\gamma_C^{-1}(b)\otimes\gamma_B^{-1}(c))(\sigma E)\bigr)=E(b\otimes c),  \notag \\
(b\otimes c)\bigl[(\gamma_C\otimes\gamma_B)(\sigma E)\bigr]&:=(\gamma_C\otimes\gamma_B)
\bigl((\sigma E)(\gamma_C^{-1}(b)\otimes\gamma_B^{-1}(c))\bigr)=(b\otimes c)E.
\notag
\end{align}
The following proposition is an immediate consequence:

\begin{prop}\label{sigmaE}
\begin{enumerate}
  \item $(\gamma_C\otimes\gamma_B)(\sigma E)=E$ and $(\gamma_B\otimes\gamma_C)(E)
=\sigma E$.
  \item $(R^{-1}\otimes R)(\sigma E)=E$ and $(R\otimes R^{-1})(E)=\sigma E$.
\end{enumerate}
\end{prop}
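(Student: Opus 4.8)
The plan is to prove Proposition~\ref{sigmaE} by leveraging the multiplier-map interpretation of $(\gamma_C\otimes\gamma_B)(\sigma E)$ that was just set up using Lemma~\ref{thelemma}. For part~(1), the first equation is essentially immediate: the defining identity
$$
\bigl[(\gamma_C\otimes\gamma_B)(\sigma E)\bigr](b\otimes c)=E(b\otimes c),\qquad
(b\otimes c)\bigl[(\gamma_C\otimes\gamma_B)(\sigma E)\bigr]=(b\otimes c)E,
$$
holds for all $b\in{\mathcal T}_{\nu}$ and $c\in{\mathcal T}_{\mu}$. Since ${\mathcal T}_{\nu}$ is dense in $B$, ${\mathcal T}_{\mu}$ is dense in $C$, and both $(\gamma_C\otimes\gamma_B)(\sigma E)$ and $E$ are bounded elements of $M(B\otimes C)$, the agreement of their left and right actions on the dense subspace $\operatorname{span}\{b\otimes c:b\in{\mathcal T}_{\nu},c\in{\mathcal T}_{\mu}\}$ forces $(\gamma_C\otimes\gamma_B)(\sigma E)=E$ as multipliers. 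The second equation of part~(1) is then obtained by applying $\gamma_B^{-1}\otimes\gamma_C^{-1}$ (extended appropriately as a multiplier map, exactly in the manner of Lemma~\ref{thelemma}) to both sides and using $\gamma_B^{-1}\circ\gamma_C=\operatorname{id}$-type cancellations, or equivalently by reading the defining formulas in the reverse direction; since $\sigma$ is an involution, applying $\sigma$ to $(\gamma_C\otimes\gamma_B)(\sigma E)=E$ and then inverting the $\gamma$'s gives $(\gamma_B\otimes\gamma_C)(E)=\sigma E$.

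For part~(2), the strategy is to reduce the $R$-statements to the $\gamma$-statements using the factorizations from Proposition~\ref{gammagamma'formuals}, namely $\gamma_B=R\circ\sigma^{\nu}_{i/2}$, $\gamma_C=R^{-1}\circ\sigma^{\mu}_{-i/2}$, together with Proposition~\ref{sigmasigmaE}, which states $(\sigma^{\nu}_t\otimes\sigma^{\mu}_{-t})(E)=E$. The idea is: from part~(1), $(\gamma_B\otimes\gamma_C)(E)=\sigma E$; substituting the factorizations gives $\bigl((R\circ\sigma^{\nu}_{i/2})\otimes(R^{-1}\circ\sigma^{\mu}_{-i/2})\bigr)(E)=\sigma E$, i.e. $(R\otimes R^{-1})\bigl((\sigma^{\nu}_{i/2}\otimes\sigma^{\mu}_{-i/2})(E)\bigr)=\sigma E$. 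By Proposition~\ref{sigmasigmaE} with $t=i/2$ (applied as an analytic continuation, which is legitimate on the relevant analytic elements and then extended by density/boundedness), $(\sigma^{\nu}_{i/2}\otimes\sigma^{\mu}_{-i/2})(E)=E$, so $(R\otimes R^{-1})(E)=\sigma E$. Applying $\sigma$ and then $R^{-1}\otimes R$ yields $(R^{-1}\otimes R)(\sigma E)=E$.

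The main obstacle is the legitimacy of manipulating the unbounded maps $\gamma_B$, $\gamma_C$, $\sigma^{\nu}_{i/2}$, $\sigma^{\mu}_{-i/2}$, $R$ as if they were honest everywhere-defined maps on $M(B\otimes C)$: none of $(\gamma_B\otimes\gamma_C)(E)$, $(R\otimes R^{-1})(E)$, or $(\sigma^{\nu}_{i/2}\otimes\sigma^{\mu}_{-i/2})(E)$ a priori makes sense as a bounded element, and the identities must be interpreted in the left/right multiplier sense established right before the proposition. I would therefore carry out all computations at the level of the defining formulas from Lemma~\ref{thelemma}, checking at each step that both legs land on dense subspaces (the Tomita algebras, or the domains/ranges of the relevant $\gamma$'s, all of which are dense by Proposition~\ref{gammamap} and Proposition~\ref{gamma'map}), and invoke the faithfulness of $\nu$ and $\mu$ together with the uniqueness of $E$ (Proposition~\ref{E_unique}) whenever I need to upgrade a dense-subspace identity to an equality of multipliers. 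Concretely, the cleanest route for part~(2) may actually be to bypass part~(1) and instead verify $(R\otimes R^{-1})(E)=\sigma E$ directly by testing against $(\nu\otimes\operatorname{id})(\,\cdot\,(b\otimes1))$ and using the characterization of $E$ via $\gamma_C$ in Proposition~\ref{gamma'map}, then reducing to the identity $\gamma_C = \sigma^{\nu}_{i/2}\circ R^{-1}$ and the $\sigma^{\nu}$-$\sigma^{\mu}$ invariance; this keeps everything anchored to the uniqueness argument that has already been used repeatedly above.
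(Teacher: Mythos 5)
Your proposal is correct and follows essentially the same route as the paper: part~(1) is obtained exactly as you describe, by identifying $(\gamma_C\otimes\gamma_B)(\sigma E)$ with $E$ as a left/right multiplier on the dense subspace coming from Lemma~\ref{thelemma} and extending by boundedness, and part~(2) is the paper's computation $(R^{-1}\otimes R)(\sigma E)=(\sigma^{\nu}_{i/2}\otimes\sigma^{\mu}_{-i/2})(E)=E$ via the factorizations of Proposition~\ref{gammagamma'formuals}. Your explicit flagging of the analytic continuation of Proposition~\ref{sigmasigmaE} to $t=i/2$ is a point the paper leaves implicit, but it is the same argument.
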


\begin{proof}
This is Proposition~3.9 of \cite{BJKVD_SepId}.  

(1). We observed above that $(\gamma_C\otimes\gamma_B)(\sigma E)$ coincides with $E$, 
as a left and right multiplier map on a dense subset of $B\otimes C$.  Since $E$ is bounded, 
this implies that $(\gamma_C\otimes\gamma_B)(\sigma E)$ can be canonically extended to 
a left and right multiplier map on all of $B\otimes C$, and that $(\gamma_C\otimes\gamma_B)
(\sigma E)=E\in M(B\otimes C)$. By taking the flip map, we also have: $(\gamma_B\otimes\gamma_C)(E)
=\sigma E$.

(2). Apply $R^{-1}\otimes R$ to both sides of $\sigma E=(\gamma_B\otimes\gamma_C)(E)$. 
Since we know $\gamma_B=R\circ\sigma^{\nu}_{i/2}$ and $\gamma_C=R^{-1}\circ\sigma^{\mu}_{-i/2}$ 
(Proposition~\ref{gammagamma'formuals}), we have:
$$
(R^{-1}\otimes R)(\sigma E)=(R^{-1}\otimes R)\bigl[(\gamma_B\otimes\gamma_C)(E)\bigr]
=(\sigma^{\nu}_{i/2}\otimes\sigma^{\mu}_{-i/2})(E)=E.
$$
By taking the flip map, we also have: $(R\otimes R^{-1})(E)=\sigma E$.
\end{proof}

\section{Comultiplication and the canonical idempotent $E$}

\subsection{Comultiplication}

Our goal is to define a suitable definition of a locally compact quantum groupoid, and of course, 
the first task is to clarify the notion of the comultiplication.  However, unlike in the quantum 
group theory, we cannot assume the non-degeneracy of the comultiplication map.  As observed in 
Introduction, this implies that the usual coassociativity condition of $(\Delta\otimes\operatorname{id})
\Delta=(\operatorname{id}\otimes\Delta)\Delta$ cannot be made sense (yet).  This explains why 
we are only requiring the weak coassociativity condition in the definition below.  The discussions 
in this section are same in spirit as in \cite{VDWangwha0} (in particular, section 1 and Appendix there), 
but adjusted accordingly to fit our $C^*$-algebraic setting.

\begin{defn}\label{comultiplication}
Let $A$ be a $C^*$-algebra.  By a {\em comultiplication\/} on $A$, we mean a ${}^*$-homomorphism 
$\Delta:A\to M(A\otimes A)$ such that
\begin{itemize}
  \item $(\Delta a)(1\otimes b)\in A\otimes A$ and $(a\otimes 1)(\Delta b)\in A\otimes A$, 
  for all $a,b\in A$;
  \item The following ``weak coassociativity condition'' holds:
$$
(a\otimes1\otimes1)\bigl((\Delta\otimes\operatorname{id})((\Delta b)(1\otimes c))\bigr)
=\bigl((\operatorname{id}\otimes\Delta)((a\otimes1)(\Delta b))\bigr)(1\otimes1\otimes c), 
$$
for all $a,b,c\in A$;
  \item The following subspaces are norm-dense in $A$:

$\operatorname{span}\bigl\{(\operatorname{id}\otimes\omega)((\Delta x)(1\otimes y)):\omega\in A^*,\ x,y\in A\bigr\}$,

$\operatorname{span}\bigl\{(\omega\otimes\operatorname{id})((x\otimes1)(\Delta y)):\omega\in A^*,\ x,y\in A\bigr\}$.
\end{itemize}
\end{defn}

\begin{rem}
The first condition originally appeared for multiplier Hopf algebras \cite{VD_multiplierHopf}, and 
is satisfied in the commutative case of $A=C_0(G)$.  It allows both the second and third conditions 
to make sense.  As noted above, we require only the weak coassociativity condition, because we do not 
have non-degeneracy assumption on $\Delta$ for it to be extended to $M(A)$.  The third condition is 
among the defining conditions for a locally compact quantum group \cite{KuVa}.  Often, motivated by 
the terminology in the purely algebraic setting, we refer to this condition as $\Delta$ being ``full''.  
\end{rem}

\begin{lem}\label{LemmaDeltafull}
The following results are also true:
\begin{itemize}
  \item $(1\otimes b)(\Delta a)\in A\otimes A$ and $(\Delta b)(a\otimes 1)\in A\otimes A$, 
  for all $a,b\in A$
  \item The following subspaces are norm-dense in $A$:

$\operatorname{span}\bigl\{(\operatorname{id}\otimes\omega)((1\otimes y)(\Delta x)):\omega\in A^*,\ x,y\in A\bigr\}$,

$\operatorname{span}\bigl\{(\omega\otimes\operatorname{id})((\Delta y)(x\otimes1)):\omega\in A^*,\ x,y\in A\bigr\}$.
\end{itemize}
\end{lem}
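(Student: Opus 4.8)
The plan is to deduce both assertions from the two conditions already assumed in Definition~\ref{comultiplication}, by applying the adjoint operation and using that $\Delta$ is a ${}^*$-homomorphism while $A\otimes A$ is a ${}^*$-algebra (closed under the adjoint and on which $c\mapsto c^*$ is an isometric conjugate-linear bijection).

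First I would handle the first bullet. Starting from $(\Delta a)(1\otimes b)\in A\otimes A$, valid for all $a,b\in A$, take adjoints: since $\Delta(a)^*=\Delta(a^*)$ and $A\otimes A$ is closed under the adjoint, $\bigl((\Delta a)(1\otimes b)\bigr)^*=(1\otimes b^*)(\Delta a^*)\in A\otimes A$. As $a^*,b^*$ range over all of $A$ when $a,b$ do, this gives $(1\otimes b)(\Delta a)\in A\otimes A$ for all $a,b$. Applying the same step to $(a\otimes1)(\Delta b)\in A\otimes A$ yields $(\Delta b)(a\otimes1)\in A\otimes A$.

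For the density statements, recall the standard slice-map identity: for $S\in A\otimes A$ and $\omega\in A^*$, writing $\bar\omega\in A^*$ for the functional $\bar\omega(c):=\overline{\omega(c^*)}$, one has $\bigl((\operatorname{id}\otimes\omega)(S)\bigr)^*=(\operatorname{id}\otimes\bar\omega)(S^*)$ (check on elementary tensors, extend by linearity and norm-continuity), and analogously for slicing on the left leg. Now apply the adjoint to a typical element $(\operatorname{id}\otimes\omega)\bigl((\Delta x)(1\otimes y)\bigr)$ of the first given dense span. Combining the identity above with $\bigl((\Delta x)(1\otimes y)\bigr)^*=(1\otimes y^*)(\Delta x^*)$, which lies in $A\otimes A$ by the first bullet, gives
$$
\bigl((\operatorname{id}\otimes\omega)((\Delta x)(1\otimes y))\bigr)^*
=(\operatorname{id}\otimes\bar\omega)\bigl((1\otimes y^*)(\Delta x^*)\bigr).
$$
Since $x\mapsto x^*$ and $y\mapsto y^*$ are bijections of $A$ and $\omega\mapsto\bar\omega$ is a (conjugate-linear) bijection of $A^*$, the set of all such adjoints equals exactly $\bigl\{(\operatorname{id}\otimes\omega)((1\otimes y)(\Delta x)):\omega\in A^*,\ x,y\in A\bigr\}$; as $c\mapsto c^*$ is an isometric conjugate-linear bijection of $A$ carrying dense subspaces to dense subspaces, the span of the latter set is dense in $A$. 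The second density claim follows identically, by taking adjoints of $(\omega\otimes\operatorname{id})\bigl((x\otimes1)(\Delta y)\bigr)$ and using the left-leg version of the slice-map identity.

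There is no real obstacle here; the only points needing care are the order of the two bullets — the first must be established before the second, so that the expressions $(1\otimes y)(\Delta x)$ appearing in the second lie in $A\otimes A$ and can be sliced — and the observation that $\omega\mapsto\bar\omega$ is a bijection of $A^*$, so that ranging over all functionals is preserved under passing to adjoints.
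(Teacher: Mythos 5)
Your argument is correct and is exactly the route the paper takes: its one-line proof (``just use the ${}^*$-operation on $A$, and note that $\Delta$ is a ${}^*$-homomorphism'') is precisely what you have spelled out, including the slice-map identity $\bigl((\operatorname{id}\otimes\omega)(S)\bigr)^*=(\operatorname{id}\otimes\bar\omega)(S^*)$ needed for the density claims. No gaps.
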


\begin{proof}
Just use the ${}^*$-operation on $A$, and note that $\Delta$ is a ${}^*$-homomorphism.
\end{proof}

In the quantum group case, as a consequence of the density condition as in Definition~\ref{comultiplication} 
($\Delta$ being ``full''), it follows that $\Delta(A)(A\otimes A)$ and $(A\otimes A)\Delta(A)$ are dense in 
$A\otimes A$. That is when $\Delta$ is non-generate, which is no longer true in our more general case. 
We will actually require that $\Delta(A)(A\otimes A)$ and $(A\otimes A)\Delta(A)$ are dense in $E(A\otimes A)$ 
and $(A\otimes A)E$, respectively, with a suitable projection $E\in M(A\otimes A)$, the ``canonical idempotent''. 

It will later turn out that this $E$ carries information about the ``source'' and ``target'' algebras, as well as the 
comultiplication map.  Before giving the precise definition of the canonical idempotent, let us gather some useful 
results.

\begin{prop}\label{Deltaproperty}
Let $(A,\Delta)$ be given as in Definition~\ref{comultiplication}.  Suppose there exists a self-adjoint idempotent element $E\in M(A\otimes A)$ 
such that 
$$\overline{\Delta(A)(A\otimes A)}^{\|\ \|}=E(A\otimes A),\quad {\text { and }}\quad 
\overline{(A\otimes A)\Delta(A)}^{\|\ \|}=(A\otimes A)E.
$$
Then the following results hold:
\begin{enumerate}
  \item For any $a\in A$, we have
$$
E(\Delta a)=\Delta a=(\Delta a)E.
$$
  \item Let $x,y\in M(A\otimes A)$ be such that $x(\Delta a)=y(\Delta a)$, for all $a\in A$. 
Then $xE=yE$.  Similarly, if $x,y\in M(A\otimes A)$ is such that $(\Delta a)x=(\Delta a)y$, 
$\forall a\in A$, then $Ex=Ey$.
  \item $E$ is uniquely determined.
  \item There exists a unique ${}^*$-homomorphism $\tilde{\Delta}:M(A)\to M(A\otimes A)$ such that 
  $\tilde{\Delta}|_A=\Delta$ and $\tilde{\Delta}(1_{M(A)})=E$.
\end{enumerate}
\end{prop}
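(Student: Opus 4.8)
The plan is to dispatch parts (1)--(3) quickly from the fact that $E$ is a projection and $\Delta$ is ``full'', and to reserve the real work for part (4).

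\emph{Parts (1)--(3).} Since $E\in M(A\otimes A)$ is a projection, $E(A\otimes A)$ is norm-closed and coincides with the set of elements of $A\otimes A$ fixed by left multiplication by $E$; similarly $(A\otimes A)E$ for right multiplication. By the first bullet of Definition~\ref{comultiplication}, $(\Delta a)(1\otimes b)\in\Delta(A)(A\otimes A)\subseteq E(A\otimes A)$, so $E(\Delta a)(1\otimes b)=(\Delta a)(1\otimes b)$; taking $b=e_\lambda$ an approximate identity of $A$ and passing to the strict limit (using $1\otimes e_\lambda\to1\otimes1$ strictly) gives $E(\Delta a)=\Delta a$, and symmetrically $(\Delta a)E=\Delta a$ from $(a\otimes1)(\Delta b)\in(A\otimes A)E$. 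This is (1). For (2), if $x(\Delta a)=y(\Delta a)$ for all $a$, then $x$ and $y$ agree on the span $\Delta(A)(A\otimes A)$, hence on its closure $E(A\otimes A)$; applying this to $Ew$ for $w\in A\otimes A$ yields $xE=yE$, and the one-sided variant uses the density of $(A\otimes A)\Delta(A)$ in $(A\otimes A)E$ in the same way. Part (3) is then formal: if $E'$ is another self-adjoint idempotent with the stated property, then (1) for $E$ and for $E'$ gives $E(\Delta a)=\Delta a=E'(\Delta a)$ and $(\Delta a)E=\Delta a=(\Delta a)E'$ for all $a$; feeding these into (2), applied to $E$ and to $E'$, yields $EE'=E'$ and $EE'=E$, so $E=E'$.

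\emph{Part (4).} Here the plan is to realize $\Delta$ as a non-degenerate ${}^*$-homomorphism into the adjointable operators on a Hilbert module and to invoke the standard extension to the multiplier algebra. Put $X:=E(A\otimes A)$, a right Hilbert $(A\otimes A)$-module with $\langle\xi,\eta\rangle:=\xi^*\eta$. By (1), left multiplication by $\Delta(a)$ carries $X$ into $X$ and is adjointable with adjoint left multiplication by $\Delta(a^*)$, so we obtain a ${}^*$-homomorphism $\Delta\colon A\to\mathcal L(X)$. It is non-degenerate: $\Delta(A)\Delta(A)(A\otimes A)=\Delta(\operatorname{span}\{ab\})(A\otimes A)$ has closure $\overline{\Delta(A)(A\otimes A)}=X$ (because $\operatorname{span}\{ab\}$ is dense in $A$ and $\Delta$ is contractive), hence $\overline{\Delta(A)X}=X$. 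A non-degenerate ${}^*$-homomorphism of $A$ into $\mathcal L(X)$ extends uniquely to a unital ${}^*$-homomorphism $\tilde\Delta\colon M(A)\to\mathcal L(X)$ restricting to $\Delta$ on $A$. Finally, since $E$ is a projection, $A\otimes A=X\oplus(1-E)(A\otimes A)$ orthogonally, so under the canonical identification $M(A\otimes A)=\mathcal L(A\otimes A)$ one has $\mathcal L(X)=E\,M(A\otimes A)\,E\subseteq M(A\otimes A)$, with $\operatorname{id}_X$ corresponding to $E$. Thus $\tilde\Delta\colon M(A)\to M(A\otimes A)$ is a ${}^*$-homomorphism with $\tilde\Delta|_A=\Delta$ and $\tilde\Delta(1_{M(A)})=E$. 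Uniqueness follows from (2): for another such $\tilde\Delta'$ and for $m\in M(A)$, $a\in A$, we get $\tilde\Delta'(m)\Delta(a)=\Delta(ma)=\tilde\Delta(m)\Delta(a)$, hence $\tilde\Delta'(m)E=\tilde\Delta(m)E$; since $\tilde\Delta(1_{M(A)})=\tilde\Delta'(1_{M(A)})=E$, both $\tilde\Delta(m)$ and $\tilde\Delta'(m)$ are unchanged by multiplication by $E$, so $\tilde\Delta'(m)=\tilde\Delta(m)$.

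The step I expect to need the most care is identifying the abstract $\tilde\Delta(m)\in\mathcal L(X)$ with an honest element of $M(A\otimes A)$, i.e.\ the identification $\mathcal L\bigl(E(A\otimes A)\bigr)=E\,M(A\otimes A)\,E$ and the verification that nothing is lost in passing between the two pictures. A self-contained alternative, closer in spirit to \cite{VDWangwha0}, is to define $\tilde\Delta(m)$ directly on the dense subspace $\Delta(A)(A\otimes A)$ of $X$ by $\tilde\Delta(m)\bigl(\Delta(a)z\bigr):=\Delta(ma)z$. Well-definedness: if $\sum_i\Delta(a_i)z_i=0$, then for every $a\in A$, $\Delta(a)\sum_i\Delta(ma_i)z_i=\Delta(am)\sum_i\Delta(a_i)z_i=0$, and an element of $E(A\otimes A)$ killed on the left by all $\Delta(a)$ is zero (take adjoints and use $\overline{\Delta(A)(A\otimes A)}=E(A\otimes A)$). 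Boundedness: the same computation gives $\bigl\|\sum_i\Delta(ma_i)z_i\bigr\|\le\|m\|\,\bigl\|\sum_i\Delta(a_i)z_i\bigr\|$ after applying $\Delta(e_\lambda)$ for an approximate identity $e_\lambda$ and letting $e_\lambda\to1$. The symmetric construction on $(A\otimes A)\Delta(A)$ supplies the right multiplier, and together these exhibit $\tilde\Delta(m)\in M(A\otimes A)$; routine checks on the dense subspace then show $\tilde\Delta$ is a ${}^*$-homomorphism extending $\Delta$ with $\tilde\Delta(1_{M(A)})=E$, and (2) gives uniqueness as above.
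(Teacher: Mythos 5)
Your parts (1)--(3) are correct and essentially identical to the paper's argument (the paper proves (3) directly from $E(A\otimes A)=E'(A\otimes A)$ rather than by feeding (1) into (2), but the computation is the same). For part (4) your primary route is genuinely different. The paper builds $\tilde{\Delta}(m)$ by hand as a double centralizer: it defines $L_{\tilde{\Delta}(m)}$ on $\Delta(A)(A\otimes A)$ by $\sum_i\Delta(a_i)z_i\mapsto\sum_i\Delta(ma_i)z_i$, checks well-definedness by multiplying on the left by $u\Delta(b)$ and using the density of $(A\otimes A)\Delta(A)$ in $(A\otimes A)E$, extends to all of $A\otimes A$ via $x\mapsto L_{\tilde{\Delta}(m)}(Ex)$, constructs $R_{\tilde{\Delta}(m)}$ symmetrically, and verifies the centralizer identity $yL(x)=R(y)x$. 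You instead treat $X=E(A\otimes A)$ as a complemented Hilbert $(A\otimes A)$-submodule, note that $a\mapsto\Delta(a)|_X$ is a non-degenerate ${}^*$-homomorphism into $\mathcal{L}(X)$, invoke the standard unital extension to $M(A)$, and identify $\mathcal{L}(X)$ with the corner $E\,M(A\otimes A)\,E$. This buys you multiplicativity, ${}^*$-preservation and $\tilde{\Delta}(1)=E$ for free from general Hilbert-module theory, at the cost of the (routine, and correctly flagged by you) verification that the corner identification returns $\Delta(a)|_X$ to the multiplier $\Delta(a)$. Your fallback construction in the last paragraph is essentially the paper's proof; there your well-definedness argument (take adjoints and use density of $\Delta(A)(A\otimes A)$ in $E(A\otimes A)$) is a legitimate one-sided variant of the paper's two-sided argument, and your boundedness estimate via $\Delta(e_\lambda)\to E$ strictly is actually more explicit than the paper's bare assertion that $L_{\tilde{\Delta}(m)}$ is ``clearly continuous.'' Both routes are sound.
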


\begin{proof}
(1). For $b,c\in A$, we will have $(\Delta a)(b\otimes c)\in\Delta(A)(A\otimes A)\subseteq E(A\otimes A)$, 
while $E^2=E$.  So we have: $E(\Delta a)(b\otimes c)=(\Delta a)(b\otimes c)$, true for arbitrary $b,c\in A$. 
This shows that $E(\Delta a)=\Delta a$, as elements in $M(A\otimes A)$.  Similarly, we can show that 
$(\Delta a)E=\Delta a$.

(2). Suppose $x(\Delta a)=y(\Delta a)$, $\forall a\in A$.  Since $E(\Delta a)=\Delta a$, we then have 
$xE(\Delta a)=yE(\Delta a)$.  Thus we have:
$$
xE(\Delta a)(a'\otimes a'')=yE(\Delta a)(a'\otimes a''),\quad\forall a,a',a''\in A.
$$
But we know that $\Delta(A)(A\otimes A)$ is dense in $E(A\otimes A)$, while $E^2=E$.  It is thus easy 
to see that $xE=yE$ in $M(A\otimes A)$.  The other statement is proved in a similar way, now using 
the density of $(A\otimes A)\Delta(A)$ in $(A\otimes A)E$.

(3). Suppose $E'$ is another such projection.  From
$$E(A\otimes A)=\overline{\Delta(A)(A\otimes A)}^{\|\ \|}=E'(A\otimes A),$$
since $E'E'=E'$, we will have: $(E'E-E)(A\otimes A)=0$, which means $E'E=E$.  Similarly, we can show $EE'=E$. 
This means that as idempotents, we have: $E\le E'$.  Reversing the roles, we will also have: $E'\le E$, 
so $E'=E$.

(4). Let $m\in M(A)$.  Note first that if such a map $\tilde{\Delta}$ were to exist, we will have, 
for any $x\in A\otimes A$,
$$
\tilde{\Delta}(m)x=\tilde{\Delta}(m\cdot1)x=\tilde{\Delta}(m)\tilde{\Delta}(1)x=\tilde{\Delta}(m)Ex.
$$
By assumption, we know that any $Ex$, $x\in A\otimes A$, can be approximated by elements of the 
form $\sum_{i=1}^N\Delta(a_i)z_i$, where $a_i\in A$, $z_i\in A\otimes A$.  For such elements, 
at least formally, it makes sense to consider:
\begin{equation}\label{(L_m)}
L_{\tilde{\Delta}(m)}:\sum_{i=1}^N\Delta(a_i)z_i\mapsto\tilde{\Delta}(m)
\sum_{i=1}^N\Delta(a_i)z_i=\sum_{i=1}^N\Delta(ma_i)z_i\,\in A\otimes A.
\end{equation}

Similarly, for any $y\in A\otimes A$, we will have $y\tilde{\Delta}(m)=yE\tilde{\Delta}(m)$, and any $yE$ 
is approximated by the elements of the form $\sum_{j=1}^{N'}w_j\Delta(b_j)$, where $b_j\in A$, 
$w_j\in A\otimes A$.  For these elements, we may consider:
\begin{equation}\label{(R_m)}
R_{\tilde{\Delta}(m)}:\sum_{j}w_j\Delta(b_j)\mapsto\sum_j w_j\Delta(b_j)\,\tilde{\Delta}(m)
=\sum_jw_j\Delta(b_jm)\,\in A\otimes A.
\end{equation}

We plan to show that $L_{\tilde{\Delta}(m)}$ and $R_{\tilde{\Delta}(m)}$ are valid linear maps that can 
be naturally extended to maps on $A\otimes A$, and that $(L_{\tilde{\Delta}(m)},R_{\tilde{\Delta}(m)})$ 
forms a double centralizer.  This would show that $\tilde{\Delta}(m)\in M(A\otimes A)$.  Moreover, by 
construction, the uniqueness of the extension map $\tilde{\Delta}:m\mapsto\tilde{\Delta}(m)$ would 
immediately follow.

To see if Equation~\eqref{(L_m)} indeed determines a well-defined linear map, suppose $\sum_{i=1}^N
\Delta(a_i)z_i=0$.  Then for any $u\in A\otimes A$ and $b\in A$, we have:
\begin{align}
u\Delta(b)\,L_{\tilde{\Delta}(m)}\left(\sum_{i=1}^N\Delta(a_i)z_i\right)
&=u\Delta(b)\sum_{i=1}^N\Delta(ma_i)z_i=u\sum_{i=1}^N\Delta(bma_i)z_i  \notag \\
&=u\Delta(bm)\sum_{i=1}^N\Delta(a_i)z_i=0.  \notag
\end{align}
Since $\overline{(A\otimes A)\Delta(A)}^{\|\ \|}=(A\otimes A)E$ and since $E\Delta(a)=\Delta(a)$ 
for any $a\in A$, this implies that $yL_{\tilde{\Delta}(m)}\left(\sum_{i=1}^N\Delta(a_i)z_i\right)
=yE\,L_{\tilde{\Delta}(m)}\left(\sum_{i=1}^N\Delta(a_i)z_i\right)=0$, for any $y\in A\otimes A$. 
So $L_{\tilde{\Delta}(m)}\left(\sum_{i=1}^N\Delta(a_i)z_i\right)=0$.  In this way, we can see that 
$L_{\tilde{\Delta}(m)}:\Delta(A)(A\otimes A)\to A\otimes A$ is a well-defined linear map. 

This map $L_{\tilde{\Delta}(m)}$ is clearly continuous, and extends to a bounded map 
$L_{\tilde{\Delta}(m)}:E(A\otimes A)\to A\otimes A$.  This in turn defines a map 
$L_{\tilde{\Delta}(m)}:A\otimes A\to A\otimes A$, by $L_{\tilde{\Delta}(m)}(x)
:=L_{\tilde{\Delta}(m)}(Ex)$.

In a similar way, Equation~\eqref{(R_m)} defines a bounded map $R_{\tilde{\Delta}(m)}:A\otimes A
\to A\otimes A$ such that $R_{\tilde{\Delta}(m)}(y):=R_{\tilde{\Delta}(m)}(yE)$.

Also from above, observe that: 
$\bigl[\sum_{j=1}^{N'}w_j\Delta(b_j)\bigr]L_{\tilde{\Delta}(m)}\left(\sum_{i=1}^N\Delta(a_i)z_i\right)
=\sum_{j=1}^{N'}\sum_{i=1}^Nw_j\Delta(b_jma_i)z_i
=R_{\tilde{\Delta}(m)}\left(\sum_{j=1}^{N'}w_j\Delta(b_j)\right)\bigl[\sum_{i=1}^N\Delta(a_i)z_i\bigr]$.
In this way, we see that for any $x,y\in A\otimes A$, we have:
\begin{equation}
y\tilde{L}_{\tilde{\Delta}(m)}(x)=yEL_{\tilde{\Delta}(m)}(Ex)=R_{\tilde{\Delta}(m)}(yE)Ex
=\tilde{R}_{\tilde{\Delta}(m)}(y)x.
\end{equation}
This means that the pair $(L_{\tilde{\Delta}(m)},R_{\tilde{\Delta}(m)})$ is a double centralizer. 
By the standard theory, we can canonically associate to it an element of $M(A\otimes A)$,  
which is none other than $\tilde{\Delta}(m)$.

By construction, we see easily that $\tilde{\Delta}(m)\tilde{\Delta}(m')=\tilde{\Delta}(mm')$, 
and that $\tilde{\Delta}(m^*)x=\bigl(x^*\tilde{\Delta}(m)\bigr)^*$ for all $x\in A\otimes A$. 
This verifies that $\tilde{\Delta}$ is a ${}^*$-homomorphism.  Finally, when $m=1$, 
note that $\tilde{\Delta}(1)x=\tilde{\Delta}(1)(Ex)=Ex$, for all $x\in A\otimes A$. 
It follows that $\tilde{\Delta}(1)=E$.
\end{proof}\

For convenience, we will just write $\Delta$ for the unique extended map $\tilde{\Delta}$ obtained above. 
Meanwhile, we can also extend the maps $\Delta\otimes\operatorname{id}$ and $\operatorname{id}
\otimes\Delta$, by using the same method (see below).

\begin{cor}
Let $E$ be as above.  Then the maps $\Delta\otimes\operatorname{id}$ and $\operatorname{id}\otimes\Delta$ 
have unique extensions to ${}^*$-homomorphisms from $M(A\otimes A)$ to $M(A\otimes A\otimes A)$, which 
will be still denoted by $\Delta\otimes\operatorname{id}$ and $\operatorname{id}\otimes\Delta$, such that 
$$(\Delta\otimes\operatorname{id})(1\otimes1)=E\otimes1,\quad {\text{ and }}\quad
(\operatorname{id}\otimes\Delta)(1\otimes1)=1\otimes E,$$
respectively.
\end{cor}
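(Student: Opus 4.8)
The plan is to repeat, essentially verbatim, the double‑centralizer construction in the proof of Proposition~\ref{Deltaproperty}\,(4), but now with the $*$‑homomorphism $\Delta\otimes\operatorname{id}$ (resp.\ $\operatorname{id}\otimes\Delta$) playing the role of $\Delta$, the $C^*$‑algebra $A\otimes A\otimes A$ playing the role of $A\otimes A$, and the self‑adjoint idempotent $E\otimes1$ (resp.\ $1\otimes E$) playing the role of $E$. Two preliminary facts make this substitution legitimate. First, $\Delta\otimes\operatorname{id}$ is genuinely a $*$‑homomorphism from $A\otimes A$ into $M(A\otimes A\otimes A)$: it is the composition of the $*$‑homomorphism $\Delta\otimes\operatorname{id}_A\colon A\otimes A\to M(A\otimes A)\otimes M(A)$ (the tensor product of the $*$‑homomorphisms $\Delta$ and $\operatorname{id}_A$, contractive for the minimal norm since $*$‑homomorphisms decrease every $C^*$‑norm) with the canonical $*$‑homomorphism $M(A\otimes A)\otimes M(A)\to M(A\otimes A\otimes A)$, and symmetrically for $\operatorname{id}\otimes\Delta$. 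Second, for $z\in A\otimes A$ and $w\in A\otimes A\otimes A$ one has $(\Delta\otimes\operatorname{id})(z)\,w\in A\otimes A\otimes A$ and $w\,(\Delta\otimes\operatorname{id})(z)\in A\otimes A\otimes A$; by bilinearity and density it suffices to check this on elementary tensors $z=x\otimes y$, $w=q\otimes a$, where $(\Delta\otimes\operatorname{id})(x\otimes y)(q\otimes a)=\bigl(\Delta(x)q\bigr)\otimes(ya)$, and $\Delta(x)q\in A\otimes A$ by the first bullet of Definition~\ref{comultiplication} together with Lemma~\ref{LemmaDeltafull} (similarly on the other side).

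The one computation special to the present situation is the pair of ``corner'' identities
$$\overline{(\Delta\otimes\operatorname{id})(A\otimes A)\,(A\otimes A\otimes A)}^{\,\|\ \|}=(E\otimes1)(A\otimes A\otimes A),\qquad \overline{(A\otimes A\otimes A)\,(\Delta\otimes\operatorname{id})(A\otimes A)}^{\,\|\ \|}=(A\otimes A\otimes A)(E\otimes1).$$
For the first, I would unwind both sides on elementary tensors: the left‑hand side is the closed span of the elements $\bigl(\Delta(x)q\bigr)\otimes(ya)$, which, using $\overline{A\cdot A}=A$ and the fact that the closed span of $p\otimes a$ depends only on the closure of the set of $p$'s, equals $\overline{\bigl(\overline{\Delta(A)(A\otimes A)}\bigr)\otimes A}$; by the hypothesis $\overline{\Delta(A)(A\otimes A)}^{\,\|\ \|}=E(A\otimes A)$ this is $\overline{\bigl(E(A\otimes A)\bigr)\otimes A}$, and the latter is exactly $(E\otimes1)(A\otimes A\otimes A)$ since $(E\otimes1)(q\otimes a)=(Eq)\otimes a$. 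The right‑handed identity follows the same way from $\overline{(A\otimes A)\Delta(A)}^{\,\|\ \|}=(A\otimes A)E$. From these identities and $(E\otimes1)^2=E\otimes1$ one then gets, exactly as in Proposition~\ref{Deltaproperty}\,(1), that $(E\otimes1)(\Delta\otimes\operatorname{id})(z)=(\Delta\otimes\operatorname{id})(z)=(\Delta\otimes\operatorname{id})(z)(E\otimes1)$ for every $z\in A\otimes A$.

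With these in hand, the argument of Proposition~\ref{Deltaproperty}\,(4) applies word for word: for $m\in M(A\otimes A)$ one defines a left‑multiplication map on the dense subspace $(\Delta\otimes\operatorname{id})(A\otimes A)(A\otimes A\otimes A)$ by $\sum_i(\Delta\otimes\operatorname{id})(z_i)w_i\mapsto\sum_i(\Delta\otimes\operatorname{id})(mz_i)w_i$ and a right‑multiplication map on $(A\otimes A\otimes A)(\Delta\otimes\operatorname{id})(A\otimes A)$ by $\sum_jw_j'(\Delta\otimes\operatorname{id})(z_j')\mapsto\sum_jw_j'(\Delta\otimes\operatorname{id})(z_j'm)$, checks these are well defined (using the right‑handed, resp.\ left‑handed, corner identity), extends them by continuity with the help of the idempotent $E\otimes1$, and verifies the pair is a double centralizer of $A\otimes A\otimes A$. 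The associated element of $M(A\otimes A\otimes A)$ is the value at $m$ of the extension $\widetilde{\Delta\otimes\operatorname{id}}$; by construction it is a $*$‑homomorphism restricting to $\Delta\otimes\operatorname{id}$ on $A\otimes A$, it is uniquely determined, and $\widetilde{\Delta\otimes\operatorname{id}}(1\otimes1)=E\otimes1$ because that element acts on $A\otimes A\otimes A$ as multiplication by $E\otimes1$. Running the identical argument with $1\otimes E$ in place of $E\otimes1$ yields the extension of $\operatorname{id}\otimes\Delta$. I expect the only delicate points — and hence the main obstacle — to be the bookkeeping around the minimal tensor product: confirming that $\Delta\otimes\operatorname{id}$ really lands in $M(A\otimes A\otimes A)$ and that norm‑closure interacts with the extra tensor leg $A$ as asserted in the corner identities. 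Once those are pinned down, the rest is precisely the machinery already developed in Proposition~\ref{Deltaproperty}.
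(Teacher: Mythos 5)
Your proposal is correct and follows exactly the paper's route: the paper's own proof simply says to rerun the double-centralizer argument of Proposition~\ref{Deltaproperty}\,(4) with $A\otimes A$ in place of $A$, $A\otimes A\otimes A$ in place of $A\otimes A$, and $E\otimes1$ (resp.\ $1\otimes E$) in place of $E$. The only difference is that you spell out the supporting details (that $\Delta\otimes\operatorname{id}$ lands in $M(A\otimes A\otimes A)$ and the ``corner'' density identities) which the paper leaves implicit; these are verified correctly.
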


\begin{proof}
Use the same method of proof given in (4) of Proposition~\ref{Deltaproperty} above: We just replace $A$ 
by $A\otimes A$ and $A\otimes A$ by $A\otimes A\otimes A$, with the projection $E\otimes1$ taking the 
role of $E$.  Namely,
$$
(E\otimes1)(\Delta\otimes\operatorname{id})(x)=(\Delta\otimes\operatorname{id})(x)=
(\Delta\otimes\operatorname{id})(x)(E\otimes1),\quad\forall x\in A\otimes A.
$$
In this way, we can (uniquely) define the extended ${}^*$-homomorphism $\tilde{\Delta}\otimes\operatorname{id}$, 
which we may just write as $\Delta\otimes\operatorname{id}$.

Similarly, with $1\otimes E$ taking the role of $E$, we can extend $\operatorname{id}\otimes\Delta$ to 
the level of $M(A\otimes A)$.
\end{proof}

Because of the extension, it is now possible to make sense of the maps $(\Delta\otimes\operatorname{id})
\Delta$ and $(\operatorname{id}\otimes\Delta)\Delta$.  In Theorem~\ref{coassociativity} below, we show 
that the weak coassociativity condition given earlier can now be replaced by the more familiar form of 
the coassociativity.  The method is essentially not much different from the one given in Appendix of 
\cite{VDWangwha0}.  First, we begin with a lemma:

\begin{lem}\label{lemcoassoc}
For clarity, write $\tilde{\Delta}\otimes\operatorname{id}$ and $\operatorname{id}\otimes\tilde{\Delta}$ 
for the extensions of $\Delta\otimes\operatorname{id}$ and $\operatorname{id}\otimes\Delta$ to $M(A\otimes A)$.
Then for any $a,b\in A$, we have:
\begin{align}
&\bigl((\tilde{\Delta}\otimes\operatorname{id})(\Delta a)\bigr)(1\otimes1\otimes b)
=(\Delta\otimes\operatorname{id})\bigl((\Delta a)(1\otimes b)\bigr),  \notag \\
&(a\otimes1\otimes1)\bigl((\operatorname{id}\otimes\tilde{\Delta})(\Delta b)\bigr)
=(\operatorname{id}\otimes\Delta)\bigl((a\otimes1)(\Delta b)\bigr).
\notag
\end{align}
\end{lem}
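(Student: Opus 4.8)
The plan is to prove both identities by testing them against arbitrary elements of $A\otimes A\otimes A$ and reducing everything back to the extension mechanism of Proposition~\ref{Deltaproperty}\,(4) and its corollary. Consider the first identity. Since $\widetilde{\Delta}\otimes\operatorname{id}$ is defined on $M(A\otimes A)$ by the rule $(\widetilde{\Delta}\otimes\operatorname{id})(m)z = (\Delta\otimes\operatorname{id})(Ez)$ acting via left/right multipliers, the key observation is that $\Delta a\in M(A\otimes A)$ satisfies $E(\Delta a) = \Delta a$, so the extension, when multiplied on the right by an element $z$ of $A\otimes A$, can be rewritten in terms of the honest (unextended) map $\Delta\otimes\operatorname{id}$ applied to a genuine element of $A\otimes A$. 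Concretely, I would first reduce to showing the identity after multiplying on the right by an arbitrary $1\otimes1\otimes b$ with $b\in A$; then use $(\Delta a)(1\otimes b)\in A\otimes A$ (first axiom of Definition~\ref{comultiplication}) so that $(\Delta\otimes\operatorname{id})\bigl((\Delta a)(1\otimes b)\bigr)$ makes sense without any extension at all.

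The core of the argument is then a bookkeeping lemma: for $w\in A\otimes A$ written (or approximated) as $w = \sum_i \Delta(a_i)z_i$ with $z_i\in A\otimes A$, one has $(\widetilde{\Delta}\otimes\operatorname{id})(\Delta a)\cdot w = \sum_i (\Delta\otimes\operatorname{id})(\Delta(a)\Delta(a_i))z_i = \sum_i (\Delta\otimes\operatorname{id})(\Delta(aa_i))z_i$, using that $\Delta$ is a homomorphism. I would compare this with $(\Delta\otimes\operatorname{id})\bigl((\Delta a)(1\otimes b)\bigr)$ by noting that $(\Delta a)(1\otimes b)$, being in $A\otimes A$, is itself approximated by sums $\sum_j\Delta(a_j)z_j$, and that the defining property of the extended map $\widetilde{\Delta}\otimes\operatorname{id}$ from the Corollary is precisely that it agrees with $\Delta\otimes\operatorname{id}$ on such sums while satisfying $(E\otimes1)(\Delta\otimes\operatorname{id})(x) = (\Delta\otimes\operatorname{id})(x)$ for $x\in A\otimes A$. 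The density of $\Delta(A)(A\otimes A)$ in $E(A\otimes A)$, together with continuity of all maps involved, then upgrades the equality on the dense subset to the full equality in $M(A\otimes A\otimes A)$. The second identity is entirely symmetric: one multiplies on the left by $a\otimes1\otimes1$, uses $(a\otimes1)(\Delta b)\in A\otimes A$, and runs the same argument with $\operatorname{id}\otimes\Delta$, the projection $1\otimes E$, and the density of $(A\otimes A)\Delta(A)$ in $(A\otimes A)E$.

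I expect the main obstacle to be purely notational rather than conceptual: one must be careful that the extended map $\widetilde{\Delta}\otimes\operatorname{id}$ is a priori only defined through its left- and right-multiplier actions, so every manipulation like ``$(\widetilde{\Delta}\otimes\operatorname{id})(\Delta a)\cdot(1\otimes1\otimes b) = (\Delta\otimes\operatorname{id})((\Delta a)(1\otimes b))$'' has to be justified by pairing with a further element of $A\otimes A\otimes A$ and invoking the density conditions, rather than by a direct substitution. The cleanest route is to prove the identity in the equivalent form: for all $a,b\in A$ and all $z\in A\otimes A\otimes A$,
$$
\bigl[(\widetilde{\Delta}\otimes\operatorname{id})(\Delta a)\bigr](1\otimes1\otimes b)z
= \bigl[(\Delta\otimes\operatorname{id})\bigl((\Delta a)(1\otimes b)\bigr)\bigr]z,
$$
and then conclude by the faithfulness of the $M(A\otimes A\otimes A)$-action on $A\otimes A\otimes A$. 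Establishing that both sides, when acted out, produce the same element of $A\otimes A\otimes A$ is then a routine computation using only that $\Delta$ is multiplicative and that $E$ acts as a left/right unit on $\Delta(A)$.
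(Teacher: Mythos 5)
Your proposal is essentially the paper's own proof: one tests the identity against elements of $A\otimes A\otimes A$, uses $(\Delta a)(1\otimes b)\in A\otimes A$ to strip off the extension $\tilde{\Delta}\otimes\operatorname{id}$, and concludes by multiplicativity of $\Delta$ together with the density condition and the fact that $E\otimes1$ acts as a unit. The only slip is in your ``bookkeeping lemma,'' where the relevant dense family of test elements lives one tensor level up --- sums $\sum_i u_i(\Delta\otimes\operatorname{id})(v_i)$ with $u_i\in A\otimes A\otimes A$, $v_i\in A\otimes A$, dense in $(A\otimes A\otimes A)(E\otimes1)$, rather than sums $\sum_i\Delta(a_i)z_i$ in $A\otimes A$ --- but your final paragraph already corrects for this implicitly.
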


\begin{proof}
Consider $\sum_i u_i(\Delta\otimes\operatorname{id})(v_i)$, where $u_i\in A\otimes A\otimes A$ and 
$v_i\in A\otimes A$.  Such elements form a dense subspace in $(A\otimes A\otimes A)(E\otimes1)$. 
Observe:
\begin{align}
&\sum_i u_i(\Delta\otimes\operatorname{id})(v_i)\bigl((\tilde{\Delta}\otimes\operatorname{id})(\Delta a)\bigr)
(1\otimes1\otimes b)  \notag \\
&=\sum_i u_i(\Delta\otimes\operatorname{id})\bigl(v_i(\Delta a)\bigr)(1\otimes1\otimes b)
=\sum_i u_i(\Delta\otimes\operatorname{id})\bigl(v_i(\Delta a)(1\otimes b)\bigr)  \notag \\
&=\sum_i u_i(\Delta\otimes\operatorname{id})(v_i)(\Delta\otimes\operatorname{id})
\bigl((\Delta a)(1\otimes b)\bigr).
\notag
\end{align}
Here we used the fact that $(\Delta a)(1\otimes b)\in A\otimes A$.
This means that for any $z\in A\otimes A\otimes A$, we have: 
\begin{align}
z\bigl((\tilde{\Delta}\otimes\operatorname{id})(\Delta a)\bigr)(1\otimes1\otimes b)
&=z(E\otimes1)\bigl((\tilde{\Delta}\otimes\operatorname{id})(\Delta a)\bigr)(1\otimes1\otimes b)
\notag \\
&=z(E\otimes1)(\Delta\otimes\operatorname{id})\bigl((\Delta a)(1\otimes b)\bigr)   \notag \\
&=z(\Delta\otimes\operatorname{id})\bigl((\Delta a)(1\otimes b)\bigr),
\notag
\end{align}
obtaining the first result. The proof for the second result is similar.
\end{proof}

We are now able to prove the coassociativity of $\Delta$:

\begin{theorem}\label{coassociativity}
Let $A$ be a $C^*$-algebra and let $\Delta$ be a comultiplication on it, satisfying the conditions of 
Definition~\ref{comultiplication}.  Suppose also the existence of an idempotent element $E\in M(A\otimes A)$, 
as in Proposition~\ref{Deltaproperty}.  Then the following ``coassociativity condition'' holds:
$$
(\Delta\otimes\operatorname{id})(\Delta a)=(\operatorname{id}\otimes\Delta)(\Delta a),\quad 
{\text { for any $a\in A$.}}
$$
\end{theorem}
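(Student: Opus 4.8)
The plan is to read off the coassociativity from the weak coassociativity of Definition~\ref{comultiplication} with the help of Lemma~\ref{lemcoassoc}. First note that, as explained in the paragraph following the Corollary, both sides of the asserted identity are to be understood via the extensions: the claim is the equality $(\tilde{\Delta}\otimes\operatorname{id})(\Delta a)=(\operatorname{id}\otimes\tilde{\Delta})(\Delta a)$ in $M(A\otimes A\otimes A)$, where $\Delta a\in M(A\otimes A)$. So set $X_a:=(\tilde{\Delta}\otimes\operatorname{id})(\Delta a)-(\operatorname{id}\otimes\tilde{\Delta})(\Delta a)\in M(A\otimes A\otimes A)$; the goal is to show $X_a=0$. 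The strategy is to produce the relation $(b\otimes1\otimes1)\,X_a\,(1\otimes1\otimes c)=0$ for all $b,c\in A$, and then to strip off the ``padding'' factors $b\otimes1\otimes1$ and $1\otimes1\otimes c$.

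For the first part, I would start from the weak coassociativity identity in the form
$$
(b\otimes1\otimes1)\bigl((\Delta\otimes\operatorname{id})((\Delta a)(1\otimes c))\bigr)
=\bigl((\operatorname{id}\otimes\Delta)((b\otimes1)(\Delta a))\bigr)(1\otimes1\otimes c),
$$
which is legitimate since $(\Delta a)(1\otimes c)\in A\otimes A$ and $(b\otimes1)(\Delta a)\in A\otimes A$ by Definition~\ref{comultiplication} and Lemma~\ref{LemmaDeltafull}. Applying the first identity of Lemma~\ref{lemcoassoc} rewrites the left side as $(b\otimes1\otimes1)\bigl((\tilde{\Delta}\otimes\operatorname{id})(\Delta a)\bigr)(1\otimes1\otimes c)$, and applying the second identity of Lemma~\ref{lemcoassoc} rewrites the right side as $(b\otimes1\otimes1)\bigl((\operatorname{id}\otimes\tilde{\Delta})(\Delta a)\bigr)(1\otimes1\otimes c)$. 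Subtracting yields $(b\otimes1\otimes1)\,X_a\,(1\otimes1\otimes c)=0$ for all $b,c\in A$. This step is just bookkeeping once the lemmas are in hand.

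The only point requiring (mild) care is the passage from $(b\otimes1\otimes1)\,X_a\,(1\otimes1\otimes c)=0$ to $X_a=0$, and I regard it as the main, albeit routine, obstacle. Choose an approximate identity $(e_\lambda)$ for $A$ with $\|e_\lambda\|\le 1$; then $e_\lambda\otimes1\otimes1\to 1$ and $1\otimes1\otimes e_\lambda\to 1$ strictly in $M(A\otimes A\otimes A)$, and multiplication by a fixed multiplier is strictly continuous along these bounded nets. Substituting $e_\lambda$ for $c$ and passing to the limit gives $(b\otimes1\otimes1)X_a=0$ for all $b\in A$; substituting $e_\lambda$ for $b$ and passing to the limit then gives $X_a=0$. (Equivalently, one may invoke non-degeneracy of $A$ to see that $\{(b\otimes1\otimes1)z:b\in A,\,z\in A\otimes A\otimes A\}$ and its right-sided analogue are norm-dense in $A\otimes A\otimes A$.) Since $\tilde{\Delta}|_A=\Delta$, the equality $X_a=0$ is exactly the claimed coassociativity, which completes the proof.
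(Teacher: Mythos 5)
Your proposal is correct and follows essentially the same route as the paper: both apply the two identities of Lemma~\ref{lemcoassoc} to rewrite the padded expressions, invoke the weak coassociativity of Definition~\ref{comultiplication} to equate the right-hand sides, and then strip off the factors $b\otimes1\otimes1$ and $1\otimes1\otimes c$ (the paper leaves this last step implicit, while you spell it out via an approximate identity, which is exactly the intended justification).
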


\begin{proof}
Let $a,a',a''\in A$.  By the result of Lemma~\ref{lemcoassoc}, we have:
$$
(a'\otimes1\otimes1)\bigl((\Delta\otimes\operatorname{id})(\Delta a)\bigr)(1\otimes1\otimes a'')
=(a'\otimes1\otimes1)(\Delta\otimes\operatorname{id})\bigl((\Delta a)(1\otimes a'')\bigr),
$$
and also
$$
(a'\otimes1\otimes1)\bigl((\operatorname{id}\otimes\Delta)(\Delta a)\bigr)(1\otimes1\otimes a'')
=(\operatorname{id}\otimes\Delta)\bigl((a'\otimes1)(\Delta a)\bigr)(1\otimes1\otimes a'').
$$
But the expressions in the right hand sides are equal, by the weak coassociativity 
(see Definition~\ref{comultiplication}).  Therefore, we have:
$$
(a'\otimes1\otimes1)\bigl((\Delta\otimes\operatorname{id})(\Delta a)\bigr)(1\otimes1\otimes a'')
=(a'\otimes1\otimes1)\bigl((\operatorname{id}\otimes\Delta)(\Delta a)\bigr)(1\otimes1\otimes a''), 
$$
true for arbitrary $a',a''\in A$.  This means that 
$$
(\Delta\otimes\operatorname{id})(\Delta a)=(\operatorname{id}\otimes\Delta)(\Delta a),\quad
\forall a\in A.
$$
\end{proof}

We expect the coassociativity of $\Delta$ to extend to the multiplier algebra level.  See results 
below:

\begin{prop}\label{coassociativityM(A)}
Let $(A,\Delta)$ and $E$ be as above.  Then we have:
\begin{enumerate}
  \item $(\Delta\otimes\operatorname{id})(E)=(\operatorname{id}\otimes\Delta)(E)$;
  \item $(\Delta\otimes\operatorname{id})(\Delta m)=(\operatorname{id}\otimes\Delta)(\Delta m)$, 
for any $m\in M(A)$.
\end{enumerate}
\end{prop}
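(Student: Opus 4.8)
The plan is to bootstrap from the coassociativity on $A$ (Theorem~\ref{coassociativity}) to the multiplier level, using the extended $*$-homomorphisms $\Delta\otimes\operatorname{id}$ and $\operatorname{id}\otimes\Delta$ on $M(A\otimes A)$ constructed in the Corollary, together with the identity $\tilde\Delta(1_{M(A)})=E$ from Proposition~\ref{Deltaproperty}(4). First I would establish (1), since (2) follows from it: for $m\in M(A)$ we have $\Delta m=\tilde\Delta(m\cdot 1)=(\Delta m)\,E$, hence $(\Delta\otimes\operatorname{id})(\Delta m)=\bigl((\Delta\otimes\operatorname{id})(\Delta m)\bigr)(\Delta\otimes\operatorname{id})(E)$ and likewise for $\operatorname{id}\otimes\Delta$; applying both extended maps to $\Delta m = E(\Delta m)E$ and using (1) on the outer $E$'s, combined with multiplicativity, reduces the two sides to the same expression — modulo the interior factor, which is handled by approximating $m$ strictly by elements of $A$ where Theorem~\ref{coassociativity} applies directly, and noting that $\Delta\otimes\operatorname{id}$, $\operatorname{id}\otimes\Delta$ are strictly continuous on bounded sets (being nondegenerate $*$-homomorphisms into a multiplier algebra). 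So the real content is (1).

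For (1), the key is to test the equality $(\Delta\otimes\operatorname{id})(E)=(\operatorname{id}\otimes\Delta)(E)$ against elements of $A\otimes A\otimes A$ on both sides, using the density properties built into the definition of $E$. Recall $(\Delta\otimes\operatorname{id})(E)=(\Delta\otimes\operatorname{id})(1\otimes 1)\cdot\bigl(\text{nothing}\bigr)$ is not quite right — rather, I would use that $E=\tilde\Delta(1)$ and that for any $a\in A$, $(\Delta a)E=\Delta a=E(\Delta a)$; applying the extended $\Delta\otimes\operatorname{id}$ gives $(\Delta\otimes\operatorname{id})(\Delta a)\,(\Delta\otimes\operatorname{id})(E)=(\Delta\otimes\operatorname{id})(\Delta a)$, and by Theorem~\ref{coassociativity} this equals $(\operatorname{id}\otimes\Delta)(\Delta a)$. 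Symmetrically, applying $\operatorname{id}\otimes\Delta$ to $(\Delta a)E=\Delta a$ yields $(\operatorname{id}\otimes\Delta)(\Delta a)\,(\operatorname{id}\otimes\Delta)(E)=(\operatorname{id}\otimes\Delta)(\Delta a)=(\Delta\otimes\operatorname{id})(\Delta a)$. Thus both $(\Delta\otimes\operatorname{id})(E)$ and $(\operatorname{id}\otimes\Delta)(E)$ act as a right identity on the set $\{(\Delta\otimes\operatorname{id})(\Delta a)z:a\in A,\ z\in A^{\otimes 3}\}$, and also (using the mirror identities from Lemma~\ref{lemcoassoc} and the left/right symmetry) as a left identity on $\{z(\operatorname{id}\otimes\Delta)(\Delta a)\}$. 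Since by Theorem~\ref{coassociativity} these two common expressions $(\Delta\otimes\operatorname{id})(\Delta a)=(\operatorname{id}\otimes\Delta)(\Delta a)$ coincide and their span (after multiplying by $A^{\otimes 3}$ on both sides) is dense in the appropriate corner, we conclude that both projections $(\Delta\otimes\operatorname{id})(E)$ and $(\operatorname{id}\otimes\Delta)(E)$ dominate the support of that corner; running the argument from both sides and invoking the uniqueness-type reasoning of Proposition~\ref{Deltaproperty}(2)--(3) forces them to be equal.

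The main obstacle I anticipate is making the density step fully rigorous: I need to know that $\operatorname{span}\{(\Delta\otimes\operatorname{id})(\Delta a)(x\otimes y\otimes z)\}$ is norm-dense in $\bigl((\Delta\otimes\operatorname{id})(\Delta A)\bigr)(A^{\otimes 3})$ and that this in turn "sees" all of $(\Delta\otimes\operatorname{id})(E)\cdot(A^{\otimes 3})$ — i.e. that the range projection of $(\Delta\otimes\operatorname{id})(\Delta A)(A^{\otimes 3})$ is exactly $(\Delta\otimes\operatorname{id})(E)$, which should come from applying the density hypothesis $\overline{\Delta(A)(A\otimes A)}=E(A\otimes A)$ to the $*$-homomorphism $\Delta\otimes\operatorname{id}$ and using that it is nondegenerate, hence maps approximate units to approximate units and preserves the relevant closures. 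Once that bookkeeping is in place, (1) follows, and (2) is then a short strict-continuity argument as sketched above. An alternative, possibly cleaner route to (1): apply $\operatorname{id}\otimes\Delta$ to the already-established identity $(\Delta\otimes\operatorname{id})(\Delta a)=(\operatorname{id}\otimes\Delta)(\Delta a)$ — but this just shifts the problem to a four-fold leg, so I expect the direct approach above to be the most economical.
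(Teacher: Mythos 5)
Your overall architecture for part (1) is the same as the paper's: show that the two self-adjoint idempotents $(\Delta\otimes\operatorname{id})(E)$ and $(\operatorname{id}\otimes\Delta)(E)$ have the same left (and right) action on $A\otimes A\otimes A$, with common range the closure of $\bigl((\Delta\otimes\operatorname{id})\Delta(A)\bigr)(A\otimes A\otimes A)=\bigl((\operatorname{id}\otimes\Delta)\Delta(A)\bigr)(A\otimes A\otimes A)$ (equal by Theorem~\ref{coassociativity}), and conclude that the projections coincide. You have also correctly isolated the one step that carries all the weight: identifying $[(\Delta\otimes\operatorname{id})(E)](A\otimes A\otimes A)$ with $\overline{(\Delta\otimes\operatorname{id})(\Delta(A))\,(A\otimes A\otimes A)}$. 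But your proposed justification of that step would fail: $\Delta$, and hence $\Delta\otimes\operatorname{id}$ and $\operatorname{id}\otimes\Delta$, are \emph{not} nondegenerate ${}^*$-homomorphisms --- their extensions send the unit to $E$, $E\otimes1$ and $1\otimes E$ respectively, not to the identity, and this degeneracy is the entire reason the canonical idempotent is needed --- so you cannot argue via approximate units being mapped to approximate units. The correct mechanism is the defining property of the extension from the Corollary, $[(\Delta\otimes\operatorname{id})(E)](E\otimes1)=(\Delta\otimes\operatorname{id})(E)$, combined with $(E\otimes1)(A^{\otimes3})=\overline{(\Delta(A)\otimes A)(A^{\otimes3})}$ and multiplicativity of the extended map; this yields
\begin{align*}
[(\Delta\otimes\operatorname{id})(E)](A^{\otimes3})
&=[(\Delta\otimes\operatorname{id})(E)]\bigl(\Delta(A)\otimes A\bigr)(A^{\otimes3})\\
&=(\Delta\otimes\operatorname{id})\bigl(E(A\otimes A)\bigr)(A^{\otimes3})
=\bigl((\Delta\otimes\operatorname{id})\Delta(A)\bigr)(A^{\otimes3}),
\end{align*}
all spaces being closed because a projection times $A^{\otimes3}$ is closed.

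The same objection applies to your argument for part (2): the strict continuity of $\tilde{\Delta}$, $\Delta\otimes\operatorname{id}$, $\operatorname{id}\otimes\Delta$ on bounded sets is in fact true, but it must be derived from the double-centralizer construction over the dense subspaces $\Delta(A)(A\otimes A)$, $(A\otimes A)\Delta(A)$ (an $\varepsilon/3$ argument using $\tilde{\Delta}(m)x=\tilde{\Delta}(m)Ex$), not from nondegeneracy. Once repaired, your approximation argument works and does not even require part (1). The paper instead sidesteps this by rerunning the uniqueness argument of Proposition~\ref{Deltaproperty}\,(4) with the projection $G=(\Delta\otimes\operatorname{id})(E)=(\operatorname{id}\otimes\Delta)(E)$ in the role of $E$: both $m\mapsto(\Delta\otimes\operatorname{id})(\Delta m)$ and $m\mapsto(\operatorname{id}\otimes\Delta)(\Delta m)$ are extensions, relative to the same $G$, of maps that agree on $A$ by Theorem~\ref{coassociativity}, hence coincide on $M(A)$. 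Either route is acceptable, but as written your justifications at the two key points rest on a false premise and need to be replaced as indicated.
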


\begin{proof}
(1). By Proposition~\ref{Deltaproperty}, we have: 
\begin{align}
[(\Delta\otimes\operatorname{id})(E)](A\otimes A\otimes A)
&=[(\Delta\otimes\operatorname{id})(E)](E\otimes1)(A\otimes A\otimes A)  \notag \\
&=[(\Delta\otimes\operatorname{id})(E)]\bigl(\Delta(A)\otimes A\bigr)(A\otimes A\otimes A) \notag \\
&=(\Delta\otimes\operatorname{id})\bigl(E(A\otimes A)\bigr)(A\otimes A\otimes A) \notag \\
&=\bigl((\Delta\otimes\operatorname{id})\Delta(A)\bigr)(A\otimes A\otimes A).
\notag
\end{align}
Since $E$ and $(\Delta\otimes\operatorname{id})(E)$ are projections, all the spaces above 
are norm-closed.  In a similar way, we can also show that
$$
[(\operatorname{id}\otimes\Delta)(E)](A\otimes A\otimes A)=\cdots
=\bigl((\operatorname{id}\otimes\Delta)\Delta(A)\bigr)(A\otimes A\otimes A).
$$
By Theorem~\ref{coassociativity}, the right sides are same.  So 
$[(\Delta\otimes\operatorname{id})(E)](A\otimes A\otimes A)
=[(\operatorname{id}\otimes\Delta)(E)](A\otimes A\otimes A)$. 
Similarly, we have: $(A\otimes A\otimes A)[(\Delta\otimes\operatorname{id})(E)]
=(A\otimes A\otimes A)[(\operatorname{id}\otimes\Delta)(E)]$.  Therefore, 
$(\Delta\otimes\operatorname{id})(E)=(\operatorname{id}\otimes\Delta)(E)$.

(2). Write $G=(\Delta\otimes\operatorname{id})(E)=(\operatorname{id}\otimes\Delta)(E)$, which 
is a projection.  By modifying the proof of Proposition~\ref{Deltaproperty}\,(4) a little, 
with $G$ taking the role of $E$, we can regard the map $M(A)\ni m\mapsto(\Delta\otimes
\operatorname{id})(\Delta m)$ as a (unique) extension of the map $A\ni a\mapsto(\Delta\otimes
\operatorname{id})(\Delta a)$.  Similarly, we can regard the map $M(A)\ni m\mapsto(\operatorname{id}
\otimes\Delta)(\Delta m)$ as a (unique) extension of the map $A\ni a\mapsto(\operatorname{id}
\otimes\Delta)(\Delta a)$, using the same projection $G$.  By Theorem~\ref{coassociativity}, 
the maps at the level of $A$ coincide.  Therefore, the uniqueness means that we have
$(\Delta\otimes\operatorname{id})(\Delta m)=(\operatorname{id}\otimes\Delta)(\Delta m)$, 
$\forall m\in M(A)$.
\end{proof}

\subsection{The canonical idempotent}

In Definition~\ref{canonicalidempotent} below, we formally give the definition of the canonical 
idempotent $E$.  Condition~(1) says that $E$ satisfies the density conditions given in 
Proposition~\ref{Deltaproperty}; Condition~(2) means $E$ is a ``separability idempotent'', 
as discussed in Section~2; Condition~(3) is motivated by the theory of weak (multiplier) 
Hopf algebras \cite{BNSwha1}, \cite{NVfqg}, \cite{VDWangwha0}, \cite{VDWangwha1}. 
See remark following the definition.

\begin{defn}\label{canonicalidempotent}
Let $A$ be a $C^*$-algebra and let $\Delta$ be a comultiplication on it, as in 
Definition~\ref{comultiplication}.  By a {\em canonical idempotent\/} for $(A,\Delta)$, 
we mean a self-adjoint idempotent element $E\in M(A\otimes A)$ such that 
\begin{enumerate}
  \item $\Delta(A)(A\otimes A)$ is norm-dense in $E(A\otimes A)$ and $(A\otimes A)\Delta(A)$ 
is norm-dense in $(A\otimes A)E$;
  \item There exist two non-degenerate $C^*$-subalgebras $B$, $C$ of $M(A)$, a ${}^*$-anti-isomorphism 
  $R=R_{BC}:B\to C$, a KMS weight $\nu$ on $B$, such that $E\in M(B\otimes C)$ and that $(E,B,\nu)$ 
  forms a separability triple, in the sense of Definition~\ref{separabilitytriple};
  \item $E\otimes1$ and $1\otimes E$ commute, and we also have:
$$
(\operatorname{id}\otimes\Delta)(E)=(E\otimes1)(1\otimes E)=(1\otimes E)(E\otimes1)
=(\Delta\otimes\operatorname{id})(E).
$$
\end{enumerate}
\end{defn}

\begin{rem}
We already discussed about the density condition (1), the separability idempotent condition 
(2), as well as their consequences.  See Sections \S3.2 and \S2, respectively.  By (1) and (2), 
if such $E$ exists, it is unique.  Since $B$, $C$ are non-degenerate $C^*$-subalgebras of $M(A)$, 
it follows that $M(B)$, $M(A)$ are also subalgebras of $M(A)$, and $M(B\otimes C)$ is 
a subalgebra of $M(A\otimes A)$.  So $E\in M(B\otimes C)\subseteq M(A\otimes A)$. 
In Proposition~\ref{Efull}, we noted that the source algebra $B$ is a left leg of $E$ and 
the target algebra $C$ is a right leg of $E$.  

Since condition (1) enables us to extend $\Delta$ to the multiplier algebra level and the 
coassociativity holds, we can make sense of the expressions in condition (3).  Condition (3) 
is referred to as the ``weak comultiplicativity of the unit'' in \cite{BNSwha1}.  It is 
always true in the groupoid case, and there are no known cases where this condition is 
violated.  The condition seems natural in that sense, and parts of it can be proved (like 
in Proposition~\ref{coassociativityM(A)}), but apparently, does not seem to follow from 
other axioms.
\end{rem}

Here are some more consequences of the existence of the canonical idempotent $E$. 

\begin{prop}
The $C^*$-algebras $B$ and $C$ commute.
\end{prop}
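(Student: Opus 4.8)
The goal is to show that $B$ and $C$, viewed as subalgebras of $M(A)$, commute. The plan is to exploit condition~(3) of Definition~\ref{canonicalidempotent}, which asserts that $E\otimes1$ and $1\otimes E$ commute, together with the fact from Proposition~\ref{Efull} that $E$ has $B$ as its left leg and $C$ as its right leg. The key idea is that $E\otimes 1$ lives (in the relevant legs) inside $M(B\otimes C\otimes A)$, while $1\otimes E$ lives inside $M(A\otimes B\otimes C)$, so their commuting forces a relation in the overlapping middle leg, which is where both $C$ (from the first $E$) and $B$ (from the second $E$) appear.

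First I would slice the identity $(E\otimes1)(1\otimes E)=(1\otimes E)(E\otimes1)$ in the outer legs. Concretely, for $\theta\in B^*$ applied to the first leg and $\omega\in C^*$ applied to the third leg, one gets
$$
(\theta\otimes\operatorname{id}\otimes\omega)\bigl((E\otimes1)(1\otimes E)\bigr)
=(\theta\otimes\operatorname{id}\otimes\omega)\bigl((1\otimes E)(E\otimes1)\bigr).
$$
The left-hand side equals $\bigl[(\theta\otimes\operatorname{id})(E)\bigr]\bigl[(\operatorname{id}\otimes\omega)(E)\bigr]$, which by Proposition~\ref{Efull}\,(2) is a product of an element of $C$ with an element of $B$; the right-hand side is the same product in the opposite order, an element of $B$ times an element of $C$. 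By the fullness statements in Proposition~\ref{Efull}\,(3), the elements $(\theta\otimes\operatorname{id})(E)$ range over a dense subset of $C$ and the elements $(\operatorname{id}\otimes\omega)(E)$ range over a dense subset of $B$ as $\theta$, $\omega$ vary (possibly after also inserting multipliers $b\otimes1$ or $1\otimes c$, which one can carry along since $E(b\otimes1),(1\otimes c)E\in B\otimes C$ by Proposition~\ref{Efull}\,(1)). Passing to norm limits then yields $cb=bc$ for all $b\in B$, $c\in C$.

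The main obstacle I anticipate is bookkeeping about where the slice maps are legitimate and what algebra the outputs land in: one must be careful that $\theta$, $\omega$ are applied to the correct legs, that $(E\otimes 1)(1\otimes E)$ is genuinely an element of $M(A\otimes A\otimes A)$ on which such slices make sense, and that the density/fullness of Proposition~\ref{Efull}\,(3) really does sweep out all of $B$ and all of $C$ (not just a dense subalgebra that might a priori fail to generate). To handle the last point cleanly, one uses that $B$ and $C$ are $C^*$-algebras and that the commutation relation $cb=bc$ is a closed condition, so density suffices. An alternative, perhaps slicker route that avoids some of this: observe that by Proposition~\ref{Efull}\,(1) and Proposition~\ref{sepidgamma}, for $b\in\mathcal D(\gamma_B)$ one has $E(b\otimes1)=E(1\otimes\gamma_B(b))$, so $b\otimes1$ and $1\otimes\gamma_B(b)$ act identically on the left on the dense subspace $E(B\otimes C)$ of $M(B\otimes C)$; combined with condition~(3) one can then read off commutation directly. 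Either way, the heart of the argument is simply that the two factorizations of the slice of $(E\otimes1)(1\otimes E)$ coincide, and fullness upgrades this to a statement about all of $B$ and $C$.
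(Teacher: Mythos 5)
Your proposal is correct and follows essentially the same route as the paper: both arguments slice the identity $(E\otimes1)(1\otimes E)=(1\otimes E)(E\otimes1)$ against functionals in the outer legs (after cutting down by elements $b\otimes1$ and $1\otimes c$ so the slices are legitimate) and then invoke the fullness of $E$ from Proposition~\ref{Efull} to upgrade the resulting relation to all of $B$ and $C$ by density. The only cosmetic difference is that the paper slices one outer leg at a time, first extracting the intermediate identity $E(1\otimes b)=(1\otimes b)E$ for all $b\in B$ (which is reused later), whereas you slice both outer legs simultaneously.
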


\begin{proof}
From (3) of Definition~\ref{canonicalidempotent}, we know $(E\otimes1)(1\otimes E)
=(1\otimes E)(E\otimes1)$.  Let $c\in C$ and $\omega\in C^*$.  Then
$$
(\operatorname{id}\otimes\operatorname{id}\otimes\omega)\bigl((E\otimes1)(1\otimes E)
(1\otimes 1\otimes c)\bigr)=(\operatorname{id}\otimes\operatorname{id}\otimes\omega)
\bigl((1\otimes E)(E\otimes1)(1\otimes 1\otimes c)\bigr),
$$
which becomes:
$$
E\bigl(1\otimes(\operatorname{id}\otimes\omega)(E(1\otimes c))\bigr)
=\bigl(1\otimes(\operatorname{id}\otimes\omega)(E(1\otimes c))\bigr)E.
$$
But, due to $E$ being full (see Proposition~\ref{Efull}), we know that the elements 
of the form $(\operatorname{id}\otimes\omega)\bigl(E(1\otimes c)\bigr)$ are dense in $B$. 
So we have: $E(1\otimes b)=(1\otimes b)E$, for all $b\in B$.

Now suppose $b'\in B$ and $\theta\in B^*$ be arbitrary.  Since $E(1\otimes b)=(1\otimes b)E$, 
we have:
$$
(\theta\otimes\operatorname{id})\bigl((b'\otimes1)E(1\otimes b)\bigr)
=(\theta\otimes\operatorname{id})\bigl((b'\otimes b)E\bigr),
$$
which becomes: $(\theta\otimes\operatorname{id})\bigl((b'\otimes1)E\bigr)b=
b(\theta\otimes\operatorname{id})\bigl((b'\otimes1)E\bigr)$.  But we know that the elements 
of the form $(\theta\otimes\operatorname{id})\bigl((b'\otimes1)E\bigr)$ are dense in $C$, 
again by Proposition~\ref{Efull}.  It follows that $cb=bc$, true for $\forall b\in B$, $\forall c\in C$.
\end{proof}

\begin{cor}
$M(B)$ and $M(C)$ also commute.
\end{cor}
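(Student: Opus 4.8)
The plan is to deduce this purely formally from the preceding proposition (that $B$ and $C$ commute inside $M(A)$), using only two standard facts: since $B$ and $C$ are non-degenerate $C^*$-subalgebras of $M(A)$, there are canonical inclusions $M(B)\subseteq M(A)$ and $M(C)\subseteq M(A)$; and $A$ is strictly dense in $M(A)$, so that an element $x\in M(A)$ with $xA=0$ must vanish. No new information about $E$ is needed.

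First I would show that every $m\in M(B)$ commutes with all of $C$. Fix $m\in M(B)$ and $c\in C$. For any $b\in B$ the element $mb$ lies in $B$, so, using that $B$ and $C$ commute,
$$
mcb=m(cb)=m(bc)=(mb)c=c(mb)=cmb .
$$
Hence $(mc-cm)b=0$ for every $b\in B$, i.e.\ $(mc-cm)B=0$, and therefore $(mc-cm)BA=0$. Since $\overline{BA}=A$ by non-degeneracy of $B$, this gives $(mc-cm)A=0$, and strict density of $A$ in $M(A)$ forces $mc-cm=0$; thus $mc=cm$ in $M(A)$.

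Next I would bootstrap to multipliers on both sides. Fix $m\in M(B)$ and $n\in M(C)$. For any $c\in C$ the element $nc$ lies in $C$, so by the previous step $m$ commutes with $nc$ and also with $c$, and therefore
$$
mnc=m(nc)=(nc)m=n(cm)=n(mc)=(nm)c .
$$
Thus $(mn-nm)c=0$ for all $c\in C$, i.e.\ $(mn-nm)C=0$; the same density/faithfulness argument as before, now using $\overline{CA}=A$, yields $mn=nm$ in $M(A)$.

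There is essentially no obstacle; the only point deserving a word of care is the passage from an identity such as $(mc-cm)B=0$ to $mc-cm=0$, which one justifies by right-multiplying by an approximate unit $(e_\lambda)$ of $A$ — legitimate because $\overline{BA}=A$ — and invoking strict convergence $(mc-cm)e_\lambda\to mc-cm$. Everything else is associativity in $M(A)$ together with the already-established commutation of $B$ and $C$.
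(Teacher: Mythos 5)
Your argument is correct and is exactly the routine density argument the paper has in mind; the paper's own proof consists of the single line ``Straightforward from the previous result,'' so you have simply supplied the details (non-degeneracy of $B$ and $C$ in $M(A)$ plus strict density of $A$) that the authors left implicit.
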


\begin{proof}
Straightforward from the previous result.
\end{proof}

In Proposition~\ref{Delta_on_BandC} and its Corollary below, we show how the comultiplication 
behaves on the source and the target algebras.  In fact, it will be shown later 
(Part~II: see \cite{BJKVD_qgroupoid2}) that these properties essentially characterize the 
algebras $M(B)$ and $M(C)$.

\begin{prop}\label{Delta_on_BandC}
We have:
\begin{itemize}
  \item $\Delta b=E(1\otimes b)=(1\otimes b)E$, for all $b\in B$;
  \item $\Delta c=(c\otimes1)E=E(c\otimes1)$, for all $c\in C$.
\end{itemize}
\end{prop}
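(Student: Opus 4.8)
The plan is to prove the two bullet points using the already-established facts about $E$ and the extension of $\Delta$, together with the defining formula for the canonical idempotent in condition~(3). Since $C$ is the right leg of $E$ and $B$ is the left leg, the key will be to slice the identity $(\operatorname{id}\otimes\Delta)(E)=(E\otimes1)(1\otimes E)$ (or the analogous $(\Delta\otimes\operatorname{id})(E)$ identity) against functionals on the appropriate legs.

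First I would prove $\Delta b = E(1\otimes b)$ for $b\in B$. Starting from $(\operatorname{id}\otimes\Delta)(E)=(E\otimes1)(1\otimes E)$, I would write $b$ (up to density) as $(\operatorname{id}\otimes\theta)(E(1\otimes\,\cdot\,))$-type elements; more efficiently, I would apply a slice $(\theta\otimes\operatorname{id}\otimes\operatorname{id})$ on the first leg, using $\theta\in B^*$, to both sides of $(\operatorname{id}\otimes\Delta)(E)=(E\otimes1)(1\otimes E)$. The left side becomes $\Delta\bigl((\theta\otimes\operatorname{id})(E)\bigr)$ (using that $\Delta$ is applied to the last two legs and the slice on the first leg commutes with it at the multiplier level), while the right side becomes $\bigl((\theta\otimes\operatorname{id})(E)\otimes 1\bigr)(1\otimes E)$ — wait, I need to be careful about which legs; the cleaner route is to slice the \emph{middle} leg. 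Actually the most direct approach: apply $(\operatorname{id}\otimes\operatorname{id}\otimes\omega)$ for $\omega\in C^*$ (or better, a functional on the third leg) and use fullness. Concretely, from condition~(3), slice the first leg of $(\Delta\otimes\operatorname{id})(E)=(E\otimes1)(1\otimes E)$ against $\theta\in B^*$: since elements $(\theta\otimes\operatorname{id})(E)$ are dense in $C$ by Proposition~\ref{Efull}, and since applying $\theta$ to the first leg of $(\Delta\otimes\operatorname{id})(E)$ gives $\Delta\bigl((\theta\otimes\operatorname{id})(E)\bigr)$, while applying it to $(E\otimes1)(1\otimes E)$ gives $\bigl((\theta\otimes\operatorname{id})(E)\bigr)\cdot$ something — I would track this carefully to land on $\Delta c = (c\otimes1)E$ for $c$ in a dense subset of $C$, then extend by continuity of $\Delta$ and boundedness of all operators involved. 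Symmetrically, slicing the \emph{last} leg of $(\operatorname{id}\otimes\Delta)(E)=(1\otimes E)(E\otimes1)$ against $\omega\in C^*$ and using that $(\operatorname{id}\otimes\omega)(E)$ ranges over a dense subset of $B$ yields $\Delta b = E(1\otimes b)$ for $b$ in a dense subset of $B$, again extended by continuity.

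The second equalities in each bullet — $E(1\otimes b)=(1\otimes b)E$ and $(c\otimes1)E = E(c\otimes1)$ — I would get essentially for free: the first is already established in the proof of the preceding proposition (that $B$ and $C$ commute showed $E(1\otimes b)=(1\otimes b)E$ for all $b\in B$), and the second follows by the mirror-image argument, or directly from Proposition~\ref{sepidgamma}-type manipulations combined with the commutation. So the genuinely new content is identifying $\Delta b$ and $\Delta c$ with these expressions built from $E$.

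The main obstacle I anticipate is bookkeeping: making sure the slice maps are applied on the correct leg and that the extension of $\Delta$ to $M(A)$ (Proposition~\ref{Deltaproperty}(4)) interacts correctly with these slices, i.e.\ justifying identities like $(\theta\otimes\operatorname{id}\otimes\operatorname{id})\bigl((\Delta\otimes\operatorname{id})(E)\bigr) = (\Delta\otimes\operatorname{id})$ applied appropriately, or rather $\Delta$ of a slice of $E$. One must check these at the level of bounded multipliers using the double-centralizer characterization rather than naive manipulation; the density/fullness statements in Proposition~\ref{Efull} and the continuity of $\Delta$ are exactly what make the passage from a dense subset of $B$ (resp.\ $C$) to all of $B$ (resp.\ $C$) legitimate. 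Once the leg-tracking is pinned down, each bullet is a two- or three-line computation.
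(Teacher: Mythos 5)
Your overall strategy---extract the proposition from axiom (3) by slicing against functionals on the outer legs and then invoke the fullness of $E$ (Proposition~\ref{Efull}) plus the commutation of $B$ and $C$ for the second equalities---is sound and is the same core idea as the paper's. But the one computation that carries all the weight is left unresolved, and the version you do write down is wrong. You claim that applying $\theta\in B^*$ to the \emph{first} leg of $(\Delta\otimes\operatorname{id})(E)$ gives $\Delta\bigl((\theta\otimes\operatorname{id})(E)\bigr)$. It does not: $\Delta$ spreads the first leg of $E$ over legs $1$ and $2$ of the triple product, so slicing leg $1$ of the result does not commute past $\Delta$. Likewise your ``symmetric'' step---slicing the last leg of $(\operatorname{id}\otimes\Delta)(E)$---produces $(\operatorname{id}\otimes[(\operatorname{id}\otimes\omega)\circ\Delta])(E)$ on the left, not $\Delta b$. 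The correct pairings are the opposite of what you wrote: for $\Delta c=(c\otimes1)E$ slice the \emph{first} leg of $(\operatorname{id}\otimes\Delta)(E)=(E\otimes1)(1\otimes E)$ against $\theta\in B^*$ (here $\Delta$ acts on legs $2$--$3$, so the first-leg slice passes through, giving $\Delta\bigl((\theta\otimes\operatorname{id})(E)\bigr)=\bigl((\theta\otimes\operatorname{id})(E)\otimes1\bigr)E$); for $\Delta b=E(1\otimes b)$ slice the \emph{third} leg of $(\Delta\otimes\operatorname{id})(E)=(E\otimes1)(1\otimes E)$ against $\omega\in C^*$. Since you explicitly defer this bookkeeping (``I would track this carefully to land on\dots''), the proposal as it stands has a gap precisely at the step that constitutes the proof.

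There is also the technical point you yourself identify as the main obstacle but never resolve: justifying that a functional slice of the extended map $(\operatorname{id}\otimes\tilde{\Delta})$ applied to the multiplier $E$ equals $\Delta$ of the corresponding slice of $E$. The paper avoids this entirely by a slightly different route: it computes $(\Delta\otimes\operatorname{id})\bigl((a\otimes c)E\bigr)$ for arbitrary $a\in A$, $c\in C$, uses $(\Delta a)(E\otimes 1)=\Delta a\otimes 1$ to reduce the right-hand side, slices the last leg with $\omega\in C^*$ to get $\Delta(ab)=(\Delta a)(1\otimes b)$ for $b$ ranging over a dense subset of $B$, and then applies the cancellation property of Proposition~\ref{Deltaproperty}\,(2) to strip off the $a$ and conclude $\Delta b=E(1\otimes b)$. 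Multiplying through by $a$ first keeps everything at the level where slices manifestly behave, and the cancellation lemma replaces the delicate ``slice commutes with the extension'' identity. If you want to keep your direct-slicing route, you must either prove that identity for the extended homomorphisms or fall back on the paper's $a$-multiplication trick. The part of your argument handling $E(1\otimes b)=(1\otimes b)E$ and $(c\otimes1)E=E(c\otimes1)$ via the previously established commutation of $B$ and $C$ (together with $E\in M(B\otimes C)$) is correct and matches the paper.
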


\begin{proof}
Let $a\in A$ be arbitrary, and let $c\in C$.  Then, from $(\Delta\otimes\operatorname{id})(E)
=(E\otimes1)(1\otimes E)$, we will have:
\begin{align}
(\Delta\otimes\operatorname{id})\bigl((a\otimes c)E\bigr)&=(\Delta a\otimes c)
(\Delta\otimes\operatorname{id})(E)  \notag \\
&=(\Delta a\otimes c)(E\otimes1)(1\otimes E)=(\Delta a\otimes c)(1\otimes E).
\notag
\end{align}
Let $\omega\in C^*$, and apply $\operatorname{id}\otimes\operatorname{id}\otimes\omega$ 
to both side.  Then it becomes:
$$
\Delta\bigl(a(\operatorname{id}\otimes\omega)((1\otimes c)E)\bigr)=
(\Delta a)\bigl(1\otimes(\operatorname{id}\otimes\omega)((1\otimes c)E)\bigr).
$$
But, by Proposition~\ref{Efull}, elements of the form $(\operatorname{id}\otimes\omega)
\bigl((1\otimes c)E\bigr)$, $c\in C$, $\omega\in C^*$ are dense in $B$.  It follows that
$\Delta(ab)=(\Delta a)\bigl(1\otimes b\bigr)$, for all $b\in B$.  Since this is true 
for any $a\in A$, and since $\Delta(ab)=(\Delta a)(\Delta b)$, we may apply 
Proposition~\ref{Deltaproperty}\,(2) to see that $\Delta b=E(1\otimes b)$, true 
for all $b\in B$. 
Moreover, since $E\in M(B\otimes C)$ and since we know that $M(B)$ and $M(C)$ commute, 
it follows that $E(1\otimes b)=(1\otimes b)E$, thereby proving the first statement.

The proof for $\Delta c=(c\otimes1)E=E(c\otimes1)$, $\forall c\in C$, is done similarly.
\end{proof}

\begin{cor}
We have:
\begin{itemize}
  \item $\Delta y=E(1\otimes y)=(1\otimes y)E$, for all $y\in M(B)$;
  \item $\Delta x=(x\otimes1)E=E(x\otimes1)$, for all $x\in M(C)$.
\end{itemize}
\end{cor}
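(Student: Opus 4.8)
The plan is to reduce the two statements to Proposition~\ref{Delta_on_BandC} by a density argument. First note that since $B$ and $C$ are non-degenerate $C^*$-subalgebras of $M(A)$ we have $M(B)\subseteq M(A)$ and $M(C)\subseteq M(A)$, so $\Delta y$ and $\Delta x$ are defined via the extension of $\Delta$ to $M(A)$ from Proposition~\ref{Deltaproperty}\,(4); also $1\otimes y$ and $x\otimes1$ belong to $M(A\otimes A)$, so the right-hand sides make sense. Two facts will be used repeatedly: Proposition~\ref{Delta_on_BandC}, giving $\Delta b=E(1\otimes b)=(1\otimes b)E$ for $b\in B$ and $\Delta c=(c\otimes1)E=E(c\otimes1)$ for $c\in C$; and the identity $E(\Delta m)=(\Delta m)E=\Delta m$ for every $m\in M(A)$, which follows from $\Delta$ being a ${}^*$-homomorphism on $M(A)$ with $\Delta(1_{M(A)})=E$ (Proposition~\ref{Deltaproperty}).

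To prove $\Delta y=E(1\otimes y)$ for $y\in M(B)$, fix $b\in B$; then $yb\in B$, and I compute $\Delta(yb)$ in two ways:
\begin{align}
\Delta(yb)&=(\Delta y)(\Delta b)=(\Delta y)E(1\otimes b)=(\Delta y)(1\otimes b),  \notag \\
\Delta(yb)&=E(1\otimes yb)=E(1\otimes y)(1\otimes b).  \notag
\end{align}
Subtracting gives $\bigl[(\Delta y)-E(1\otimes y)\bigr](1\otimes b)=0$ for all $b\in B$. Multiplying on the right by $a\otimes a'$ with $a,a'\in A$ turns this into $\bigl[(\Delta y)-E(1\otimes y)\bigr](a\otimes ba')=0$; since $B$ is non-degenerate in $M(A)$, the elements $ba'$ have norm-dense linear span in $A$, so the elements $a\otimes ba'$ have norm-dense linear span in $A\otimes A$, and hence $\bigl[(\Delta y)-E(1\otimes y)\bigr](A\otimes A)=0$. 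Therefore $\Delta y=E(1\otimes y)$ in $M(A\otimes A)$.

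The equality $\Delta y=(1\otimes y)E$ is obtained symmetrically using $by\in B$: from $\Delta(by)=(\Delta b)(\Delta y)=(1\otimes b)(\Delta y)$ and $\Delta(by)=(1\otimes by)E=(1\otimes b)(1\otimes y)E$ one gets $(1\otimes b)\bigl[(\Delta y)-(1\otimes y)E\bigr]=0$ for all $b\in B$, and left-multiplying by $a\otimes a'$ and invoking the non-degeneracy of $B$ again yields $\Delta y=(1\otimes y)E$. The statement for $x\in M(C)$ is entirely parallel: one runs the same two computations with $xc,\,cx\in C$ in place of $yb,\,by$, using $\Delta c=(c\otimes1)E=E(c\otimes1)$ and the non-degeneracy of $C$ in $M(A)$. (Alternatively, one can observe that the extended $\Delta$ is strictly continuous on bounded subsets of $M(A)$---because $\Delta(A)(A\otimes A)$ is norm-dense in $E(A\otimes A)$ and $\Delta$ is contractive---and pass to the limit along $ye_\lambda\to y$ strictly, with $(e_\lambda)$ an approximate unit of $B$.) The only delicate point is the passage from ``annihilates every $1\otimes b$ with $b\in B$'' to ``is the zero multiplier'', which is precisely where the non-degeneracy of $B$ (respectively $C$) inside $M(A)$ is essential; everything else is routine multiplier-algebra bookkeeping.
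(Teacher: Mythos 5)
Your proof is correct and follows essentially the same route as the paper: compute $\Delta(yb)$ in two ways via Proposition~\ref{Delta_on_BandC} and the homomorphism property of the extended $\Delta$, then cancel the factor $1\otimes b$ using the non-degeneracy of $B$ in $M(A)$. The only (harmless) divergence is in the second equality $\Delta y=(1\otimes y)E$: the paper gets it at once from $E\in M(B\otimes C)$ and the commutativity of $M(B)$ and $M(C)$, whereas you rerun the computation symmetrically with $by\in B$; both are fine.
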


\begin{proof}
Let $y\in M(B)$.  Then for any $b\in B$, we have $yb\in B$.  Then by 
Proposition~\ref{Delta_on_BandC}, 
$$
\Delta(yb)=E(1\otimes yb)=E(1\otimes y)(1\otimes b).
$$
On the other hand,
$$
\Delta(yb)=(\Delta y)(\Delta b)=(\Delta y)E(1\otimes b)=(\Delta y)(1\otimes b).
$$
It follows that $(\Delta y)(1\otimes b)=E(1\otimes y)(1\otimes b)$, for any $b\in B$. 
But note that $B$ is a non-degenerate $C^*$-subalgebra of $M(A)$.  Thus it follows 
easily that $(\Delta y)(1\otimes m)=E(1\otimes y)(1\otimes m)$, for any $m\in M(A)$. 
In other words, $\Delta y=E(1\otimes y)$.  As before, using the fact that $M(B)$ 
and $M(C)$ commute, we also have: $\Delta y=(1\otimes y)E$.

The proof for $\Delta x=(x\otimes1)E=E(x\otimes1)$, $\forall x\in M(C)$, is similar.
\end{proof}

\section{Left and Right Haar weights}

Assume that we are given $(A,\Delta,E,B,\nu)$, where $A$ is a $C^*$-algebra; $\Delta$ 
is a comultiplication on $A$; $B$ is a non-degenerate $C^*$-subalgebra of $M(A)$; 
$\nu$ is a KMS weight on $B$; and $E$ is the canonical idempotent element 
as described in the previous section.  

Following the spirit of the authors' previous paper at the level of weak multiplier 
Hopf algebras \cite{BJKVD_LSthm} (see also the discussion given in Introduction), 
we will further require the existence of a left invariant weight $\varphi$ and 
a right invariant weight $\psi$ on $(A,\Delta)$.  The definitions for the invariant 
weights (``Haar weights'') will be given in this section, together with their 
properties.  

\subsection{Definition of the invariant weights $\varphi$ and $\psi$}

In the commutative case when $A=C_0(G)$, the weights $\varphi$ and $\psi$ are the 
functionals that come from combining the left/right Haar systems with the weight 
$\nu$ at the level of the unit space.  Recall the discussion given in Introduction, 
including the definition of the left invariance given in 
Equation~\eqref{(groupoid_leftinvariance)} and the definition of the functional 
$\varphi$ given in Equation~\eqref{(groupoid_leftHaar)}.  The following definition 
may not look obvious, but turns out to be a suitable generalization to our non-commutative 
setting.

\begin{defn}\label{leftrightinvariance}
Let the notation be as above.
\begin{enumerate}
  \item A KMS weight $\varphi$ on $A$ is said to be {\em left invariant\/}, 
  if for any $a\in{\mathfrak M}_{\varphi}$, we have: 
$\Delta a\in\overline{\mathfrak M}_{\operatorname{id}\otimes\varphi}$ and 
$(\operatorname{id}\otimes\varphi)(\Delta a)\in M(C)$.
  \item A KMS weight $\psi$ on $A$ is said to be {\em right invariant\/}, 
  if for any $a\in{\mathfrak M}_{\varphi}$, we have: 
$\Delta a\in\overline{\mathfrak M}_{\psi\otimes\operatorname{id}}$ and 
$(\psi\otimes\operatorname{id})(\Delta a)\in M(B)$.
\end{enumerate}
\end{defn}

\begin{prop}\label{invarianceprop}
Suppose $\varphi$ and $\psi$ are left and right invariant weights, respectively, for 
$(A,\Delta)$.  Then as a consequence of Definition~\ref{leftrightinvariance}, 
we have:
\begin{itemize}
  \item If $a\in{\mathfrak N}_{\varphi}$, we have: 
$\Delta a\in\overline{\mathfrak N}_{\operatorname{id}\otimes\varphi}$.
  \item If $a\in{\mathfrak N}_{\psi}$, we have: 
$\Delta a\in\overline{\mathfrak N}_{\psi\otimes\operatorname{id}}$.
\end{itemize}
\end{prop}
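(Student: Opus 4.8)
The plan is to deduce both bullet points directly from the defining property of the invariant weights (Definition~\ref{leftrightinvariance}) together with the fact that $\Delta$ is a ${}^*$-homomorphism, so that the whole argument reduces to applying $\Delta$ to a positive element of ${\mathfrak M}_\varphi$ (resp.\ ${\mathfrak M}_\psi$) and then unwinding the definition of $\overline{\mathfrak N}_{\operatorname{id}\otimes\varphi}$ recalled in Section~1.

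First I would take $a\in{\mathfrak N}_\varphi$, so that $a^*a\in{\mathfrak M}_\varphi^+\subseteq{\mathfrak M}_\varphi$. Applying left invariance (Definition~\ref{leftrightinvariance}\,(1)) to the element $a^*a$ gives $\Delta(a^*a)\in\overline{\mathfrak M}_{\operatorname{id}\otimes\varphi}$. Since $\Delta$ is a ${}^*$-homomorphism into $M(A\otimes A)$, we have $\Delta(a^*a)=(\Delta a)^*(\Delta a)$, and this is a positive element of $M(A\otimes A)$ lying in the ${}^*$-subalgebra $\overline{\mathfrak M}_{\operatorname{id}\otimes\varphi}$; hence it lies in its positive part $\overline{\mathfrak M}_{\operatorname{id}\otimes\varphi}^+$. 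By the definition $\overline{\mathfrak N}_{\operatorname{id}\otimes\varphi}=\{x\in M(A\otimes A):x^*x\in\overline{\mathfrak M}_{\operatorname{id}\otimes\varphi}^+\}$, this is precisely the statement that $\Delta a\in\overline{\mathfrak N}_{\operatorname{id}\otimes\varphi}$. The second assertion is obtained in the same way, now applying right invariance (Definition~\ref{leftrightinvariance}\,(2)) to $a^*a$ for $a\in{\mathfrak N}_\psi$ and using the slice $\psi\otimes\operatorname{id}$ in place of $\operatorname{id}\otimes\varphi$.

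There is essentially no obstacle here; the only point worth a word of care is the passage from ``$\Delta(a^*a)$ lies in the ${}^*$-algebra $\overline{\mathfrak M}_{\operatorname{id}\otimes\varphi}$'' to ``$\Delta(a^*a)$ lies in the cone $\overline{\mathfrak M}_{\operatorname{id}\otimes\varphi}^+$'', which is immediate because, exactly as for ${\mathfrak M}_\psi$ in Section~1, the space $\overline{\mathfrak M}_{\operatorname{id}\otimes\varphi}$ is a ${}^*$-subalgebra of $M(A\otimes A)$ whose positive part is precisely $\overline{\mathfrak M}_{\operatorname{id}\otimes\varphi}^+$. One should also note in passing that $\Delta a\in M(A\otimes A)$ since $a\in A$, so that it is legitimate to test membership of $\Delta a$ in $\overline{\mathfrak N}_{\operatorname{id}\otimes\varphi}$.
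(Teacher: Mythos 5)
Your argument is correct and is essentially the paper's own proof: pass from $a\in{\mathfrak N}_\varphi$ to $a^*a\in{\mathfrak M}_\varphi^+$, apply the invariance condition to get $\Delta(a^*a)\in\overline{\mathfrak M}_{\operatorname{id}\otimes\varphi}$, and use $\Delta(a^*a)=(\Delta a)^*(\Delta a)$ to conclude. The extra remark about positivity is fine but not really needed, since the defining condition of Definition~\ref{leftrightinvariance} applied to the positive element $a^*a$ already places $\Delta(a^*a)$ in the positive part $\overline{\mathfrak M}_{\operatorname{id}\otimes\varphi}^+$, which is exactly the membership criterion for $\overline{\mathfrak N}_{\operatorname{id}\otimes\varphi}$.
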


\begin{proof}
We will just prove the first statement.  Note that by definition, we have 
$a\in{\mathfrak N}_{\varphi}$ if and only if $a^*a\in{\mathfrak M}_{\varphi}^+$. 
Then by Definition~\ref{leftrightinvariance}, we have $\Delta(a^*a)
\in\overline{\mathfrak M}_{\operatorname{id}\otimes\varphi}$.  But $\Delta(a^*a)
=\Delta(a)^*\Delta(a)$, because $\Delta$ is a ${}^*$-homomorphism.  It follows 
that $\Delta(a)\in\overline{\mathfrak N}_{\operatorname{id}\otimes\varphi}$.
\end{proof}

\begin{rem}
In the commutative case, the expression $(\operatorname{id}\otimes\varphi)(\Delta a)$ 
corresponds to $\int_{G^{s(x)}}a(xz)\,d\lambda^{s(x)}(z)$, because the product $xz$ 
is valid only when $t(z)=s(x)$.  The left invariance condition for an ordinary locally compact 
groupoid, given in Equation~\eqref{(groupoid_leftinvariance)}, says that this is equal to 
$\int_{G^{t(x)}}a\,d\lambda^{t(x)}$.  But the map $x\mapsto\int_{G^{t(x)}}a\,d\lambda^{t(x)}$ 
is really a function in $t(x)$, so contained in $M(C)$.  Considering this, the left invariance 
condition given above in Definition~\ref{leftrightinvariance} makes sense.  In fact, except 
for the issues concerning the unboundedness, this is essentially the way the left/right 
invariance condition is formulated in the theory of weak multiplier Hopf algebras and 
the algebraic quantum groupoids \cite{VDWangwha3}.

In a different approach, in the measured quantum groupoid framework \cite{LesSMF}, 
\cite{EnSMF}, the left/right invariance conditions are formulated in terms of 
operator-valued weights $T_L$ and $T_R$.  In our case, they would correspond to 
$a\mapsto(\operatorname{id}\otimes\varphi)(\Delta a)$ and $a\mapsto(\psi\otimes
\operatorname{id})(\Delta a)$.  In the next subsection (Proposition~\ref{nuphipsi}), 
we further discuss the relationships between these operator-valued weights and 
our weights $\varphi$ and $\psi$.
\end{rem}

In what follows, assume $\psi$ is a right invariant weight.  We gather some technical results 
that follow as a consequence.  

\begin{prop}\label{idomegadelta}
If $a\in{\mathfrak M}_{\psi}$ (so $\Delta a\in\overline{\mathfrak M}_{\psi\otimes\operatorname{id}}$) 
and $\omega\in A^*$, then we have $(\operatorname{id}\otimes\omega)(\Delta a)
\in\overline{\mathfrak M}_{\psi}$ and $\psi\bigl((\operatorname{id}\otimes\omega)(\Delta a)\bigr)
=\omega\bigl((\psi\otimes\operatorname{id})(\Delta a)\bigr)$.
\end{prop}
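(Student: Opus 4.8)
The plan is to deduce this statement as an immediate instance of Proposition~\ref{slice_lem1}. That proposition is stated for an arbitrary pair of $C^*$-algebras together with a proper weight on the first one; here I would apply it with the second $C^*$-algebra taken to be $A$ itself (so that the generic ``$B^*$'' there becomes our $A^*$), with the proper weight being $\psi$, and with the slice map $\psi\otimes\operatorname{id}\colon M(A\otimes A)\to M(A)$ reviewed in Section~1.

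First I would note that $\psi$, being a KMS weight (Definition~\ref{KMSweight}), is in particular a proper weight: it is non-zero, faithful, semi-finite and lower semi-continuous, so the slicing machinery of Section~1 indeed applies. Next, the hypothesis $a\in{\mathfrak M}_\psi$ together with the right invariance of $\psi$ (Definition~\ref{leftrightinvariance}(2)) gives $\Delta a\in\overline{\mathfrak M}_{\psi\otimes\operatorname{id}}$, which is exactly the element to which Proposition~\ref{slice_lem1} is to be applied; this is already recorded parenthetically in the statement. Then Proposition~\ref{slice_lem1}, applied to $x=\Delta a\in\overline{\mathfrak M}_{\psi\otimes\operatorname{id}}$ and to the functional $\omega\in A^*$, yields at once both $(\operatorname{id}\otimes\omega)(\Delta a)\in\overline{\mathfrak M}_\psi$ and $\psi\bigl((\operatorname{id}\otimes\omega)(\Delta a)\bigr)=\omega\bigl((\psi\otimes\operatorname{id})(\Delta a)\bigr)$, which is precisely the asserted conclusion.

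There is essentially no obstacle here; the statement is a bookkeeping corollary of the general weight-slicing results collected in Section~1. The only point requiring a moment's care is the notational clash: the ``$B$'' appearing in Proposition~\ref{slice_lem1} is a generic auxiliary $C^*$-algebra and in this application plays the role of $A$, and should not be confused with the source algebra $B\subseteq M(A)$ of Definition~\ref{canonicalidempotent}. If one preferred not to invoke Proposition~\ref{slice_lem1} as a black box, one could instead unwind its proof: for $x\in\overline{\mathfrak M}_{\psi\otimes\operatorname{id}}^+$ the net $\bigl((\theta\otimes\operatorname{id})(x)\bigr)_{\theta\in{\mathcal G}_\psi}$ converges strictly to $(\psi\otimes\operatorname{id})(x)$, apply $\omega$ (first for $\omega\in A^*_+$, then in general by writing $\omega$ as a linear combination of positive functionals) and use the lower semi-continuity of $\psi$ together with $\psi(x)=\lim_{\theta\in{\mathcal G}_\psi}\theta(x)$; but this merely reproduces the cited argument and adds nothing.
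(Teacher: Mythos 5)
Your proposal is correct and coincides with the paper's own proof, which likewise observes that $\Delta a\in\overline{\mathfrak M}_{\psi\otimes\operatorname{id}}$ (from right invariance) and then invokes Proposition~\ref{slice_lem1} with the auxiliary algebra taken to be $A$ itself. The extra remarks on the notational clash and on unwinding the cited proposition are fine but not needed.
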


\begin{proof}
Since $\Delta a\in\overline{\mathfrak M}_{\psi\otimes\operatorname{id}}$, this is a consequence 
of Proposition~\ref{slice_lem1}.
\end{proof}

\begin{lem}\label{omegabar}
\begin{enumerate}
  \item Let $\omega$ be a continuous linear functional on $A$.  Then there exists a unique 
positive linear functional $|\omega|$ on $A$ such that $\bigl\||\omega|\bigr\|=\|\omega\|$ 
and $\overline{\omega(a)}\omega(a)=\bigl|\omega(a)\bigr|^2\le\|\omega\|\,|\omega|(a^*a)$, 
for $a\in A$.
  \item For $z\in M(A\otimes A)$ and $\omega\in A^*$, we have:
$$
(\operatorname{id}\otimes\omega)(z)^*(\operatorname{id}\otimes\omega)(z)\le\|\omega\|
\bigl(\operatorname{id}\otimes|\omega|\bigr)(z^*z).
$$
\end{enumerate}
\end{lem}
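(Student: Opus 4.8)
The plan is to prove (1) first by a standard polarization/Jordan-decomposition argument for functionals on a $C^*$-algebra, and then deduce (2) from (1) by slicing.

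For part (1): Given $\omega\in A^*$, consider its polar decomposition in $A^{**}$. More concretely, I would use the known fact that every $\omega\in A^*$ admits a (unique) decomposition $\omega=u\cdot|\omega|$ in the sense of the polar decomposition of normal functionals on the von Neumann algebra $A^{**}$, where $|\omega|$ is a positive functional with $\||\omega|\|=\|\omega\|$ and $|\omega|(x)=\omega(u^*x)$ for a partial isometry $u\in A^{**}$ (the modulus of $\omega$). Restricting $|\omega|$ back to $A$ gives the desired positive functional. The inequality $|\omega(a)|^2\le\|\omega\|\,|\omega|(a^*a)$ then follows from the Cauchy--Schwarz inequality applied to the sesquilinear form $(x,y)\mapsto|\omega|(y^*x)$: writing $\omega(a)=|\omega|(u^*a)$ (interpreting $a$ in $A^{**}$) and applying Cauchy--Schwarz for $|\omega|$, one gets $|\omega(a)|^2=||\omega|(u^*a)|^2\le|\omega|(u^*u)\,|\omega|(a^*a)\le\|\omega\|\,|\omega|(a^*a)$, since $u^*u$ is a projection so $|\omega|(u^*u)\le\|\omega\|$. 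Uniqueness of $|\omega|$ with the stated norm condition is part of the standard theory. This part is essentially a citation to \cite{Tk2} or \cite{Str}; I would state it cleanly and reference those.

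For part (2): Fix $z\in M(A\otimes A)$ and $\omega\in A^*$. The plan is to test both sides against positive functionals on $A$. Let $\rho\in A^*_+$ be arbitrary. Then $(\rho\otimes\omega)$ is a functional on $A\otimes A$ (extended to the multiplier algebra), and $(\rho\otimes|\omega|)$ is a positive functional. Applying part (1) of the lemma to the functional $(\rho\otimes\omega)$ evaluated at $z$ — or rather, the more convenient route: note that $\rho\bigl((\operatorname{id}\otimes\omega)(z)\bigr)=(\rho\otimes\omega)(z)$, so by part (1),
$$
\bigl|\rho\bigl((\operatorname{id}\otimes\omega)(z)\bigr)\bigr|^2\le\|\rho\otimes\omega\|\,|\rho\otimes\omega|(z^*z).
$$
The subtlety is identifying $|\rho\otimes\omega|$ with $\rho\otimes|\omega|$ (which holds because $\rho$ is already positive, so its modulus is itself, and the modulus of a product splits) and $\|\rho\otimes\omega\|=\|\rho\|\,\|\omega\|$. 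Granting these, the right side is $\|\rho\|\,\|\omega\|\,(\rho\otimes|\omega|)(z^*z)=\|\omega\|\,\rho\bigl((\operatorname{id}\otimes|\omega|)(z^*z)\bigr)$. Since $\rho\in A^*_+$ was arbitrary and positive functionals separate points and detect the order on $A$ (equivalently on $M(A)$), this yields the operator inequality
$$
(\operatorname{id}\otimes\omega)(z)^*(\operatorname{id}\otimes\omega)(z)\le\|\omega\|\,(\operatorname{id}\otimes|\omega|)(z^*z).
$$
A small point: to be careful one should first replace $\rho$ by $\rho(b^*\,\cdot\,b)$ for $b\in A$, or just apply the scalar inequality to $\rho=\omega_{\xi}$ vector states in a faithful representation, which immediately gives the operator inequality on $A\otimes A$ and hence on multipliers.

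The main obstacle I anticipate is the clean justification of $|\rho\otimes\omega|=\rho\otimes|\omega|$ and the norm identity $\|\rho\otimes\omega\|=\|\rho\|\,\|\omega\|$ for functionals on the (minimal) tensor product, together with being careful about extending everything to the multiplier algebra $M(A\otimes A)$. These are true but a bit fiddly; the cleanest workaround is to avoid the general tensor-product-functional norm computation and instead argue directly: embed $A$ in $B(\mathcal H)$ via a faithful representation, write the scalar inequality $|\langle(\operatorname{id}\otimes\omega)(z)\xi,\eta\rangle|^2\le\|\omega\|\,\langle(\operatorname{id}\otimes|\omega|)(z^*z)\xi,\xi\rangle$ using part (1) with the functional $a\mapsto\langle (a\otimes\operatorname{id})\text{-type slice}\rangle$... — but the genuinely shortest path is the one above with $\rho$ ranging over $A^*_+$, invoking that this is the standard slice-map Cauchy--Schwarz inequality whose proof is exactly part (1) applied fiberwise. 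I would present it in that compact form.
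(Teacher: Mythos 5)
The paper does not actually prove this lemma: part (1) is cited to Proposition 3.6.7 of \cite{Pedersenbook} and part (2) to Lemma 3.10 of \cite{KuVaweightC*}. Your part (1) is exactly the standard polar-decomposition argument behind the cited result, and is fine (modulo keeping the convention $\omega(x)=|\omega|(ux)$ straight so that the Cauchy--Schwarz step produces $a^*a$ rather than $aa^*$).

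Part (2), however, has a genuine gap at the last step. Writing $T=(\operatorname{id}\otimes\omega)(z)$ and $S=\|\omega\|(\operatorname{id}\otimes|\omega|)(z^*z)$, what your argument establishes (granting $|\rho\otimes\omega|=\rho\otimes|\omega|$ and the norm identity, which are indeed true) is the scalar inequality $|\rho(T)|^2\le\|\rho\|\,\rho(S)$ for every $\rho\in A^*_+$. This does \emph{not} imply $T^*T\le S$: testing $|\rho(T)|^2$ against positive functionals only controls the numerical radius of $T$, not $\rho(T^*T)$. A concrete obstruction: in $M_2$, let $T$ be the nilpotent matrix with a single $1$ in the upper-right corner and $S=\tfrac14\cdot 1$; then $|\rho(T)|\le\tfrac12\|\rho\|$ for all positive $\rho$ (numerical radius $\tfrac12$), so $|\rho(T)|^2\le\|\rho\|\rho(S)$ holds, yet $T^*T=\operatorname{diag}(0,1)\not\le\tfrac14\cdot1$. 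Your proposed patches (replacing $\rho$ by $\rho(b^*\,\cdot\,b)$, or restricting to vector states) do not repair this, since they still only yield inequalities of the form $|\rho(b^*Tb)|^2\le(\cdots)$ rather than $\rho(b^*T^*Tb)\le(\cdots)$. The standard correct route (as in \cite{KuVaweightC*}) works at the operator level: using the polar decomposition $\omega=u|\omega|$ and the GNS triple $({\mathcal H}_{|\omega|},\pi_{|\omega|},\xi)$ for $|\omega|$, write $\omega(a)=\bigl\langle\pi_{|\omega|}(a)\xi,\eta\bigr\rangle$ with $\eta=\pi_{|\omega|}''(u^*)\xi$ and $\|\eta\|^2\le\|\xi\|^2=\|\omega\|$; then $(\operatorname{id}\otimes\omega)(z)=V_\eta^*\,(\operatorname{id}\otimes\pi_{|\omega|})(z)\,V_\xi$ for the isometric-type maps $V_\xi v=v\otimes\xi$, $V_\eta v=v\otimes\eta$, and the inequality follows from $V_\eta V_\eta^*\le\|\omega\|$ together with $V_\xi^*(\operatorname{id}\otimes\pi_{|\omega|})(z^*z)V_\xi=(\operatorname{id}\otimes|\omega|)(z^*z)$.
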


\begin{proof}
(1). This is a standard result.  See, for instance, Proposition~3.6.7 of \cite{Pedersenbook}.

(2). This is a consequence of (1).  For proof, see Lemma~3.10 of \cite{KuVaweightC*}.
\end{proof}

\begin{prop}\label{prop_omegabar}
If $a\in{\mathfrak N}_{\psi}$ and $\omega\in A^*$, then $(\operatorname{id}\otimes\omega)
(\Delta a)\in\overline{\mathfrak N}_{\psi}$ and we have: 
$$
\psi\bigl((\operatorname{id}\otimes\omega)(\Delta a)^*(\operatorname{id}\otimes\omega)
(\Delta a)\bigr)\le\|\omega\|\,|\omega|\bigl((\psi\otimes\operatorname{id})(\Delta(a^*a))\bigr).
$$
\end{prop}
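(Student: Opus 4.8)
The plan is to reduce this proposition to the two preceding facts: Lemma~\ref{omegabar}\,(2), applied with $z=\Delta a$, and Proposition~\ref{idomegadelta}, applied with the weight $|\omega|$ in place of $\omega$. First I would record that, since $\Delta$ is a ${}^*$-homomorphism, $(\operatorname{id}\otimes\omega)(\Delta a)^*(\operatorname{id}\otimes\omega)(\Delta a)$ is dominated in $A^+$ by $\|\omega\|\,(\operatorname{id}\otimes|\omega|)\bigl((\Delta a)^*(\Delta a)\bigr)=\|\omega\|\,(\operatorname{id}\otimes|\omega|)(\Delta(a^*a))$, which is exactly Lemma~\ref{omegabar}\,(2) with $z=\Delta a$. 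So it suffices to show that the right-hand side lies in $\overline{\mathfrak M}_\psi^+$ and that applying $\psi$ to it gives $\|\omega\|\,|\omega|\bigl((\psi\otimes\operatorname{id})(\Delta(a^*a))\bigr)$; the claimed inequality and the membership $(\operatorname{id}\otimes\omega)(\Delta a)\in\overline{\mathfrak N}_\psi$ then follow from lower semicontinuity of $\psi$ together with its monotonicity on $A^+$ (if $0\le x\le y$ and $\psi(y)<\infty$ then $x\in\mathfrak M_\psi^+$).

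The middle step is therefore: since $a\in{\mathfrak N}_\psi$, we have $a^*a\in{\mathfrak M}_\psi^+\subseteq{\mathfrak M}_\psi$, and right invariance of $\psi$ (Definition~\ref{leftrightinvariance}\,(2), as used in Proposition~\ref{idomegadelta}) gives $\Delta(a^*a)\in\overline{\mathfrak M}_{\psi\otimes\operatorname{id}}$. Now $|\omega|$ is a positive continuous functional on $A$ by Lemma~\ref{omegabar}\,(1), hence in particular $|\omega|\in A^*$, so Proposition~\ref{idomegadelta} applies verbatim with $\omega$ replaced by $|\omega|$ and $a$ replaced by $a^*a$: it yields $(\operatorname{id}\otimes|\omega|)(\Delta(a^*a))\in\overline{\mathfrak M}_\psi$ and the identity
$$
\psi\bigl((\operatorname{id}\otimes|\omega|)(\Delta(a^*a))\bigr)=|\omega|\bigl((\psi\otimes\operatorname{id})(\Delta(a^*a))\bigr).
$$
Since $(\operatorname{id}\otimes|\omega|)(\Delta(a^*a))$ is positive (as $|\omega|$ is positive and $\Delta(a^*a)\ge0$), it in fact lies in $\overline{\mathfrak M}_\psi^+$, so its scalar multiple by $\|\omega\|$ is a positive element of finite $\psi$-value.

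Finally I would combine the two: from $0\le(\operatorname{id}\otimes\omega)(\Delta a)^*(\operatorname{id}\otimes\omega)(\Delta a)\le\|\omega\|\,(\operatorname{id}\otimes|\omega|)(\Delta(a^*a))$ and the fact that the majorant is in $\mathfrak M_\psi^+$ (all at the multiplier-algebra level, i.e.\ in $\overline{\mathfrak M}_\psi$), monotonicity of $\bar\psi$ gives $(\operatorname{id}\otimes\omega)(\Delta a)^*(\operatorname{id}\otimes\omega)(\Delta a)\in\overline{\mathfrak M}_\psi^+$, i.e.\ $(\operatorname{id}\otimes\omega)(\Delta a)\in\overline{\mathfrak N}_\psi$, and
$$
\psi\bigl((\operatorname{id}\otimes\omega)(\Delta a)^*(\operatorname{id}\otimes\omega)(\Delta a)\bigr)\le\|\omega\|\,\psi\bigl((\operatorname{id}\otimes|\omega|)(\Delta(a^*a))\bigr)=\|\omega\|\,|\omega|\bigl((\psi\otimes\operatorname{id})(\Delta(a^*a))\bigr),
$$
which is the assertion. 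I do not anticipate a serious obstacle here; the one point requiring a little care is that all the manipulations (the operator inequality, membership in $\overline{\mathfrak M}$, the slice identity) are taking place at the level of $M(A)$ rather than $A$, so I would be slightly careful to invoke the multiplier-level extensions $\bar\psi$, $\overline{\mathfrak M}_\psi$, $\overline{\mathfrak N}_\psi$ and the multiplier-level version of the monotonicity property, all of which are available from Section~1. A secondary subtlety is checking that Proposition~\ref{idomegadelta} really does apply with the general functional $|\omega|\in A^*$ (it is stated for $\omega\in A^*$, so this is immediate) and that $a^*a\in{\mathfrak M}_\psi$ triggers its hypothesis (it does, since $a\in{\mathfrak N}_\psi$).
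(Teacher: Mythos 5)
Your proposal is correct and follows essentially the same route as the paper: both rest on Lemma~\ref{omegabar}\,(2) applied to $z=\Delta a$ and on Proposition~\ref{idomegadelta} applied with $|\omega|$ and $a^*a$ to evaluate $\psi$ of the majorant. The only (harmless) difference is that the paper gets the membership $(\operatorname{id}\otimes\omega)(\Delta a)\in\overline{\mathfrak N}_{\psi}$ directly from the slicing theory via $\Delta a\in\overline{\mathfrak N}_{\psi\otimes\operatorname{id}}$ (Proposition~\ref{invarianceprop}), whereas you deduce it from the operator inequality together with monotonicity of $\bar\psi$ on positives, which is equally valid and slightly more self-contained.
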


\begin{proof}
By the right invariance of $\psi$ (see Proposition~\ref{invarianceprop}), we know that 
$\Delta a\in\overline{\mathfrak N}_{\psi\otimes\operatorname{id}}$ and that 
$(\operatorname{id}\otimes\omega)(\Delta a)\in\overline{\mathfrak N}_{\psi}$.  So 
$(\operatorname{id}\otimes\omega)(\Delta a)^*(\operatorname{id}\otimes\omega)
(\Delta a)\in\overline{\mathfrak M}_{\psi}$.  But by Lemma~\ref{omegabar}, we have:
$$
(\operatorname{id}\otimes\omega)(\Delta a)^*(\operatorname{id}\otimes\omega)(\Delta a)
\le\|\omega\|\bigl(\operatorname{id}\otimes|\omega|\bigr)\bigl(\Delta(a^*a)\bigr).
$$
Apply $\psi$ to both sides.  By Proposition~\ref{idomegadelta}, we have 
$\psi\bigl((\operatorname{id}\otimes|\omega|)(\Delta(a^*a))\bigr)
=|\omega|\bigl((\psi\otimes\operatorname{id})(\Delta(a^*a))\bigr)$, from which 
the result of the proposition follows.
\end{proof}

The following result is a consequence of the generalized Cauchy--Schwarz inequality 
given in Proposition~\ref{cauchyschwarz}.

\begin{prop}\label{delta_cs}
Let $a\in{\mathfrak N}_{\psi}$, $b\in\overline{\mathfrak N}_{\psi}$, and $x\in M(A\otimes A)$. 
Then we have $\Delta(a^*)x(b\otimes1)\in\overline{\mathfrak M}_{\psi\otimes\operatorname{id}}$, 
and
\begin{align}
&(\psi\otimes\operatorname{id})\bigl(\Delta(a^*)x(b\otimes1)\bigr)^*
\,(\psi\otimes\operatorname{id})\bigl(\Delta(a^*)x(b\otimes1)\bigr)   \notag \\
&\le\,\bigl\|(\psi\otimes\operatorname{id})(\Delta(a^*a))\bigr\|
\,(\psi\otimes\operatorname{id})\bigl((b^*\otimes1)x^*x(b\otimes1)\bigr).
\notag
\end{align}
\end{prop}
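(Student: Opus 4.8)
The plan is to recognize this as a direct application of the generalized Cauchy--Schwarz inequality from Proposition~\ref{cauchyschwarz}, once we verify that the two elements $\Delta(a)(b\otimes1)$ and $x^*\Delta(a)$ (or rather the appropriate substitutions) lie in $\overline{\mathfrak N}_{\psi\otimes\operatorname{id}}$. First I would set $u:=\Delta(a)(b\otimes1)$ and observe that by the right invariance of $\psi$ (Proposition~\ref{invarianceprop}), $\Delta(a)\in\overline{\mathfrak N}_{\psi\otimes\operatorname{id}}$; since $\overline{\mathfrak N}_{\psi\otimes\operatorname{id}}$ is a left ideal in $M(A\otimes A)$, right multiplication by $b\otimes1$ is not automatically allowed, so instead I would use that $\overline{\mathfrak N}_{\psi\otimes\operatorname{id}}$ being a left ideal means I should write things so that the bad factors sit on the left. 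Concretely, rewrite $\Delta(a^*)x(b\otimes1) = \bigl(\Delta(a^*)\bigr)\bigl(x(b\otimes1)\bigr)$ and note $\Delta(a^*)=\Delta(a)^*$, so we want $\Delta(a)\in\overline{\mathfrak N}_{\psi\otimes\operatorname{id}}$ (which holds) and $x(b\otimes1)\in\overline{\mathfrak N}_{\psi\otimes\operatorname{id}}$.

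The key step, then, is to show $x(b\otimes1)\in\overline{\mathfrak N}_{\psi\otimes\operatorname{id}}$ whenever $b\in\overline{\mathfrak N}_\psi$ and $x\in M(A\otimes A)$. Since $\overline{\mathfrak N}_{\psi\otimes\operatorname{id}}$ is a left ideal in $M(A\otimes A)$, it suffices to show $b\otimes1\in\overline{\mathfrak N}_{\psi\otimes\operatorname{id}}$; then $x(b\otimes1)$ lies in it automatically. To see $b\otimes1\in\overline{\mathfrak N}_{\psi\otimes\operatorname{id}}$, compute $(b\otimes1)^*(b\otimes1)=b^*b\otimes1$ and check, using the characterization via the net $\bigl((\theta\otimes\operatorname{id})(b^*b\otimes1)\bigr)_{\theta\in\mathcal{G}_\psi}=\bigl(\theta(b^*b)\cdot1\bigr)_\theta$, that this net converges strictly in $M(A)$; indeed $b\in\overline{\mathfrak N}_\psi$ means $\psi(b^*b)<\infty$, so $\theta(b^*b)$ converges to $\psi(b^*b)$ and the net $\theta(b^*b)\cdot1$ converges in norm, hence strictly, to $\psi(b^*b)\cdot1\in M(A)$. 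Thus $b^*b\otimes1\in\overline{\mathfrak M}_{\psi\otimes\operatorname{id}}^+$, i.e. $b\otimes1\in\overline{\mathfrak N}_{\psi\otimes\operatorname{id}}$, and consequently both $\Delta(a)\in\overline{\mathfrak N}_{\psi\otimes\operatorname{id}}$ and $x(b\otimes1)\in\overline{\mathfrak N}_{\psi\otimes\operatorname{id}}$.

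Now apply Proposition~\ref{cauchyschwarz} with the pair $y:=\Delta(a)$ and the element $x(b\otimes1)$ in the roles of the two elements of $\overline{\mathfrak N}_{\psi\otimes\operatorname{id}}$ (so the product $y^*\bigl(x(b\otimes1)\bigr)=\Delta(a^*)x(b\otimes1)$ appears, and it lies in $\overline{\mathfrak M}_{\psi\otimes\operatorname{id}}$). This gives
\begin{align}
&\bigl((\psi\otimes\operatorname{id})(\Delta(a^*)x(b\otimes1))\bigr)^*\bigl((\psi\otimes\operatorname{id})(\Delta(a^*)x(b\otimes1))\bigr)   \notag \\
&\le\bigl\|(\psi\otimes\operatorname{id})(\Delta(a)^*\Delta(a))\bigr\|\,(\psi\otimes\operatorname{id})\bigl((b^*\otimes1)x^*x(b\otimes1)\bigr).  \notag
\end{align}
Finally, use that $\Delta$ is a ${}^*$-homomorphism, so $\Delta(a)^*\Delta(a)=\Delta(a^*a)$, to rewrite the norm factor as $\bigl\|(\psi\otimes\operatorname{id})(\Delta(a^*a))\bigr\|$, which is exactly the claimed inequality. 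I expect the only genuine subtlety to be the bookkeeping about which multiplications are permitted in the left ideal $\overline{\mathfrak N}_{\psi\otimes\operatorname{id}}$ — in particular making sure that $b\otimes1$ (not merely $1\otimes b$) belongs to it, which is why the strict-convergence check on the net $\bigl(\theta(b^*b)\cdot1\bigr)_\theta$ is the one nonroutine point; everything else is a formal substitution into Proposition~\ref{cauchyschwarz} together with the ${}^*$-homomorphism property of $\Delta$.
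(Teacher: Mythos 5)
Your proof is correct and follows essentially the same route as the paper: right invariance gives $\Delta a\in\overline{\mathfrak N}_{\psi\otimes\operatorname{id}}$, the left-ideal property gives $x(b\otimes1)\in\overline{\mathfrak N}_{\psi\otimes\operatorname{id}}$, and then Proposition~\ref{cauchyschwarz} applied to the pair $\Delta(a)$, $x(b\otimes1)$ yields the inequality. Your explicit verification that $b\otimes1\in\overline{\mathfrak N}_{\psi\otimes\operatorname{id}}$ via the strict convergence of the net $\bigl(\theta(b^*b)\cdot1\bigr)_{\theta\in{\mathcal G}_\psi}$ is a detail the paper leaves implicit, and it is a worthwhile addition.
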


\begin{proof}
We know $\Delta a\in\overline{\mathfrak N}_{\psi\otimes\operatorname{id}}$, from the 
right invariance of $\psi$.  Meanwhile, since $\overline{\mathfrak N}_{\psi}$ is a 
left ideal, we have $x(b\otimes1)\in\overline{\mathfrak N}_{\psi\otimes\operatorname{id}}$. 
So $\Delta(a^*)x(b\otimes1)=(\Delta a)^*\,[x(b\otimes1)]\in\overline{\mathfrak M}_{\psi\otimes
\operatorname{id}}$.  Now apply the result of the generalized Cauchy--Schwarz inequality 
given in Proposition~\ref{cauchyschwarz}.
\end{proof}

Of course, analogous results as in Propositions~\ref{idomegadelta}, \ref{prop_omegabar}, \ref{delta_cs} 
will hold for the left invariant weight $\varphi$.  All these results will be useful later.  Meanwhile, 
the next result involves both weights $\psi$ (right invariant) and $\varphi$ (left invariant).

\begin{prop}
Let $a,b\in{\mathfrak N}_{\psi}$ and $c\in{\mathfrak N}_{\varphi}$.  Then
\begin{itemize}
  \item $\Delta(a^*c)(b\otimes1)=\Delta(a^*)(\Delta c)(b\otimes1)
\in\overline{\mathfrak M}_{\psi\otimes\operatorname{id}}$;
  \item $(\psi\otimes\operatorname{id})\bigl(\Delta(a^*c)(b\otimes1)\bigr)
\in\overline{\mathfrak N}_{\varphi}$.
\end{itemize}
\end{prop}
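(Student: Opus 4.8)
The plan is to derive both bullets together, obtaining the first as a byproduct of what the second needs, by combining the generalized Cauchy--Schwarz inequality of Proposition~\ref{delta_cs} with a Fubini argument. Since $\Delta$ is a ${}^*$-homomorphism, $\Delta(a^*c)(b\otimes1)=\Delta(a^*)\,x\,(b\otimes1)$ with $x:=\Delta(c)\in M(A\otimes A)$ and $\Delta(c)^*\Delta(c)=\Delta(c^*c)$; since $b\in{\mathfrak N}_\psi\subseteq\overline{\mathfrak N}_\psi$, Proposition~\ref{delta_cs} applies and gives at once the membership $\Delta(a^*c)(b\otimes1)\in\overline{\mathfrak M}_{\psi\otimes\operatorname{id}}$ (the first bullet) together with the inequality $X^*X\le\lambda Y$, where $X:=(\psi\otimes\operatorname{id})\bigl(\Delta(a^*c)(b\otimes1)\bigr)$, $Y:=(\psi\otimes\operatorname{id})\bigl((b^*\otimes1)\Delta(c^*c)(b\otimes1)\bigr)$, and $\lambda:=\bigl\|(\psi\otimes\operatorname{id})(\Delta(a^*a))\bigr\|<\infty$ (finite because $a^*a\in{\mathfrak M}_\psi^+$, so $(\psi\otimes\operatorname{id})(\Delta(a^*a))\in M(B)$ by right invariance of $\psi$). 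Since $\overline{\mathfrak M}_\varphi^+$ is hereditary, the second bullet ($X\in\overline{\mathfrak N}_\varphi$) follows once we know $Y\in\overline{\mathfrak M}_\varphi^+$; so the whole statement reduces to proving that.

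To this end, set $z:=\Delta(c)(b\otimes1)$, so that $Y=(\psi\otimes\operatorname{id})(z^*z)$ with $z^*z=(b^*\otimes1)\Delta(c^*c)(b\otimes1)$, and Proposition~\ref{delta_cs} already tells us $z^*z\in\overline{\mathfrak M}_{\psi\otimes\operatorname{id}}^+$. The plan is to show \emph{also} that $z^*z\in\overline{\mathfrak M}_{\operatorname{id}\otimes\varphi}^+$ and to identify the iterated slice. Since $c\in{\mathfrak N}_\varphi$, Proposition~\ref{invarianceprop} gives $\Delta c\in\overline{\mathfrak N}_{\operatorname{id}\otimes\varphi}$; a routine strict-limit argument, built on the slice identity $(\operatorname{id}\otimes\theta)\bigl((d^*\otimes1)w(d\otimes1)\bigr)=d^*(\operatorname{id}\otimes\theta)(w)\,d$, shows that $\overline{\mathfrak N}_{\operatorname{id}\otimes\varphi}$ is stable under right multiplication by $M(A)\otimes1$, whence $z\in\overline{\mathfrak N}_{\operatorname{id}\otimes\varphi}$ and $z^*z\in\overline{\mathfrak M}_{\operatorname{id}\otimes\varphi}^+$. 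Passing the same slice identity to the limit identifies $(\operatorname{id}\otimes\varphi)(z^*z)=b^*Zb$ with $Z:=(\operatorname{id}\otimes\varphi)(\Delta(c^*c))$, and $Z\in M(C)^+$ by left invariance of $\varphi$ (valid since $c^*c\in{\mathfrak M}_\varphi^+$). As $Z$ is bounded, $b^*Zb\le\|Z\|\,b^*b$ with $b^*b\in{\mathfrak M}_\psi^+$, so $b^*Zb\in\overline{\mathfrak M}_\psi^+$ and $\psi(b^*Zb)\le\|Z\|\,\psi(b^*b)<\infty$.

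Finally, because both $\psi$ and $\varphi$ are KMS, $\psi\otimes\varphi$ is a proper (indeed KMS) weight on $A\otimes A$, so the Fubini identities recalled in Section~1 are available. Applying Fubini to $z^*z\in\overline{\mathfrak M}_{\operatorname{id}\otimes\varphi}^+$ gives $(\psi\otimes\varphi)(z^*z)=\psi\bigl((\operatorname{id}\otimes\varphi)(z^*z)\bigr)=\psi(b^*Zb)<\infty$, and applying it to $z^*z\in\overline{\mathfrak M}_{\psi\otimes\operatorname{id}}^+$ gives $\varphi(Y)=\varphi\bigl((\psi\otimes\operatorname{id})(z^*z)\bigr)=(\psi\otimes\varphi)(z^*z)<\infty$, so $Y\in\overline{\mathfrak M}_\varphi^+$ and, as noted in the first paragraph, $X^*X\in\overline{\mathfrak M}_\varphi^+$, i.e.\ $X\in\overline{\mathfrak N}_\varphi$. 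The step I expect to require the most care is the double-domain bookkeeping of the middle paragraph: the closing Fubini chain works only because $z^*z$ lies simultaneously in $\overline{\mathfrak M}_{\psi\otimes\operatorname{id}}^+$ \emph{and} in $\overline{\mathfrak M}_{\operatorname{id}\otimes\varphi}^+$, and because the iterated slice $(\operatorname{id}\otimes\varphi)(z^*z)$ is genuinely $b^*Zb$; pinning down exactly which slicing statements from Section~1 license these two facts (including the right-module stability of $\overline{\mathfrak N}_{\operatorname{id}\otimes\varphi}$) is where the real work lies, everything else being a direct appeal to results already established.
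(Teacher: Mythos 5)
Your proposal is correct and is essentially the paper's own argument: both proofs get the first bullet and the domination of your $X^*X$ by $\bigl\|(\psi\otimes\operatorname{id})(\Delta(a^*a))\bigr\|\,(\psi\otimes\operatorname{id})\bigl((b^*\otimes1)\Delta(c^*c)(b\otimes1)\bigr)$ straight from Proposition~\ref{delta_cs}, and then show this dominating element has $\varphi$-value at most $\bigl\|(\operatorname{id}\otimes\varphi)(\Delta(c^*c))\bigr\|\,\psi(b^*b)<\infty$. The only divergence is how the interchange of $\varphi$ with $\psi\otimes\operatorname{id}$ is licensed: the paper applies $\omega\in{\mathcal G}_{\varphi}$ to both sides of the inequality (using Proposition~\ref{slice_lem1} and the module identity, which is trivial for the bounded functional $\omega$) and then passes to the limit by lower semicontinuity, whereas you first establish that $z^*z$ lies in $\overline{\mathfrak M}_{\operatorname{id}\otimes\varphi}^+$ as well and invoke the Fubini identity for the tensor product weight $\psi\otimes\varphi$ --- a legitimate, slightly more structural packaging of the same limiting argument.
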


\begin{proof}
First statement is an immediate consequence of Proposition~\ref{delta_cs}.  The proposition 
also says
\begin{align}
&(\psi\otimes\operatorname{id})\bigl(\Delta(a^*c)(b\otimes1)\bigr)^*
\,(\psi\otimes\operatorname{id})\bigl(\Delta(a^*c)(b\otimes1)\bigr)  \notag \\
&\le\bigl\|(\psi\otimes\operatorname{id})(\Delta(a^*a))\bigr\|
\,(\psi\otimes\operatorname{id})\bigl((b^*\otimes1)\Delta(c^*c)(b\otimes1)\bigr).
\notag
\end{align}
Apply here $\omega\in{\mathcal G}_{\varphi}$.  Then it becomes:
\begin{align}
&\omega\bigl((\psi\otimes\operatorname{id})(\Delta(a^*c)(b\otimes1))^*
\,(\psi\otimes\operatorname{id})(\Delta(a^*c)(b\otimes1))\bigr)  \notag \\
&\le\bigl\|(\psi\otimes\operatorname{id})(\Delta(a^*a))\bigr\|
\,\psi\bigl(b^*(\operatorname{id}\otimes\omega)(\Delta(c^*c))b\bigr).
\notag
\end{align}
This is true for any $\omega\in{\mathcal G}_{\varphi}$.  Meanwhile, by the left invariance 
of $\varphi$, we know $\Delta(c^*c)\in\overline{\mathfrak M}_{\operatorname{id}\otimes\varphi}$. 
So by the lower semi-continuity of $\varphi$, the inequality will still hold for $\varphi$ 
in place of $\omega$.  Then we have:
\begin{align}
&\bigl|\varphi\bigl((\psi\otimes\operatorname{id})(\Delta(a^*c)(b\otimes1))^*
\,(\psi\otimes\operatorname{id})(\Delta(a^*c)(b\otimes1))\bigr)\bigr|  \notag \\
&\le\bigl\|(\psi\otimes\operatorname{id})(\Delta(a^*a))\bigr\|
\,\bigl\|(\operatorname{id}\otimes\varphi)(\Delta(c^*c))\bigr\|
\,\bigl|\psi(b^*b)\bigr|\,<\infty.
\notag
\end{align}
This shows that $(\psi\otimes\operatorname{id})\bigl(\Delta(a^*c)(b\otimes1)\bigr)
\in\overline{\mathfrak N}_{\varphi}$.  To be more precise, we have:
\begin{align}
&\bigl\|\Lambda_{\varphi}\bigl((\psi\otimes\operatorname{id})(\Delta(a^*c)
(b\otimes1))\bigr)\bigr\|_{{\mathcal H}_{\varphi}}   \notag \\
&\le\bigl\|(\psi\otimes\operatorname{id})(\Delta(a^*a))\bigr\|^{\frac12}
\,\bigl\|(\operatorname{id}\otimes\varphi)(\Delta(c^*c))\bigr\|^{\frac12}
\,\bigl\|\Lambda_{\psi}(b)\bigr\|_{{\mathcal H}_{\psi}}.
\notag
\end{align}
\end{proof}

Observe that the results gathered so far in this subsection follow directly from the 
left/right invariance of $\varphi$, $\psi$, and we do not need to require the existence 
of the canonical idempotent $E$.

\subsection{Definition of a locally compact quantum groupoid}

We are trying to build our case that the data $(A,\Delta,E,B,\nu,\varphi,\psi)$ determines 
a valid locally compact quantum groupoid.  The complete justification will have to 
wait until Part~II, but we are now ready to state the full definition.

\begin{defn}\label{definitionlcqgroupoid}
The data $(A,\Delta,E,B,\nu,\varphi,\psi)$  defines a {\em locally compact quantum 
groupoid\/}, if
\begin{itemize}
  \item $A$ is a $C^*$-algebra;
  \item $\Delta:A\to M(A\otimes A)$ is a comultiplication on $A$, in the sense of 
Definition~\ref{comultiplication};
  \item $B$ is a non-degenerate $C^*$-subalgebra of $M(A)$;
  \item $\nu$ is a KMS weight on $B$;
  \item $E$ is the canonical idempotent of $(A,\Delta)$ as in 
Definition~\ref{canonicalidempotent}.  Namely,
  \begin{enumerate}
    \item $\Delta(A)(A\otimes A)$ is norm-dense in $E(A\otimes A)$ and $(A\otimes A)\Delta(A)$ 
is norm-dense in $(A\otimes A)E$;
    \item there exists a $C^*$-algebra $C\cong B^{\operatorname{op}}$ with 
a ${}^*$-anti-isomorphism $R=R_{BC}:B\to C$ so that $E\in M(B\otimes C)$, and the triple 
$(E,B,\nu)$ forms a separability triple in the sense of Definition~\ref{separabilitytriple};
    \item $E\otimes1$ and $1\otimes E$ commute, and we have:
$$
(\operatorname{id}\otimes\Delta)(E)=(E\otimes1)(1\otimes E)=(1\otimes E)(E\otimes1)
=(\Delta\otimes\operatorname{id})(E).
$$
  \end{enumerate}
  \item $\varphi$ is a KMS weight, and is left invariant (Definition~\ref{leftrightinvariance});
  \item $\psi$ is a KMS weight, and is right invariant (Definition~\ref{leftrightinvariance});
  \item There exists a one-parameter group of automorphisms 
$(\theta_t)_{t\in\mathbb{R}}$ of $B$ such that $\nu\circ\theta_t=\nu$ and that 
$\sigma^{\varphi}_t|_B=\theta_t$, $\forall t\in\mathbb{R}$.
\end{itemize}
\end{defn}

\begin{rem}
If $B=C=\mathbb{C}$, then necessarily $E=1\otimes1$ and $\Delta$ becomes 
a non-degenerate ${}^*$-homomorphism.  The left invariance condition will become 
$(\operatorname{id}\otimes\varphi)(\Delta a)\in\mathbb{C}$, $a\in{\mathfrak M}_{\varphi}$. 
In this case, with an additional normalization requirement that 
$(\operatorname{id}\otimes\varphi)(\Delta a)=\varphi(a)1$, and similarly 
for $\psi$, we will obtain the definition of a locally compact quantum group 
given by Kustermans and Vaes \cite{KuVa}, together with the uniqueness 
(up to scalar multiplication) of the weights $\varphi$ and $\psi$.  In general, 
however, when the base $C^*$-algebra $B$ is non-trivial, we would no longer have 
the uniqueness result for the Haar weights.
\end{rem}

In the upcoming paper (Part~II), we show that the data 
$(A,\Delta,E,B,\nu,\varphi,\psi)$ indeed allow us to construct the antipode map $S$. 
This would justify that we have a valid framework for locally compact quantum groupoids.

On the other hand, the existence requirement of the (bounded) canonical idempotent 
element $E$ means that the definition is not fully general.  The existence of $E$ is really 
a condition imposed on the pair $(B,\nu)$.  See \cite{BJKVD_SepId}.  If we are to overcome 
this restriction and develop a theory without requiring the existence of $E$, we will have to 
consider the comultiplication as a map from $A$ into a kind of a ``balanced tensor product'' 
$A\ast A$ (involving the base algebra $B$), whose definition is not yet clear in the $C^*$-algebra 
framework.  In the purely algebraic framework of {\em multiplier Hopf algebroids\/} \cite{TimVD}, 
the ``Takeuchi product'' is used, while in the von Neumann algebraic setting of {\em measured 
quantum groupoids\/} \cite{LesSMF}, \cite{EnSMF}, the ``fiber product'' (over a relative tensor 
product of Hilbert spaces) is used, which does not have a $C^*$-algebraic analog.  See 
discussion given in Introduction.

Meanwhile, the last condition, saying that the the restriction to $B$ of the automorphism 
group $(\sigma^{\varphi}_t)$ is invariant under $\nu$, is needed as one wishes to prove later 
the ``quasi-invariance'' of $\nu$.  Namely, we aim to prove as a consequence of the axioms that 
the automorphism groups $(\sigma^{\varphi}_t)$ and $(\sigma^{\psi}_t)$ commute, which is 
typically required already in the ordinary locally compact groupoid theory.  This condition 
cannot be really removed, since it cannot be proved using the other axioms.  A similar 
requirement appeared in Lesieur's definition of a measured quantum groupoid \cite{LesSMF}. 
More discussion on this condition will be given later, in Part~II \cite{BJKVD_qgroupoid2}.

Let $(A,\Delta,E,B,\nu,\varphi,\psi)$ be a locally compact quantum groupoid, as in 
Definition~\ref{definitionlcqgroupoid}.  The next proposition shows the relationships between 
the weights $\psi$, $\varphi$ with the expressions $(\psi\otimes\operatorname{id})(\Delta x)$ 
and $(\operatorname{id}\otimes\varphi)(\Delta x)$, which are in fact operator-valued weights. 
These results are analogous to Equation~\eqref{(groupoid_leftHaar)} in the case 
of ordinary groupoids, further strengthening the case for our choice of the right/left 
invariance conditions.

\begin{prop}\label{nuphipsi}
We have:
\begin{itemize}
  \item $\nu\bigl((\psi\otimes\operatorname{id})(\Delta x)\bigr)=\psi(x)$, 
for $x\in{\mathfrak M}_{\psi}$.
  \item $\mu\bigl((\operatorname{id}\otimes\varphi)(\Delta x)\bigr)=\varphi(x)$, 
for $x\in{\mathfrak M}_{\varphi}$.
\end{itemize}
\end{prop}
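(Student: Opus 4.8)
The plan is to recognize $x \mapsto (\psi\otimes\operatorname{id})(\Delta x)$ as an operator-valued weight taking values in $M(B)$, and then to apply the weight $\nu$ to it and identify the composite as $\psi$. I would start from the right invariance of $\psi$: for $x \in {\mathfrak M}_\psi^+$ we have $\Delta x \in \overline{\mathfrak M}_{\psi\otimes\operatorname{id}}^+$ and $(\psi\otimes\operatorname{id})(\Delta x) \in M(B)^+$. The key bridge is the Fubini-type identity recorded in Section~1.5, namely $\varphi'\bigl((\psi\otimes\operatorname{id})(x)\bigr) = (\psi\otimes\varphi')(x)$ for $x \in {\mathfrak M}_{\psi\otimes\operatorname{id}}^+$, applied here with the weight on the second leg being $\nu$ (extended to $M(B) \subseteq M(A)$, using that $B$ is a non-degenerate $C^*$-subalgebra). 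Thus, at least formally, $\nu\bigl((\psi\otimes\operatorname{id})(\Delta x)\bigr) = (\psi\otimes\nu)(\Delta x)$, so the content of the proposition is that $(\psi\otimes\nu)(\Delta x) = \psi(x)$ — a left-invariance-type statement for the base weight $\nu$ against the right-invariant $\psi$.

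To actually pin down $(\psi\otimes\nu)(\Delta x) = \psi(x)$, I would exploit the structural identity $\Delta b = E(1\otimes b) = (1\otimes b)E$ for $b \in B$ (Proposition~\ref{Delta_on_BandC} and its Corollary), together with the separability triple conditions $(\nu\otimes\operatorname{id})(E)=1$ and $(\operatorname{id}\otimes\mu)(E)=1$. The natural route is: take $a \in {\mathfrak N}_\psi$ and something of the form $x = a^* z a$ or, better, work with elements $x$ of the form $b_1^* x_0 b_2$ to reduce to manageable pieces; but the cleanest path is probably to use the relation between $\psi$ and the operator-valued weight directly. Specifically, for $x \in {\mathfrak M}_\psi$, apply $\nu\otimes\operatorname{id}$ is not what we want — rather, we should slice the first leg with $\psi$ to land in $M(B)$, then slice with $\nu$. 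Here the identity $(\nu\otimes\operatorname{id})(E) = 1$ should let us ``collapse'' the $E$ that governs the range of $\Delta$: since $\Delta x = E(\Delta x)E$ lives in $M(B\otimes C)$-twisted pieces, and since $(\psi\otimes\operatorname{id})(\Delta x) \in M(B)$ by right invariance, applying $\nu$ and using that $\nu$ is ``adapted'' to $E$ via the separability conditions should give back $\psi(x)$.

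Concretely, I expect the argument to run through the dense set of $\omega \in {\mathcal G}_\varphi$ or through the net ${\mathcal G}_\psi$ to handle unboundedness, using lower semicontinuity of $\nu$ and $\psi$: first establish the identity on a suitable dense cone (e.g. using elements $x = (\operatorname{id}\otimes\omega)(\Delta c)$ or using $x \in {\mathfrak N}_\psi^*{\mathfrak N}_\psi$ with the Cauchy–Schwarz bounds of Proposition~\ref{prop_omegabar} and Proposition~\ref{delta_cs}), and then pass to the supremum. The second bullet, $\mu\bigl((\operatorname{id}\otimes\varphi)(\Delta x)\bigr) = \varphi(x)$, is entirely symmetric: it uses left invariance of $\varphi$ (so $(\operatorname{id}\otimes\varphi)(\Delta x) \in M(C)$), the identity $(\operatorname{id}\otimes\mu)(E) = 1$ from Proposition~\ref{gamma'map}, the behavior $\Delta c = (c\otimes 1)E = E(c\otimes 1)$ for $c \in C$, and $\mu = \nu \circ R^{-1}$. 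I would prove the first bullet carefully and then state that the second follows by the same argument with the roles of the two legs, of $B$ and $C$, of $\nu$ and $\mu$, and of $\varphi$ and $\psi$ interchanged.

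The main obstacle I anticipate is not the formal Fubini manipulation but justifying the interchange of $\nu$ (an unbounded weight on $B \subseteq M(A)$) with the slice map $\psi\otimes\operatorname{id}$ when $(\psi\otimes\operatorname{id})(\Delta x)$ is only known to be a positive element of $M(B)$, not of $B$ itself — one must check $(\psi\otimes\operatorname{id})(\Delta x) \in \overline{\mathfrak M}_\nu$ and that $\nu$ of it agrees with $(\psi\otimes\nu)(\Delta x)$, which requires a version of the Fubini result at the multiplier level and careful use of the nets ${\mathcal G}_\nu$, ${\mathcal G}_\psi$ together with lower semicontinuity. The separability identities $(\nu\otimes\operatorname{id})(E)=1$ and $(\operatorname{id}\otimes\mu)(E)=1$ are exactly what is needed to make the finiteness work out: they say $\nu$, $\mu$ see $E$ as ``a unit'' in the appropriate slices, so applying them does not introduce the divergences one might fear when $E \neq 1\otimes 1$.
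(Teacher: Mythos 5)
There is a genuine gap here. Your plan reduces the statement to ``$(\psi\otimes\nu)(\Delta x)=\psi(x)$'' via the Fubini identity and then asserts that the separability conditions $(\nu\otimes\operatorname{id})(E)=1$, $(\operatorname{id}\otimes\mu)(E)=1$ ``should give back $\psi(x)$,'' but you never supply the mechanism by which $E$ actually enters the computation relative to $\Delta x$. The paper's proof hinges on one specific idea that is absent from your proposal: the element $(\psi\otimes\operatorname{id})(\Delta a)$ lies in $M(B)$ by right invariance, so one may apply $\Delta$ to it and compute the result in two ways --- once using coassociativity, giving $(\psi\otimes\operatorname{id}\otimes\operatorname{id})\bigl((\Delta\otimes\operatorname{id})(\Delta a)\bigr)$, and once using the formula $\Delta y=E(1\otimes y)$ for $y\in M(B)$ (Corollary to Proposition~\ref{Delta_on_BandC}), giving $(\psi\otimes\operatorname{id}\otimes\operatorname{id})\bigl((1\otimes E)\Delta_{13}(a)\bigr)$. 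Equating these two expressions, cutting down by $1\otimes\tilde{y}c$ with $c\in{\mathcal D}(\gamma_C)$, slicing with $\operatorname{id}\otimes\omega$, and then applying $\nu$ to the middle leg via the identity $(\nu\otimes\operatorname{id})\bigl((1\otimes c)E\bigr)=c$ is what collapses the expression to $\psi(x)$ for a dense set of $x$. Without this two-sided evaluation of $\Delta\bigl((\psi\otimes\operatorname{id})(\Delta a)\bigr)$, there is no route from $\nu\circ(\psi\otimes\operatorname{id})\circ\Delta$ to $\psi$; the separability identities alone do not connect them. Note also that the precise identity needed is $(\nu\otimes\operatorname{id})\bigl((1\otimes c)E\bigr)=c$ for $c\in{\mathcal D}(\gamma_C)$, not merely $(\nu\otimes\operatorname{id})(E)=1$, since one must first cut down by an element of $C$ to land in ${\mathfrak M}_{\nu\otimes\operatorname{id}}$.

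A secondary problem is that your Fubini reduction is not well-posed as stated: $\nu$ is a weight on $B$, not on $A$, so $(\psi\otimes\nu)(\Delta x)$ is not a tensor-product weight evaluation in the sense of Section~1, and the Fubini result $\varphi'\bigl((\psi\otimes\operatorname{id})(x)\bigr)=(\psi\otimes\varphi')(x)$ quoted there requires the second weight to live on the second-leg algebra. You correctly anticipate that interchanging $\nu$ with the slice map is delicate, but the paper avoids this entirely by working with the concrete dense family $x=(\operatorname{id}\otimes\omega)\bigl[(1\otimes\tilde{y}c)(\Delta a)\bigr]$, for which the finiteness of $\nu$ on $(\psi\otimes\operatorname{id})(\Delta x)$ is read off directly from the identity above rather than established abstractly.
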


\begin{proof}
Let $a\in{\mathfrak M}_{\psi}$.  Then by the right invariance of $\psi$, we know 
$\Delta a\in\overline{\mathfrak M}_{\psi\otimes\operatorname{id}}$ and 
$(\psi\otimes\operatorname{id})(\Delta a)\in M(B)$.  Apply here $\Delta$. 
On the one hand, we have:
$$
\Delta\bigl((\psi\otimes\operatorname{id})(\Delta a)\bigr)
=(\psi\otimes\operatorname{id}\otimes\operatorname{id})\bigl((\operatorname{id}
\otimes\Delta)(\Delta a)\bigr)
=(\psi\otimes\operatorname{id}\otimes\operatorname{id})\bigl((\Delta
\otimes\operatorname{id})(\Delta a)\bigr),
$$
where we used the coassociativity of $\Delta$.  Meanwhile, by Corollary to 
Proposition~\ref{Delta_on_BandC}, we have: 
$$
\Delta\bigl((\psi\otimes\operatorname{id})(\Delta a)\bigr)
=E\bigl(1\otimes(\psi\otimes\operatorname{id})(\Delta a))
=(\psi\otimes\operatorname{id}\otimes\operatorname{id})
\bigl((1\otimes E)\Delta_{13}(a)\bigr).
$$
It thus follows that 
\begin{equation}\label{(nuphipsi)}
(\psi\otimes\operatorname{id}\otimes\operatorname{id})\bigl((\Delta
\otimes\operatorname{id})(\Delta a)\bigr)
=(\psi\otimes\operatorname{id}\otimes\operatorname{id})
\bigl((1\otimes E)\Delta_{13}(a)\bigr).
\end{equation}

Let $y=\tilde{y}c$, where $\tilde{y}\in A$ and $c\in D(\gamma_C)$, which is 
dense in $C$.  Since $C$ is a non-degenerate $C^*$-subalgebra of $M(A)$, 
such elements form a dense subset in $A$.  Multiply $1\otimes y=1\otimes
\tilde{y}c$ to both sides of Equation~\eqref{(nuphipsi)}, from left.  Then 
the equation becomes: 
$$
(\psi\otimes\operatorname{id}\otimes\operatorname{id})
\bigl((\Delta\otimes\operatorname{id})((1\otimes\tilde{y}c)(\Delta a))\bigr)
=(\psi\otimes\operatorname{id}\otimes\operatorname{id})
\bigl((1\otimes1\otimes\tilde{y}c)(1\otimes E)\Delta_{13}(a)\bigr).
$$
Let $\omega\in A^*$, and apply $\operatorname{id}\otimes\omega$ to the 
equation above.  Then it becomes: 
\begin{align}
&(\psi\otimes\operatorname{id})\bigl(\Delta((\operatorname{id}\otimes\omega)
[(1\otimes\tilde{y}c)(\Delta a)])\bigr)   \notag \\
&=(\psi\otimes\operatorname{id})\bigl((\operatorname{id}\otimes\operatorname{id}
\otimes\omega)[(1\otimes1\otimes\tilde{y})(1\otimes1\otimes c)(1\otimes E)
\Delta_{13}(a)]\bigr).
\notag
\end{align}
Since $c\in D(\gamma_C)$, we have $(1\otimes c)E\in{\mathfrak M}_{\nu\otimes
\operatorname{id}}$, and $(\nu\otimes\operatorname{id})\bigl((1\otimes c)E\bigr)
=c$.  In particular, we can see that the right side of the above equation is 
actually contained in ${\mathfrak M}_{\nu}$.  So let us apply $\nu$ to both sides 
of the equation.  Then: 
\begin{align}
&\nu\bigl((\psi\otimes\operatorname{id})(\Delta((\operatorname{id}\otimes\omega)
[(1\otimes\tilde{y}c)(\Delta a)]))\bigr)   \notag \\
&=\psi\bigl((\operatorname{id}\otimes\omega)[(1\otimes\tilde{y})(1\otimes
(\nu\otimes\operatorname{id})((1\otimes c)E))(\Delta a)]\bigr)   \notag \\
&=\psi\bigl((\operatorname{id}\otimes\omega)[(1\otimes\tilde{y}c)(\Delta a)]\bigr).
\notag
\end{align}
For convenience, write: $x=(\operatorname{id}\otimes\omega)[(1\otimes\tilde{y}c)
(\Delta a)]$.  Then this result can be written as  $\nu\bigl((\psi\otimes
\operatorname{id})(\Delta x)\bigr)=\psi(x)$.

Note that by Lemma~\ref{LemmaDeltafull}, the elements of the form 
$(\operatorname{id}\otimes\omega)[(1\otimes\tilde{y}c)(\Delta a)]$ are dense in $A$. 
What all this means is that the equation $\nu\bigl((\psi\otimes\operatorname{id})
(\Delta x)\bigr)=\psi(x)$ is true for any $x\in A$, as long as the expression 
makes sense.  In particular, if $x\in{\mathfrak M}_{\psi}$, making the right 
side to be valid, we would have $(\psi\otimes\operatorname{id})(\Delta x)
\in\overline{\mathfrak M}_{\nu}$ and the equation would hold.

Similarly, we can show that if $x\in{\mathfrak M}_{\varphi}$ then 
$(\operatorname{id}\otimes\varphi)(\Delta x)\in\overline{\mathfrak M}_{\mu}$, and 
that $\mu\bigl((\operatorname{id}\otimes\varphi)(\Delta x)\bigr)=\varphi(x)$.
\end{proof}

In Part~II, to justify that the definition of a locally compact quantum groupoid 
$(A,\Delta,E,B,\nu,\varphi,\psi)$ as given in Definition~\ref{definitionlcqgroupoid} 
is a valid one, we will carry out the constructions for left/right regular 
representations (in terms of certain ``multiplicative partial isometries'') and 
the antipode map (expressed using a polar decomposition).  The overall look is 
similar to the case of locally compact quantum groups \cite{KuVa}, \cite{KuVavN}, 
\cite{MNW}, \cite{VDvN}, which is to be expected.  But of course, the current 
case is more general, since $E\ne1\otimes1$.

\section{Examples}

At this stage, we have only stated the definition of our quantum groupoid, with 
the main theory and construction postponed to upcoming papers.  As this is the case, 
we do not plan to give any lengthy discussion on examples.  Instead, we will 
mention here only a few relatively straightforward ones.  Once we further develop 
the theory (in Part~II and Part~III), we plan to explore some more examples that 
fit our framework.

\subsection{Finite quantum groupoids}

The notion of {\em finite quantum groupoids\/} (or {\em weak Hopf $C^*$-algebras\/}) 
goes back to B{\" o}hm, Nill, Szlach{\' a}nyi.  See \cite{BNSwha1}, \cite{NVfqg}, \cite{Valfqg}. 
It is determined by the data $(A,\Delta,\varepsilon,S)$, where $A$ is a finite-dimensional 
$C^*$-algebra; $\Delta:A\to A\otimes A$ is the ``comultiplication'', which is a (not 
necessarily unital) ${}^*$-homomorphism satisfying the coassociativity condition; 
$\varepsilon:A\to\mathbb{C}$ is the ``counit''; and $S:A\to A$ is the ``antipode'', 
together with some number of axioms regarding the maps $\Delta$, $\varepsilon$, $S$.

Then it is known that there exists a unique faithful positive linear functional 
$\varphi:A\to\mathbb{C}$ (the ``normalized Haar measure'') such that $\varphi\circ S
=\varphi$ and $(\operatorname{id}\otimes\varphi)\bigl(\Delta(1)\bigr)=1$.  There 
also exist two counital maps $\varepsilon_s$ and $\varepsilon_t$, given by 
$$
\varepsilon_s(x):=(\operatorname{id}\otimes\varepsilon)\bigl((1\otimes x)\Delta(1)\bigr),
\quad {\text { and }}\quad
\varepsilon_t(x):=(\varepsilon\otimes\operatorname{id})\bigl(\Delta(1)(x\otimes1)\bigr).
$$
It is shown that $S\circ\varepsilon_t=\varepsilon_s\circ S$, and the images of these 
maps become subalgebras of $A$.  Namely, $B=\varepsilon_s(A)$ and $C=\varepsilon_t(A)$.

Write $E:=\Delta(1)$, which is a projection ($E\ne1\otimes1$ in general).  And let 
$\nu:=\varphi|_B$.  Then it is not difficult to show that $(A,\Delta,E,B,\nu,\varphi,\varphi)$ 
determines a locally compact quantum groupoid, satisfying Definition~\ref{definitionlcqgroupoid}.

Even if this is finite-dimensional, there are some interesting examples of weak Hopf 
$C^*$-algebras or finite quantum groupoids, including examples coming from depth-2 
subfactors.  See \cite{NVfqg} and \cite{Valfqg}.

\subsection{Locally compact quantum groups}

The resemblance of our definition to that of {\em locally compact quantum groups\/} in 
the sense of \cite{KuVa} has been noted already.  Indeed, if $(A,\Delta,\varphi,\psi)$ 
determines an arbitrary locally compact quantum group ($A$ does not have to be 
finite-dimensional or compact), then just take $B=\mathbb{C}$ and $E=1\otimes1$ 
to regard it as a locally compact quantum groupoid.  See Remark following 
Definition~\ref{definitionlcqgroupoid}.

This observation agrees well with the case of ordinary groups, which are special cases 
of groupoids (when the set of units contain only one element).  

\subsection{Linking quantum groupoids (De Commer)}

In \cite{DeComm_galois}, \cite{DeComm_LinkingQG}, De Commer considers the notion of 
a {\em linking quantum groupoid\/}, as he establishes some Morita-type correspondence 
relation between quantum groups.  Loosely speaking, he considers quantum groupoids 
with a finite-dimensional base algebra, with the quantum groups on the ``corners''. 
While the overall $C^*$-algebra is not finite-dimensional, the base algebra is, and 
it is possible to show that these examples fit well with our framework. 

More recently, Enock  gave a more general construction, in the setting of measured 
quantum groupoids \cite{En_Morita}.

\subsection{Partial compact quantum groups (De Commer, Timmermann)}

In \cite{DCTT_partialCQG}, De Commer and Timmermann introduced the notion of {\em partial 
compact quantum groups\/}.  In short, a typical partial compact quantum group looks like 
$A=\bigoplus_{k,l,m,n\in I}{}_m\!\!{^k\!}A_n^l$, a direct sum of the vector spaces ${}_m\!\!{^k\!}A_n^l$, 
with indices contained in a discrete set $I$. It would carry a partial bialgebra structure, with the 
partial multiplication $M:{}_m\!\!{^k\!}A^r_s\otimes{}_s\!\!{^r\!}A_n^l\to{}_m\!\!{^k\!}A_n^l$, and the 
partial comultiplications $\Delta_{r,s}:{}_m\!\!{^k\!}A_n^l\to{}_r\!\!{^k\!}A^l_s\otimes{}_m\!\!{^r\!}A_n^s$. 
The index set $I$ plays the role of the ``object set'', and one can also give suitable definitions for 
the ${}^*$-structure, the counit, the antipode, and the invariant integral, making this an algebraic 
quantum groupoid of a weak multiplier Hopf algebra type.

The $C^*$-algebraic partial compact quantum group was studied in \cite{DC_C*partialCQG}.  With 
$E=\Delta(1)$, this would provide an example of a quantum groupoid in our framework developed 
in this paper.  It turns out that there exists a unique left/right invariant weight $\varphi$ (see Theorem~2.11 
of \cite {DCTT_partialCQG}).  It is observed that $\varphi$ is obtained as a direct sum of the (bounded) 
states $\varphi_{km}$ on the subalgebras ${}_m\!\!{^k\!}A_m^k$, so there are less technical issues 
involved.  Dynamical quantum $SU(2)$ group is shown to fit into this framework.

\bigskip\bigskip


\begin{thebibliography}{10}

\bibitem{BNSwha1}
G.~B{\" o}hm, F.~Nill, and K.~Szlach{\' a}nyi, \emph{Weak {H}opf algebras {I}.
  {I}ntegral theory and {$C^*$}-structure}, J. Algebra \textbf{221} (1999),
  385--438.

\bibitem{BSwha2}
G.~B{\" o}hm and K.~Szlach{\' a}nyi, \emph{Weak {H}opf algebras {II}.
  {R}epresentation, dimensions, and {M}arkov trace}, J. Algebra \textbf{233}
  (2000), 156--212.

\bibitem{Brbook}
R.~Brown, \emph{{T}opology and {G}roupoids}, BookSurge Publishing, 2006.

\bibitem{Cm}
F.~Combes, \emph{Poids sur une {$C^*$}-alg{\`e}bre}, J. Math. Pures et Appl.
  \textbf{47} (1968), 57--100 (French).

\bibitem{DeComm_galois}
K.~{De Commer}, \emph{Galois coactions and cocycle twisting for locally compact
  quantum groups}, J. Operator Theory \textbf{66} (2011), 59--106.

\bibitem{DeComm_LinkingQG}
\bysame, \emph{On a correspondence between {$SU_q(2)$}, {$\tilde{E}_q(2)$}, and
  {$\widetilde{SU}_q(1,1)$}}, Commun. Math. Phys. \textbf{304} (2011),
  187--228.

\bibitem{DC_C*partialCQG}
\bysame, \emph{{$C^*$}-algebraic partial compact quantum groups}, J. Funct.
  Anal. \textbf{270} (2016), 3962--3995.

\bibitem{DCTT_partialCQG}
K.~{De Commer} and T.~Timmermann, \emph{Partial compact quantum groups}, J.
  Algebra \textbf{438} (2015), 283--324.

\bibitem{EnSMF}
M.~Enock, \emph{{M}easured {Q}uantum {G}roupoids in {A}ction}, M{\'e}moires de
  la SMF, no. 114, Soc. Math. Fr., 2008.

\bibitem{En_Morita}
\bysame, \emph{Morita equivalence of measured quantum groupoids. {A}pplication
  to deformation of measured quantum groupoids by 2-cocycles}, Operator
  algebras and quantum groups, vol.~98, Banach Center Publications, 2012,
  pp.~107--198.

\bibitem{Higbook}
P.J. Higgins, \emph{{N}otes on {C}ategories and {G}roupoids}, Van Nostrand
  Reinhold, 1971.

\bibitem{BJKVD_LSthm}
B.J. Kahng and A.~{Van Daele}, \emph{The {Larson--Sweedler} theorem for weak multiplier {H}opf
  algebras}, preprint (to appear in Communications in Algebra; accepted author
  version posted online; available as {\texttt{arXiv:1406.0299}}, at
  {\texttt{http://lanl.arXiv.org}}).

\bibitem{BJKVD_SepId}
\bysame, \emph{Separability idempotents in {$C^*$}-algebras}, preprint
  (accepted to appear in J. Noncommutative Geometry; available as
  {\texttt{arXiv:1609.04132}}, at {\texttt{http://lanl.arXiv.org}}).

\bibitem{BJKVD_qgroupoid2}
B.J. Kahng and A.~{Van Daele}, \emph{A class of {$C^*$-algebraic} locally
  compact quantum groupoids {Part II}: Main theory}, in preparation.

\bibitem{BJKVD_qgroupoid3}
\bysame, \emph{A class of {$C^*$-algebraic} locally compact quantum groupoids
  {Part III}: Duality}, in preparation.

\bibitem{KuKMS}
J.~Kustermans, \emph{{KMS}-weights on {$C^*$}-algebras}, preprint
  ({\texttt{funct-an/9704008}}).

\bibitem{KuVaweightC*}
J.~Kustermans and S.~Vaes, \emph{Weight theory for {$C^*$}-algebraic quantum
  groups}, preprint ({\texttt{arXiv:math/9901063}}).

\bibitem{KuVa}
\bysame, \emph{Locally compact quantum groups}, Ann. Scient. \'Ec. Norm. Sup.,
  $4^e$ s\'erie \textbf{t. 33} (2000), 837--934.

\bibitem{KuVavN}
\bysame, \emph{Locally compact quantum groups in the von {N}eumann algebraic
  setting}, Math. Scand. \textbf{92} (2003), no.~1, 68--92.

\bibitem{LesSMF}
F.~Lesieur, \emph{{M}easured {Q}uantum {G}roupoids}, M{\'e}moires de la SMF,
  no. 109, Soc. Math. Fr., 2007.

\bibitem{LuAlgebroid}
J.H. Lu, \emph{Hopf algebroids and quantum groupoids}, Internat. J. Math.
  \textbf{7} (1996), 47--70.

\bibitem{MalGQ}
G.~Maltsiniotis, \emph{Groupo{\"i}des quantiques}, C. R. Acad. Sci. Paris
  \textbf{314} (1992), 249--252.

\bibitem{MNW}
T.~Masuda, Y.~Nakagami, and S.~Woronowicz, \emph{A {$C^*$}-algebraic framework
  for quantum groups}, Internat. J. Math. \textbf{14} (2003), no.~9, 903--1001.

\bibitem{NVfqg}
D.~Nikshych and L.~Vainerman, \emph{Finite quantum groupoids and their
  applications}, New {D}irections in {H}opf {A}lgebras, vol.~43, MSRI
  Publications, 2002, pp.~211--262.

\bibitem{Patbook}
A.L.T. Paterson, \emph{{G}roupoids, {I}nverse {S}emigroups, and their
  {O}perator {A}lgebras}, Progress in Mathematics, no. 170, Birkh{\" a}user,
  1998.

\bibitem{Pedersenbook}
G.K. Pedersen, \emph{{$C^*$-algebras} and their {A}utomorphism {G}roups},
  Academic Press, 1979.

\bibitem{Renbook}
J.N. Renault, \emph{{A} {G}roupoid {A}pproach to {$C^*$}-algebras}, Lecture
  Notes in Mathematics, no. 793, Springer-Verlag, 1980.

\bibitem{Str}
S.~Str{\u a}til{\u a}, \emph{{M}odular {T}heory in {O}perator {A}lgebras},
  Abacus Press, 1981.

\bibitem{Tk2}
M.~Takesaki, \emph{{T}heory of {O}perator {A}lgebras {II}}, Encyclopaedia of
  Mathematical Sciences, vol. 125, Springer-Verlag, 2002.

\bibitem{TimVD}
T.~Timmermann and A.~{Van Daele}, \emph{Regular multiplier {H}opf algebroids.
  {B}asic theory and examples}, preprint ({\texttt{arXiv:1307.0769}}).

\bibitem{Valfqg}
J.M. Vallin, \emph{Groupo{\"i}des quantiques finis}, J. Algebra \textbf{239}
  (2001), 215--261 (French).

\bibitem{VD_multiplierHopf}
A.~{Van Daele}, \emph{Multiplier {H}opf algebras}, Trans. Amer. Math. Soc.
  \textbf{342} (1994), no.~2, 917--932.

\bibitem{VDvN}
\bysame, \emph{Locally compact quantum groups. {A} von {N}eumann algebra
  approach}, SIGMA \textbf{10} (2014), no.~082.

\bibitem{VDsepid}
\bysame, \emph{Separability idempotents and multiplier algebras},
  preprint (available at {\texttt{arXiv.org}}, as {\texttt{arXiv:1301.4398v1}} and {\texttt{arXiv:1301.4398v2}}).

\bibitem{VDWangwha0}
A.~{Van Daele} and S.~Wang, \emph{Weak multiplier {H}opf algebras. {P}reliminaries, motivation and
  basic examples}, Operator algebras and quantum groups, vol.~98, Banach Center
  Publications, 2012, pp.~367--415.

\bibitem{VDWangwha1}
\bysame, \emph{Weak multiplier {H}opf algebras. {T}he main theory}, J. Reine
  Angew. Math. (Crelles Journal) \textbf{705} (2015), 155--209.

\bibitem{VDWangwha3}
\bysame, \emph{Weak multiplier {H}opf algebras {III}.
  {I}ntegrals and duality}, preprint ({\texttt{arXiv:1701.04951}}).

\bibitem{XuQGDeformation}
P.~Xu, \emph{Quantum groupoids}, Commun. Math. Phys. \textbf{216} (2001),
  539--581.

\bibitem{Yagroupoid}
T.~Yamanouchi, \emph{Duality for generalized {K}ac algebras and a
  characterization of finite groupoid algebras}, J. Algebra \textbf{163}
  (1994), 9--50.

\end{thebibliography}


\providecommand{\bysame}{\leavevmode\hbox to3em{\hrulefill}\thinspace}
\providecommand{\MR}{\relax\ifhmode\unskip\space\fi MR }
\providecommand{\MRhref}[2]{%
  \href{http://www.ams.org/mathscinet-getitem?mr=#1}{#2}
}
\providecommand{\href}[2]{#2}

\end{document}